\newcommand*{\faktor}[2]{
  \raisebox{0.5\height}{\ensuremath{#1}}
  \mkern-5mu\diagup\mkern-4mu
  \raisebox{-0.5\height}{\ensuremath{#2}}
} 
\newcommand\restr[2]{{
\left.
\kern-
\nulldelimiterspace 
#1 
\right|_{#2} 
}}
\tikzset{>=stealth}
\def\colim{\qopname\relax m{colim}}
\newtheorem{theorem}{Theorem}[section]
\newtheorem{proposition}[theorem]{Proposition}
\newtheorem{corollary}[theorem]{Corollary}
\newtheorem{question}[theorem]{Question}
\theoremstyle{definition}
\newtheorem{remark}[theorem]{Remark}
\newtheorem{definition}[theorem]{Definition}
\newtheorem{example}[theorem]{Example}
\theoremstyle{definition}
\newenvironment{usethmcounterof}[1]{%
  \theorem}{\endtheorem\addtocounter{theorem}{-1}}
\begin{document}

\title{Positively closed $Sh(B)$-valued models}
\author{Kristóf Kanalas}

\date{}

\maketitle

\begin{abstract}
    We study positively closed and strongly positively closed topos-valued models of coherent theories. Positively closed is a global notion (it is defined in terms of all possible outgoing homomorphisms), while strongly positively closed is a local notion (it only concerns the definable sets inside the model). For $\mathbf{Set}$-valued models of coherent theories they coincide.
    
    We prove that if $\mathcal{E}=Sh(B,\tau _{coh})$ for a complete Boolean algebra, then positively closed but not strongly positively closed $\mathcal{E}$-valued models of coherent theories exist, yet, there is an alternative local property which characterizes positively closed $\mathcal{E}$-valued models.

    A large part of our discussion is given in the context of infinite quantifier geometric logic, dealing with the fragment $L^g_{\kappa \kappa }$ where $\kappa $ is weakly compact.
    \end{abstract}


\vspace{5mm}

\tableofcontents

\section{Introduction}

Models of coherent theories internal to Grothendieck toposes capture a wide range of classical mathematical concepts. For example, if $T$ is the theory of local rings and $X$ is a topological space, then a $T$-model internal to $Sh(X)$ is a locally ringed space with undelying space $X$. If $T$ is the theory of fields, then a $T$-model internal to the presheaf topos $\mathbf{Set}^{\circlearrowleft}$ (where $\circlearrowleft $ is a one-object category with a free endomorphism) is a difference field, see \cite{TOMASIC2022108328}. Finally, if $T$ is any theory and $B$ is a complete Boolean algebra, then a $T$-model in $Sh(B,\tau _{can})$ (that is: sheaves wrt.~the canonical topology, i.e.~covers are arbitrary unions) is the same as a $B$-valued model, see \cite[Chapter 4]{makkai}. These examples motivate the development of topos-valued positive model theory.

A key concept in positive model theory is that of a positively closed model; a model such that homomorphisms out of it are not just preserving but also reflecting the validity of positive existential formulas, see \cite{poizatyaacov}. This is a global notion, as it is defined in terms of all outgoing homomorphisms. In the $\mathbf{Set}$-valued case there is an equivalent local notion, called ``strongly positively closed'', defined in terms of the definable sets of our model. The goal of this paper is to prove that if $\mathcal{E}=Sh(X)$ for an extremally disconnected Stone space (equivalently: $\mathcal{E}=Sh(B,\tau _{coh})$ for a complete Boolean algebra, where $\tau _{coh}$ is the finite union topology), then for $\mathcal{E}$-valued models of coherent theories these two notions do not coincide anymore, however, there is an alternative local property which characterizes positively closed models. These results are new even for the coherent fragment $L_{\omega \omega }^g$, but we prove them for $L_{\kappa \kappa }^g$ where $\kappa $ is weakly compact. (Recall that $L^g_{\lambda \kappa }$ is the geometric fragment of $L_{\lambda \kappa }$; the set of formulas $\forall \vec{x}(\varphi (\vec{x})\to \psi (\vec{x}))$, where $\varphi $ and $\psi $ are positive existential. When $\lambda =\kappa =\omega $ such theories are called coherent or h-inductive, and they form the subject of positive model theory. Infinite quantifier geometric theories are inevitable, e.g.~in the study of abstract elementary classes, or more generally in the study of accessible categories.)

We refer to \cite[Definition 2.3]{bvalued} for the definition of a $(\lambda ,\kappa )$-coherent category. $(\kappa ,\kappa )$-coherent will be abbreviated as $\kappa $-coherent. A $\kappa $-topos is a Grothendieck topos which is $(\infty ,\kappa )$-coherent, see \cite{presheaftype} for an overview. 

We remark that $(\lambda ,\kappa )$-coherent categories are precisely the syntactic categories of theories in $L^g_{\lambda \kappa }$. When a theory $T\subseteq L_{\lambda \kappa }^g$ is replaced by its syntactic category $\mathcal{C}_T$, the internal models of $T$ in a $\kappa $-topos $\mathcal{E}$ get identified with $(\lambda ,\kappa )$-coherent functors $\mathcal{C}_T\to \mathcal{E}$, and the homomorphisms get identified with the natural transformations. This ``external'' approach allows a more direct use of category-theoretic techniques, which we will make use of.

Fix a weakly compact cardinal $\kappa $ (allowing $\kappa =\omega $). We will study $\kappa $-coherent functors $\mathcal{C}\to Sh(B,\tau _{\kappa -coh})$ where $\mathcal{C}$ is a $\kappa $-coherent category with $\kappa $-small disjoint coproducts, $B$ is a $\kappa $-coherent Boolean algebra and $\tau _{\kappa -coh}$ is formed by $\kappa $-small unions. We will adopt the terminology from \cite{bvalued} and call such functors $Sh(B)$-valued models. The setup is motivated by an infinitary version of Lurie's theorem: if $\kappa $ is weakly compact and $\mathcal{C}$ is $\kappa$-coherent with $\kappa $-small disjoint coproducts then any $\kappa $-regular functor $\mathcal{C}\to \mathbf{Set}$ factors uniquely as a $Sh(B)$-valued model followed by global sections (\cite[Theorem 4.6]{bvalued}).

Building on this theorem, we obtain the following results: In Section 4.~we describe a functor which takes a $\kappa $-lex map $F:\mathcal{C}\to \mathbf{Set}$ to a distributive lattice $L^1(F)$, the ``Lindenbaum-Tarski algebra of closed formulas with parameters from $F$''. Then we characterize strongly positively closed and positively closed models $M:\mathcal{C}\to Sh(B)$ in terms of the lattice $L^1(\Gamma M)$ (Theorems \ref{strongposclequiv} and \ref{posclequiv}). Since $L^1(\Gamma M)$ is computed directly from the model $M$, these are local properties. When $\kappa =\omega $ and $B$ is a complete Boolean algebra we obtain an even more explicit local characterization of positively closed models (Theorem \ref{Bcompletestrongposcl}). We give several explicit examples of positively closed but not strongly positively closed $Sh(B)$-valued models of coherent theories (Examples \ref{Open(Q)} and \ref{Closed(X)}).

Finally, we remark that positively closed and strongly positively closed topos-valued models have been studied in \cite{exclosedkamsma}. They consider $(\lambda ,\omega )$-coherent functors $\mathcal{C}\to \mathcal{E}$ where $\mathcal{C}$ is $(\lambda ,\omega )$-coherent (i.e.~$\lambda $-geometric) and $\mathcal{E}$ is an $\omega $-topos (i.e.~a Grothendieck topos). In logical terms: we investigate models of $L_{\kappa \kappa }^g$-theories in a special class of $\kappa $-toposes, for $\kappa $ weakly compact, while \cite{exclosedkamsma} is about models of $L_{\lambda \omega }^g$-theories in arbitrary toposes. Whenever our results are related to theirs, it will be noted.

\section{Type space functors}

Let $\mathcal{C}$ be a coherent category. We can think of its objects as positive existential formulas, arrows as provably functional positive existential formulas, and subobjects as implications between formulas sharing the same set of free variables. Indeed, if $\mathcal{C}$ is a syntactic category then subobjects can be represented by monos of the form
\[
[\varphi (\vec{x})]\xrightarrow{[\varphi (\vec{x})\wedge \vec{x}=\vec{x'}]}[\psi (\vec{x'})]
\]
where $T\vdash \varphi (\vec{x})\to \psi (\vec{x})$. Therefore we think of $Sub_{\mathcal{C}}:\mathcal{C}^{op}\to \mathbf{DLat}$ as the functor which sends an object $[\psi (\vec{x})]$ to the Lindenbaum-Tarski algebra of positive existential formulas below $\psi (\vec{x})$. What (positive) model theorists call a type is just a prime-filter on this Lindenbaum-Tarski algebra, so the composite
\[
S_{\mathcal{C}}:\mathcal{C}\xrightarrow{Sub_{\mathcal{C}}}\mathbf{DLat}^{op}\xrightarrow{Spec} \mathbf{Set} 
\]
(sending an object $x$ to the set of prime filters on its subobject lattice) is often referred as the type space functor of $\mathcal{C}$.

We think of $\mathcal{C}\to \mathbf{Set}$ coherent functors as models. A tuple of elements $\vec{a}$ in a model $M$ has a type, given by the set of formulas $\varphi (\vec{x})$ for which $M\models \varphi(\vec{a})$. In the categorical language this corresponds to a natural transformation $tp:M\Rightarrow S_{\mathcal{C}}$ given by $tp_{x}(a)=\{u\subseteq x: a\in Mu \}\in S_{\mathcal{C}}(x)$.

In model theory the notion of a type plays a key role. In this paper type space functors will be essential. The above definitions suffice for the $\mathbf{Set}$-valued model theory of coherent categories. Now we extend them, to study $\kappa $-topos-valued models of $\kappa $-coherent categories.

\begin{definition}
  Let $\kappa $ be a weakly compact cardinal (in this paper this allows $\kappa =\omega$). Take a $\kappa $-coherent category $\mathcal{C}$ and a $\kappa $-topos $Sh(\mathcal{D})=Sh(\mathcal{D},\tau )$. We define the $Sh(\mathcal{D})$-valued type space functor $S_{\mathcal{C}}=S_{\mathcal{C}}^{Sh(\mathcal{D})}$ of $\mathcal{C}$ as

\[\begin{tikzcd}
	{\mathcal{C}} &&& {Sh(\mathcal{D})} \\
	x & \mapsto && {d\mapsto \mathbf{Coh}_{\kappa }(Sub_{\mathcal{C}}(x),Sub_{Sh(\mathcal{D})}(\widehat{d}))}
	\arrow["{S_{\mathcal{C}}}", from=1-1, to=1-4]
\end{tikzcd}\]
\end{definition}

\begin{remark}
\label{subisomega}
    The presheaf $Sub_{Sh(\mathcal{D})}(\widehat{\bullet })$ is the subobject classifier $ \Omega $, in particular it does not depend on the site presentation and it is a sheaf.
\end{remark}

\begin{remark}
    We assume $\kappa $ to be weakly compact to ensure that $\kappa $-small epimorphic families form a $\kappa $-topology, see \cite[Remark 2.8]{presheaftype}. At this point this is not strictly necessary, but for most results it will be. Type space functors could also be defined more generally, on so-called $\kappa $-sites, for that approach we refer to the analogous section of \cite{thesis}.
\end{remark}

\begin{proposition}
    The functor $S_{\mathcal{C}}$ is well-defined, i.e.~$\mathbf{Coh}_{\kappa }(Sub_{\mathcal{C}}(x),Sub_{Sh(\mathcal{D})}(\widehat{\bullet }))$ is a sheaf (of posets) wrt.~$\tau $.
\end{proposition}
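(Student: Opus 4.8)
The plan is to verify the sheaf condition directly, reducing everything to the fact that $\Omega\cong Sub_{Sh(\mathcal{D})}(\widehat{\bullet})$ is a sheaf (Remark \ref{subisomega}) together with the observation that its restriction maps are themselves $\kappa$-coherent. Write $A:=Sub_{\mathcal{C}}(x)$, a fixed $\kappa$-coherent poset, and set $G(d):=\mathbf{Coh}_{\kappa}(A,\Omega(d))$. For an arrow $d'\to d$ in $\mathcal{D}$, the transition map of $G$ is post-composition with the restriction $\Omega(d)\to\Omega(d')$; since this restriction is pullback of subobjects along $\widehat{d'}\to\widehat{d}$, it has both adjoints $\exists_f\dashv f^*\dashv\forall_f$ and hence preserves all meets and all joins, in particular finite meets and $\kappa$-small joins. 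Thus it is a $\kappa$-coherent morphism, and $G$ is a well-defined presheaf of posets, with the order inherited pointwise from $\Omega$.

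First I would fix a $\tau$-covering family of $d$ and a matching family $(f_i)_i$ with $f_i\in G(d_i)$, i.e.\ a compatible family of $\kappa$-coherent maps $f_i\colon A\to\Omega(d_i)$. For each fixed $a\in A$ the elements $f_i(a)\in\Omega(d_i)$ form a matching family in $\Omega$, so by the sheaf property of $\Omega$ they glue to a unique $f(a)\in\Omega(d)$ restricting to $f_i(a)$ on each $d_i$. This produces the only possible candidate function $f\colon A\to\Omega(d)$, which gives uniqueness for free; the remaining task is to check that $f$ lies in $G(d)$, that is, that $f$ is $\kappa$-coherent.

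The core of the argument is that $f$ preserves the $\kappa$-coherent structure, and this is exactly where the restriction maps of $\Omega$ do the work. For finite meets, $f(a\wedge b)$ and $f(a)\wedge f(b)$ both restrict to $f_i(a)\wedge f_i(b)=f_i(a\wedge b)$ on each $d_i$ (using that restriction preserves $\wedge$ and that each $f_i$ is $\kappa$-coherent), so they agree by separatedness of $\Omega$. The analogous computation for a $\kappa$-small join $\bigvee_s a_s$ is the main point to get right: one checks that $\bigvee_s f(a_s)$ restricts to $\bigvee_s f_i(a_s)$ on $d_i$, which uses precisely that the restriction $\Omega(d)\to\Omega(d_i)$ preserves $\kappa$-small joins, and then $\bigvee_s f_i(a_s)=f_i(\bigvee_s a_s)=f(\bigvee_s a_s)|_{d_i}$, so separatedness yields $f(\bigvee_s a_s)=\bigvee_s f(a_s)$; top and bottom are the nullary cases. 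I expect this verification, namely that gluing commutes with the infinitary join via the join-preservation of inverse-image maps on subobjects, to be the only genuinely nontrivial step, everything else being formal.

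Finally I would record the poset aspect required by the statement: the pointwise order on $G(d)$ is compatible with restriction, and $f\le g$ in $G(d)$ holds iff it holds after restriction to each $d_i$, by separatedness of $\Omega$ applied to the order relation $\le\hookrightarrow\Omega\times\Omega$, so that $G$ is genuinely a sheaf of posets. Abstractly, all of this is just the statement that $\mathbf{Coh}_{\kappa}(A,-)$ sends the limit presentation of $\Omega(d)$ coming from the sheaf condition on $\Omega$ to the corresponding limit, since the forgetful functor from $\kappa$-coherent posets to $\mathbf{Set}$ creates these limits and hom-functors preserve them; the direct computation above is merely this observation unwound.
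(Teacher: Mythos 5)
Your proof is correct and is essentially the paper's argument unwound element-by-element: both rest on the same two facts, namely that $Sub_{Sh(\mathcal{D})}(\widehat{\bullet})\cong\Omega$ is a sheaf and that the restriction maps $\widehat{h_i}^{-1}$ preserve $\kappa$-small meets and joins while being jointly monic, which is exactly what makes the sheaf-condition limit cone a limit of $\kappa$-coherent lattices preserved by $\mathbf{Coh}_{\kappa}(Sub_{\mathcal{C}}(x),-)$ — the abstract packaging you yourself note at the end and which the paper states directly. Your explicit check that the glued map is order-detectable locally is if anything slightly more careful than the paper's one-line remark on the poset structure.
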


\begin{proof}
    Let $(h_i:d_i\to d)_i$ be a $\tau $-family. Write $d_i\xleftarrow{h_{ij,i}}d_{ij}\xrightarrow{h_{ij,j}}d_j$ for the pullback of $d_i\to d\leftarrow d_j$. By Remark \ref{subisomega}
    
\[
\adjustbox{width=\textwidth}{
\begin{tikzcd}
	&& {Sub_{Sh(\mathcal{D})}(\widehat{d})} \\
	{Sub_{Sh(\mathcal{D})}(\widehat{d_i})} && {Sub_{Sh(\mathcal{D})}(\widehat{d_j})} & {Sub_{Sh(\mathcal{D})}(\widehat{d_k})} & \dots \\
	& {Sub_{Sh(\mathcal{D})}(\widehat{d_{ij}})} && \dots
	\arrow["{\widehat{h_i}^{-1}}"', from=1-3, to=2-1]
	\arrow["{\widehat{h_j}^{-1}}"', from=1-3, to=2-3]
	\arrow["{\widehat{h_k}^{-1}}", from=1-3, to=2-4]
	\arrow["{\widehat{h_{ij,i}}^{-1}}"', from=2-1, to=3-2]
	\arrow["{\widehat{h_{ij,j}}^{-1}}", from=2-3, to=3-2]
\end{tikzcd}
}
\]
    is a limit cone in $\mathbf{Set}$. Finally, given a function $F:Sub_{\mathcal{C}}(x)\to Sub_{Sh(\mathcal{D})}(\widehat{d})$, it preserves $\kappa $-small meets and $\kappa $-small joins iff each post-composition $\widehat{h_i}^{-1}\circ F$ does (as those are jointly monic). So this is a limit cone among $\kappa $-coherent lattices, which is preserved by $\mathbf{Coh}_{\kappa }(Sub_{\mathcal{C}}(x),-)$ (as a functor to $\mathbf{Set}$). So $S_{\mathcal{C}}(x)$ is a sheaf of sets, but also a sheaf of posets (with pointwise $\leq $) as post-composition preserves the ordering.
\end{proof}

\begin{example}
    Let $\mathcal{C}$ be a coherent category and let $X$ be a topological space. Then $S_{\mathcal{C}}:\mathcal{C}\to Sh(X)$ is given by $x\mapsto [U\mapsto Cont(U,S_{\mathcal{C}}(x))]$ where $S_{\mathcal{C}}(x)$ is the spectral space of prime filters on $Sub_{\mathcal{C}}(x)$. Indeed, we have a natural isomorphism $Cont(U,S_{\mathcal{C}}(x))\cong \mathbf{Coh}(Sub_{\mathcal{C}}(x),\mathcal{O}(U))$ (of posets): given a continuous map $f$ the corresponding homomorphism is $f^{-1}$, and given a homomorphism $h$ the corresponding function is $U\ni p \mapsto \{u\hookrightarrow x: p\in h(u) \}$.
\end{example}

\begin{definition}
\label{deftp}
    Let $\kappa $ be weakly compact, $\mathcal{C}$ a $\kappa $-coherent category, and $Sh(\mathcal{D})$ a $\kappa $-topos. Given a $\kappa $-coherent functor $M:\mathcal{C}\to Sh(\mathcal{D})$ we define a natural transformation $tp_M:M\Rightarrow S_{\mathcal{C}}$ by:
\[
\adjustbox{width=\textwidth}{
\begin{tikzcd}
	{Mx(d)} & { s} \\
	{\mathbf{Coh}_{\kappa}(Sub_{\mathcal{C}}(x), Sub_{Sh(\mathcal{D})}(\widehat{d}) )} & {x\supseteq u\ \mapsto \ \bigcup  \{im(\widehat{d'}\xrightarrow{\widehat{h}} \widehat{d}) : \restr{s}{d'}\in Mu(d') \}}
	\arrow[from=1-1, to=2-1]
	\arrow[shorten <=3pt, shorten >=3pt, maps to, from=1-2, to=2-2]
\end{tikzcd}
}
\]
In other terms: $tp_{M,x,d}(s)$ takes $u\hookrightarrow x$ to the pullback of $Mu\hookrightarrow Mx$ along $s:\widehat{d}\to Mx$.
\end{definition}

\begin{remark}
    Alternatively, $(tp_M)_{x,d}$ can be described as $s\mapsto [u\mapsto \chi _{Mu,d}(s)]$ where $\chi _{Mu}:Mx\to \Omega $ is the characteristic map of $Mu\hookrightarrow Mx$.
\end{remark}

\begin{proposition}
    $tp_M$ is well-defined.
\end{proposition}

\begin{proof}
    $tp_{M,x,d}(s): Sub_{\mathcal{C}}(x)\xrightarrow{M}Sub_{Sh(\mathcal{D})}(Mx)\xrightarrow{s^{-1}} Sub_{Sh(\mathcal{D})}(\widehat{d})$ preserves $\kappa $-small meets and $\kappa $-small joins: clear.

    $tp_{M,x}$ is a natural transformation: fix $k:d_0\to d$. The square 
\[\begin{tikzcd}
	{Mx(d)} && {Mx(d_0)} \\
	\\
	{\mathbf{Coh}_{\kappa }(Sub_{\mathcal{C}}(x), Sub_{Sh(\mathcal{D})}(\widehat{d}) )} && {\mathbf{Coh}_{\kappa }(Sub_{\mathcal{C}}(x), Sub_{Sh(\mathcal{D})}(\widehat{d_{0}}) )}
	\arrow["{Mx(k)}", from=1-1, to=1-3]
	\arrow[from=1-1, to=3-1]
	\arrow[from=1-3, to=3-3]
	\arrow["{(k^{-1})_{\circ }}"', from=3-1, to=3-3]
\end{tikzcd}\]
commutes because for any $s\in Mx(d)$ we have $k^{-1}\circ s^{-1}\circ M = (sk)^{-1}\circ M$.

    $tp_M$ is a natural transformation: fix $f:x\to y$ and $d\in \mathcal{D}$. The square 
\[\begin{tikzcd}
	{Mx(d)} && {My(d)} \\
	\\
	{\mathbf{Coh}_{\kappa }(Sub_{\mathcal{C}}(x), Sub_{Sh(\mathcal{D})}(\widehat{d}) )} && {\mathbf{Coh}_{\kappa }(Sub_{\mathcal{C}}(y), Sub_{Sh(\mathcal{D})}(\widehat{d}) )}
	\arrow["{Mf_d}", from=1-1, to=1-3]
	\arrow[from=1-1, to=3-1]
	\arrow[from=1-3, to=3-3]
	\arrow["{(f^{-1})^{\circ }}"', from=3-1, to=3-3]
\end{tikzcd}\]
commutes by the commutativity of 
\[\begin{tikzcd}
	{Sub_{\mathcal{C}}(y)} && {Sub_{Sh(\mathcal{D})}(My)} \\
	{Sub_{\mathcal{C}}(x)} && {Sub_{Sh(\mathcal{D})}(Mx)} && {Sub_{Sh(\mathcal{D})}(\widehat{d})}
	\arrow["{M}", from=1-1, to=1-3]
	\arrow["{f^{-1}}"', from=1-1, to=2-1]
	\arrow["{Mf^{-1}}", from=1-3, to=2-3]
	\arrow["{M}"', from=2-1, to=2-3]
	\arrow["{s^{-1}}", from=2-3, to=2-5]
\end{tikzcd}\]
\end{proof}

Now we prove that $tp$ is the smallest natural transformation from a model to the type space functor:

\begin{proposition}
\label{tpisminimal}
     Let $\kappa $ be weakly compact, $\mathcal{C}$ a $\kappa $-coherent category, $Sh(\mathcal{D})$ a $\kappa $-topos, and $M:\mathcal{C}\to Sh(\mathcal{D})$ a $\kappa $-coherent functor. Given a natural transformation $\beta :M\Rightarrow S_{\mathcal{C}}$ we have $\beta _{x,d}(s)(u)\supseteq tp_{M,x,d}(s)(u)$ for any $x\in \mathcal{C}$, $d\in \mathcal{D}$, $s\in Mx(d)$ and $u\hookrightarrow \widehat{x}$.
\end{proposition}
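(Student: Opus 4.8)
The plan is to \emph{internalize} the inequality so that it reduces to a single containment of subobjects of $Mx$. The key remark is that, for a fixed subobject $u\hookrightarrow x$, ``evaluation at $u$'' is a morphism of sheaves $\mathrm{ev}_u\colon S_{\mathcal{C}}(x)\to \Omega$: at stage $d$ it sends $g\in \mathbf{Coh}_{\kappa}(Sub_{\mathcal{C}}(x),Sub_{Sh(\mathcal{D})}(\widehat{d}))$ to $g(u)\in Sub_{Sh(\mathcal{D})}(\widehat{d})\cong \Omega(d)$ (Remark \ref{subisomega}), and this is natural in $d$ precisely because the transition maps of $S_{\mathcal{C}}(x)$ are post-composition with the $\widehat{k}^{-1}$. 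Composing, $\mathrm{ev}_u\circ \beta_x\colon Mx\to \Omega$ classifies a subobject $A\hookrightarrow Mx$, and Yoneda applied to $\mathrm{ev}_u\circ \beta_x\circ s$ gives
\[
\beta_{x,d}(s)(u)=s^{-1}(A)\qquad\text{while by definition}\qquad tp_{M,x,d}(s)(u)=s^{-1}(Mu).
\]
Since pullback along $s\colon \widehat{d}\to Mx$ is monotone, the whole proposition follows once I show the single inclusion $Mu\subseteq A$ in $Sub_{Sh(\mathcal{D})}(Mx)$.

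To obtain $Mu\subseteq A$ it is enough to check that $\mathrm{ev}_u\circ \beta_x$ becomes constantly true after restriction along the mono $Mm\colon Mu\hookrightarrow Mx$ representing $u$, i.e.\ that $\mathrm{ev}_u\circ \beta_x\circ Mm=\mathrm{true}_{Mu}$. This is where naturality of $\beta$ enters: the naturality square for $m\colon u\to x$ gives $\beta_x\circ Mm=S_{\mathcal{C}}(m)\circ \beta_u$, so it suffices to compute $\mathrm{ev}_u\circ S_{\mathcal{C}}(m)$. At stage $d$ the map $S_{\mathcal{C}}(m)$ is pre-composition with $m^{-1}=Sub_{\mathcal{C}}(m)$, sending $h\colon Sub_{\mathcal{C}}(u)\to \Omega(d)$ to $h\circ m^{-1}$, whence $(\mathrm{ev}_u\circ S_{\mathcal{C}}(m))_d(h)=h(m^{-1}(u))$. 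But pulling the subobject $m$ back along itself yields the identity of $u$, so $m^{-1}(u)=\top_u$, and every $\kappa$-coherent $h$ preserves the top element; thus $\mathrm{ev}_u\circ S_{\mathcal{C}}(m)=\mathrm{true}$ and therefore $\mathrm{ev}_u\circ \beta_x\circ Mm=\mathrm{true}\circ \beta_u=\mathrm{true}_{Mu}$. Hence $Mu\subseteq A$, which is what was needed.

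The step I expect to be the main obstacle is the first one: recognizing that evaluation-at-$u$ is a bona fide arrow $S_{\mathcal{C}}(x)\to \Omega$ and carefully justifying $\beta_{x,d}(s)(u)=s^{-1}(A)$. Once evaluation is internalized, the conclusion is forced purely by the two naturalities of $\beta$ (in $\mathcal{C}$ and in $\mathcal{D}$) together with $m^{-1}(u)=\top_u$. A more pedestrian alternative would cover the pullback $s^{-1}(Mu)\hookrightarrow \widehat{d}$ by representables $\widehat{e_i}$, apply naturality over each $\widehat{e_i}$ and glue by the sheaf condition; this works too, but forces one to deal with maps $\widehat{e_i}\to \widehat{d}$ that need not be of the form $\widehat{h}$ for an arrow $h$ of $\mathcal{D}$, a nuisance the internalization sidesteps completely.
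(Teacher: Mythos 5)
Your proof is correct, and it takes a genuinely different (more internal) route than the paper's. The paper works directly with the primary definition of $tp_{M,x,d}(s)(u)$ as the union $\bigcup\{im(\widehat{h}):\restr{s}{d'}\in Mu(d')\}$: for each such $h:d'\to d$ it pastes the naturality square of $\beta$ at $u\hookrightarrow x$ with the one at $h$, observes that the composite through $Sub_{\mathcal{C}}(u)$ sends $u$ to $\top$ (since $m^{-1}(u)=\top_u$ and coherent homomorphisms preserve $\top$), and concludes $im(\widehat{h})\subseteq\beta_{x,d}(s)(u)$. You instead internalize evaluation at $u$ as a sheaf map $\mathrm{ev}_u:S_{\mathcal{C}}(x)\to\Omega$, use the secondary description $tp_{M,x,d}(s)(u)=s^{-1}(Mu)$, and reduce everything to the single containment $Mu\subseteq A$ for the subobject $A$ classified by $\mathrm{ev}_u\circ\beta_x$ — which then follows from exactly the same two facts ($m^{-1}(u)=\top_u$ and preservation of $\top$). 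The underlying computation is identical; what your packaging buys is that the quantification over maps $h:d'\to d$ (and the naturality of $\beta_x$ in $d$) is absorbed once and for all into the statement that $\mathrm{ev}_u$ and $\beta_x$ are morphisms of sheaves, so the argument becomes stage-free. The paper's version is more elementary and stays closer to the formula for $tp$ it has just introduced. Both proofs are complete; your Yoneda identification $\beta_{x,d}(s)(u)=s^{-1}(A)$ is justified correctly via Remark \ref{subisomega}.
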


\begin{proof}
    We have to prove that if for some $h:d'\to d$ we have $\restr{s}{d'}\in Mu(d')$ then $im(\widehat{d'}\xrightarrow{\widehat{h}}\widehat{d})\subseteq \beta _{x,d}(s)(u)$. By naturality there is a commutative square

\[\begin{tikzcd}
	{Sub_{\mathcal{C}}(x)} && {Sub_{Sh(\mathcal{D})}(\widehat{d})} \\
	\\
	{Sub_{\mathcal{C}}(u)} && {Sub_{Sh(\mathcal{D})}(\widehat{d'})}
	\arrow["{\beta_{u,d'}(\restr{s}{d'})}", from=3-1, to=3-3]
	\arrow[from=1-1, to=3-1]
	\arrow["{\beta _{x,d}(s)}", from=1-1, to=1-3]
	\arrow[from=1-3, to=3-3]
\end{tikzcd}\]
As the lower composite takes $u\hookrightarrow x$ to $\top=\widehat{d'}\hookrightarrow \widehat{d'}$ so does the upper one, meaning that the pullback of $\beta _{x,d}(s)(u)\hookrightarrow \widehat{d}$ along $\widehat{d'}\to \widehat{d}$ is the maximal subobject.
\end{proof}

\begin{definition}
\label{elementarynattr}
    Let $\mathcal{C}$, $\mathcal{D}$ be categories with finite limits and let $F,G:\mathcal{C}\to \mathcal{D}$ be lex functors. A natural transformation $\alpha :F\Rightarrow G$ is \emph{elementary} if the naturality squares at monomorphisms are pullbacks.
\end{definition}

\begin{remark}
    Think of $\mathcal{C}$ as a syntactic category, and $M,N:\mathcal{C}\to \mathbf{Set}$ as models. Then a map $M\Rightarrow N$ is elementary iff the squares
    
\[
\adjustbox{scale=1}{
\begin{tikzcd}
	{[\varphi (\vec{x})]^M} && {[\vec{x}=\vec{x}]^M} \\
	\\
	{[\varphi (\vec{x})]^N} && {[\vec{x}=\vec{x}]^N}
	\arrow[hook, from=1-1, to=1-3]
	\arrow[from=1-1, to=3-1]
	\arrow[from=1-3, to=3-3]
	\arrow[hook, from=3-1, to=3-3]
\end{tikzcd}
}
\]
    are pullbacks. So maps between models are functions preserving the formulas in the fragment (commutativity), elementary maps are functions preserving and reflecting them (pullback). When $\mathcal{C}$ is a coherent category (i.e.~formulas in $\mathcal{C}$ are positive existential), maps are just homomorphisms, elementary maps are those homomorphisms which reflect positive existential formulas. These are called immersions in positive logic. 
\end{remark}

\begin{remark}
    Elementary maps were first defined in \cite{barr}. They are also called elementary in \cite{lurie}. They are called immersions e.g.~in \cite{exclosedkamsma}. They are called mono-cartesian in \cite{GARNER2020102831}.
\end{remark}

\begin{proposition}
\label{elemthenmonic}
    Let $\mathcal{C}$, $\mathcal{D}$ be categories with finite limits, and let $F,G:\mathcal{C}\to \mathcal{D}$ be lex functors. If $\alpha :F\Rightarrow G$ is elementary, then each component $\alpha _x$ is monic.
\end{proposition}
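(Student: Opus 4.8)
The plan is to exploit the elementarity hypothesis at a single, canonically chosen monomorphism: the diagonal $\Delta_x : x \rightarrowtail x\times x$, which exists since $\mathcal{C}$ is lex and which is always monic. The guiding principle is the standard categorical characterization of monomorphisms in a lex category: a map $m:A\to B$ is monic if and only if the square with vertical legs $m$ and $m\times m$ and horizontal legs the diagonals $\Delta_A,\Delta_B$ is a pullback. (The pullback of $\Delta_B$ along $m\times m$ is the kernel-pair object of $m$, and it agrees with $\Delta_A$ precisely when $m$ is monic.) So it suffices to produce, for each $x$, such a pullback square for $m=\alpha_x$.

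First I would write down the naturality square of $\alpha$ at the monomorphism $\Delta_x$. By Definition \ref{elementarynattr}, since $\Delta_x$ is monic, this square
\[
\begin{tikzcd}
Fx \arrow[r, "F\Delta_x"] \arrow[d, "\alpha_x"'] & F(x\times x) \arrow[d, "\alpha_{x\times x}"] \\
Gx \arrow[r, "G\Delta_x"'] & G(x\times x)
\end{tikzcd}
\]
is a pullback. The next step is to rewrite it using the lexness of $F$ and $G$. Since $F$ preserves finite products, the canonical comparison $F(x\times x)\cong Fx\times Fx$ identifies $F\Delta_x$ with the diagonal $\Delta_{Fx}$, and likewise $G\Delta_x$ with $\Delta_{Gx}$.

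The key compatibility to check is that, under these product-comparison isomorphisms, the right-hand vertical map $\alpha_{x\times x}$ is identified with $\alpha_x\times \alpha_x$. This follows from naturality of $\alpha$ at the two projections $\pi_1,\pi_2 : x\times x\to x$: the equations $G\pi_i\circ \alpha_{x\times x}=\alpha_x\circ F\pi_i$ say exactly that $\alpha_{x\times x}$ commutes with both projections in the expected way, so the universal property of the product $Gx\times Gx$ forces $\alpha_{x\times x}=\alpha_x\times \alpha_x$ after the identifications. Transporting the pullback square along these isomorphisms (pullbacks are preserved by isomorphisms of cones) then yields precisely the diagonal square for $m=\alpha_x$ as a pullback, and the characterization above gives that $\alpha_x$ is monic.

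I expect the only real obstacle to be the bookkeeping in the previous step: making sure the product-comparison isomorphisms for $F$ and $G$ are natural enough that the naturality square genuinely transports to the diagonal square, rather than merely up to an unexamined twist. Everything else is formal once one accepts the pullback characterization of monomorphisms, which is routine in any lex category.
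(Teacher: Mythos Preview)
Your argument is correct: the naturality square at the diagonal $\Delta_x$ is a pullback by elementarity, and after identifying $F\Delta_x$ with $\Delta_{Fx}$, $G\Delta_x$ with $\Delta_{Gx}$, and $\alpha_{x\times x}$ with $\alpha_x\times\alpha_x$ (all valid since $F,G$ are lex and $\alpha$ is natural at the projections), this becomes exactly the kernel-pair characterization of $\alpha_x$ being monic. The paper does not give its own proof here but simply cites \cite[Proposition 5.8]{bvalued}, so there is no in-paper argument to compare against; your proof is the standard one and almost certainly what the cited reference contains.
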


\begin{proof}
    \cite[Proposition 5.8]{bvalued}.
\end{proof}

\begin{proposition}
\label{elemifftpcommutes}
    Let $\kappa $ be weakly compact, $\mathcal{C}$ a $\kappa $-coherent category, $Sh(\mathcal{D})$ a $\kappa $-topos, and $M,N:\mathcal{C}\to Sh(\mathcal{D})$ $\kappa $-coherent functors. Given a natural transformation $\alpha :M\Rightarrow N$ it is elementary iff $tp_N\circ \alpha = tp_M$.
\end{proposition}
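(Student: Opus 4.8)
The plan is to unwind both sides of the equation $tp_N\circ \alpha = tp_M$ into explicit pullbacks of subobjects and then read off the pullback condition of Definition \ref{elementarynattr}. By the Yoneda isomorphism $Mx(d)\cong Hom(\widehat{d},Mx)$ (valid since $Mx$ is a sheaf), an element $s\in Mx(d)$ is the same as a map $s:\widehat{d}\to Mx$, and the component $\alpha_{x,d}$ sends $s$ to $\alpha_x\circ s:\widehat{d}\to Nx$. Using the reformulation in Definition \ref{deftp}, I would record that $tp_{M,x,d}(s)(u)=s^{-1}(Mu)$ and, by the pasting of pullback squares,
\[
(tp_N\circ \alpha)_{x,d}(s)(u)=tp_{N,x,d}(\alpha_x\circ s)(u)=(\alpha_x\circ s)^{-1}(Nu)=s^{-1}\big(\alpha_x^{-1}(Nu)\big),
\]
for every $x\in\mathcal{C}$, every mono $u\hookrightarrow x$, every $d\in\mathcal{D}$ and every $s:\widehat{d}\to Mx$. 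Thus $tp_N\circ\alpha=tp_M$ is exactly the assertion that $s^{-1}(Mu)=s^{-1}(\alpha_x^{-1}(Nu))$ for all such $u,d,s$.

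On the other side, the naturality square of $\alpha$ at the mono $u\hookrightarrow x$ identifies elementariness at $u$ with the statement that $Mu=\alpha_x^{-1}(Nu)$ as subobjects of $Mx$: the pullback of $Nu\hookrightarrow Nx$ along $\alpha_x$ is $\alpha_x^{-1}(Nu)$, and the square is a pullback precisely when this equals $Mu$. For the forward direction, if $\alpha$ is elementary then $Mu=\alpha_x^{-1}(Nu)$, and pulling back along each $s$ gives $s^{-1}(Mu)=s^{-1}(\alpha_x^{-1}(Nu))$, i.e.\ $tp_N\circ\alpha=tp_M$.

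For the converse the key point is to upgrade ``all pullbacks along representables agree'' to ``the subobjects agree''. I first note that commutativity of the naturality square always yields $Mu\subseteq\alpha_x^{-1}(Nu)$ (equivalently, this is the content of Proposition \ref{tpisminimal} applied to $\beta=tp_N\circ\alpha$, which gives $(tp_N\circ\alpha)_{x,d}(s)(u)\supseteq tp_{M,x,d}(s)(u)$ automatically), so it remains to prove the reverse inclusion $\alpha_x^{-1}(Nu)\subseteq Mu$. Here I would use that the (sheafified) representables $\{\widehat{d}\}_{d\in\mathcal{D}}$ form a generating family of the Grothendieck topos $Sh(\mathcal{D})$: if $Mu\subseteq\alpha_x^{-1}(Nu)$ were strict, there would exist some $d$ and a map $s:\widehat{d}\to \alpha_x^{-1}(Nu)\hookrightarrow Mx$ not factoring through $Mu\hookrightarrow Mx$. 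For this $s$ we have $s^{-1}(\alpha_x^{-1}(Nu))=\widehat{d}$ (the maximal subobject), while $s^{-1}(Mu)\neq\widehat{d}$, since a map factors through a subobject iff its pullback is maximal; this contradicts $tp_N\circ\alpha=tp_M$. Hence $\alpha_x^{-1}(Nu)=Mu$ for every mono $u\hookrightarrow x$, which is exactly elementariness.

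The main obstacle---modest, but the crux---is this converse generation step: one must invoke that the $\widehat{d}$ jointly generate $Sh(\mathcal{D})$ and that ``factoring through a subobject'' is detected by ``pullback is maximal'', so that equality of subobjects of $Mx$ can be tested on all maps out of the $\widehat{d}$. The only other points needing care, both routine in a topos, are the pullback-pasting identity $(\alpha_x\circ s)^{-1}(Nu)=s^{-1}(\alpha_x^{-1}(Nu))$ and the Yoneda identification of $\alpha_{x,d}(s)$ with $\alpha_x\circ s$.
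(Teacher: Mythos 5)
Your proof is correct and follows essentially the same route as the paper: the forward direction reduces to the identity $M=\alpha_x^{-1}\circ N$ on subobject lattices, and the converse produces a section $s:\widehat{d}\to Mx$ witnessing the failure of the pullback condition. The only cosmetic difference is that you extract this witness from the fact that the $\widehat{d}$ generate $Sh(\mathcal{D})$, whereas the paper gets it by evaluating the naturality square pointwise at $d$ (limits of sheaves being computed objectwise); these are interchangeable.
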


\begin{proof}
    \fbox{$\Rightarrow $} We need that for any $s\in Mx(d)$, the homomorphism $tp_{M,x,d}(s):Sub_{\mathcal{C}}(x)\xrightarrow{M} Sub_{Sh(\mathcal{D})}(Mx)\xrightarrow{s^{-1}}Sub_{Sh(\mathcal{D})}(\widehat{d})$ equals $(tp_N\circ \alpha )_{x,d}(s):Sub_{\mathcal{C}}(x)\xrightarrow{N} Sub_{Sh(\mathcal{D})}(Nx)\xrightarrow{\alpha _x^{-1}}Sub_{Sh(\mathcal{D})}(Mx) \xrightarrow{s^{-1}}Sub_{Sh(\mathcal{D})}(\widehat{d})$. But this is the case, since $\alpha $ is elementary iff 
\[
\adjustbox{scale=1}{
\begin{tikzcd}
	&& {Sub_{Sh(\mathcal{D})}(Mx)} \\
	{Sub_{\mathcal{C}}(x)} \\
	&& {Sub_{Sh(\mathcal{D})}(Nx)}
	\arrow["M", from=2-1, to=1-3]
	\arrow["N"', from=2-1, to=3-3]
	\arrow["{\alpha _x^{-1}}"', from=3-3, to=1-3]
\end{tikzcd}
}
\]
    commutes.

    \fbox{$\Leftarrow $} Assume that 
\[\begin{tikzcd}
	{Mu(d)} && {Mx(d)} \\
	{Nu(d)} && {Nx(d)}
	\arrow[hook, from=1-1, to=1-3]
	\arrow["{\alpha _{u,d}}"', from=1-1, to=2-1]
	\arrow["{\alpha _{x,d}}", from=1-3, to=2-3]
	\arrow[hook, from=2-1, to=2-3]
\end{tikzcd}\]
is not a pullback for some $u\hookrightarrow x$ in $\mathcal{C}$ and $d\in \mathcal{D}$. That is, we can find $s\in Mx(d)$ s.t.~$\alpha _{x,d}(s)\in Nu(d)$ but $s\not \in Mu(d)$. Then $s^{-1}\circ \alpha _x^{-1}\circ N$ takes $u\hookrightarrow x$ to the maximal subobject $\widehat{d}\to \widehat{d}$ but $s^{-1}\circ M$ does not.
\end{proof}

\section{Maps to the type space functor}

In \cite[Section 3]{bvalued} we described the method of diagrams in the language of categorical logic, based on the following idea: adding constant symbols to a theory is the same as taking a (sufficiently filtered) colimit of slices of the syntactic category. Indeed, moving from $\mathcal{C}$ to a slice $\mathcal{C}\to \faktor{\mathcal{C}}{x}$ adds a ``generic'' global element to $x$, namely the diagonal $x\xrightarrow{\Delta } x\times x$. What properties these added elements will have, depends on the indexing diagram of the colimit. Usually $\mathcal{C}$ sits at an initial position in the diagram and by writing $\varphi :\mathcal{C}\to \mathcal{C}'$ for the cocone map we obtain an extension, s.t.~precomposition with $\varphi $ yields an equivalence between $\mathcal{C}'$-models and $\mathcal{C}$-models equipped with a correct interpretation of the constants. (If $\kappa $ is weakly compact then a $\kappa $-filtered colimit of $\kappa $-coherent categories is $\kappa $-coherent, see \cite[Theorem 4.1.i)]{bvalued}. When $\kappa $ is an arbitrary regular cardinal we have to treat $\mathcal{C}'$ as a $\kappa $-site, but this technicality will not play a role in this paper.)

In \cite{bvalued} we used this idea to name the elements of a $\kappa $-lex functor $K:\mathcal{C}\to \mathcal{D}$, i.e.~to build an extension $\mathcal{C}_K$ of $\mathcal{C}$ whose $\mathcal{D}$-models correspond to the continuations of $K$. Here our goal is to prove that a natural transformation $\beta :M\Rightarrow S_{\mathcal{C}}$ from a model to the type space functor can be factored through a model $N$ as $M\xRightarrow{\alpha }N\xRightarrow{tp_N}S_{\mathcal{C}}$, at least in a lax sense: that $\beta \subseteq tp_N\circ \alpha $ (pointwise). This will be a key technical tool in the study of positively closed models.

Before stating the precise statement, we discuss whether topos-valued types can be realized by topos-valued models.

\begin{definition}
\label{realtypes}
    Let $\kappa $ be weakly compact, and $\mathcal{E}$ a $\kappa $-topos. We say that $\mathcal{E}$ \emph{can realize $\kappa $-types} if given a $\kappa$-coherent category $\mathcal{C}$ and a 
$\kappa$-coherent functor $p:Sub_{\mathcal{C}}(1)\to \mathcal{E}$, there exists a $\kappa$-coherent functor $M:\mathcal{C}\to \mathcal{E}$ with $p\leq \restr{M}{Sub_{\mathcal{C}}(1)}$: 
\[\begin{tikzcd}
	{Sub_{\mathcal{C}}(1)} && {\mathcal{E}} \\
	\\
	{\mathcal{C}}
	\arrow[""{name=0, anchor=center, inner sep=0}, "p", from=1-1, to=1-3]
	\arrow[""{name=0p, anchor=center, inner sep=0}, phantom, from=1-1, to=1-3, start anchor=center, end anchor=center]
	\arrow[hook, from=1-1, to=3-1]
	\arrow[""{name=1, anchor=center, inner sep=0}, "M"{description}, dashed, from=3-1, to=1-3]
	\arrow[""{name=1p, anchor=center, inner sep=0}, phantom, from=3-1, to=1-3, start anchor=center, end anchor=center]
	\arrow["\leq"{marking, allow upside down}, shift right=5, draw=none, from=0p, to=1p]
\end{tikzcd}\]
\end{definition}

\begin{example}
    Take $\kappa =\omega $ and $\mathcal{E}=Sh(X)$ where $X$ is a topological space. We can think of the coherent category $\mathcal{C}$ as the syntactic category of some coherent theory $T\subseteq L_{\omega \omega }^g$, then a coherent functor $M:\mathcal{C}\to Sh(X)$ can be identified with a $Sh(X)$-valued (or internal) model of $T$. A $Sh(X)$-valued model can be described explicitly, as a sheaf of $L$-structures, such that each stalk is a model (see \cite[Remark 3, Lecture 16X]{lurie}). The above lifting property now says, that if we associate to each closed positive existential formula an open set in a compatible way, then we can find a $Sh(X)$-valued model, such that at any point $a\in X$, the stalk $M_a$ satisfies all prescribed formulas (i.e.~the ones whose associated open set contains $a$).
\end{example}

\begin{remark}
    In Definition \ref{realtypes} what we call ``type'' should be called a (positive) 0-type (without parameters), as it is a (generalized) prime filter on the Lindenbaum-Tarski algebra of closed positive existential formulas. However, realizing 0-types suffices to realize n-types. Let $x$ be an object of $\mathcal{C}$ (e.g.~the object $[\vec{x}=\vec{x}]$ of a syntactic category). Given a coherent functor $Sub_{\mathcal{C}}(x)\to \mathcal{E}$, we can apply the lifting property with $\faktor{\mathcal{C}}{x}$ to obtain
\[\begin{tikzcd}
	{Sub_{\faktor{\mathcal{C}}{x}}(1)} & {Sub_{\mathcal{C}}(x)} && {\mathcal{E}} \\
	{\faktor{\mathcal{C}}{x}} & {}
	\arrow[equals, from=1-1, to=1-2]
	\arrow[hook', from=1-1, to=2-1]
	\arrow["p", from=1-2, to=1-4]
	\arrow[between={0}{0.4}, Rightarrow, from=1-2, to=2-2]
	\arrow["M"', dashed, from=2-1, to=1-4]
\end{tikzcd}\]
Write $M_0$ for the composite $\mathcal{C}\to \faktor{\mathcal{C}}{x}\xrightarrow{M}\mathcal{E}$ and take $i:u\hookrightarrow x$. As $M$ preserves pullbacks we get
\[\begin{tikzcd}
	&&&& {p(i)} \\
	u && x && {M(i)} && 1 \\
	{u\times x} && {x\times x} && {M_0(u)} && {M_0(x)} \\
	& x
	\arrow[dashed, from=1-5, to=2-5]
	\arrow[hook, from=1-5, to=2-7]
	\arrow[""{name=0, anchor=center, inner sep=0}, "i", hook, from=2-1, to=2-3]
	\arrow[""{name=0p, anchor=center, inner sep=0}, phantom, from=2-1, to=2-3, start anchor=center, end anchor=center]
	\arrow["{\langle 1 ,i\rangle}"', from=2-1, to=3-1]
	\arrow["i"{description, pos=0.7}, hook, from=2-1, to=4-2]
	\arrow["{\Delta }", from=2-3, to=3-3]
	\arrow["1"{description, pos=0.7}, from=2-3, to=4-2]
	\arrow[""{name=1, anchor=center, inner sep=0}, hook, from=2-5, to=2-7]
	\arrow[from=2-5, to=3-5]
	\arrow["s", from=2-7, to=3-7]
	\arrow[""{name=2, anchor=center, inner sep=0}, "{i\times 1}", from=3-1, to=3-3]
	\arrow[""{name=2p, anchor=center, inner sep=0}, phantom, from=3-1, to=3-3, start anchor=center, end anchor=center]
	\arrow["{\pi _2}"', from=3-1, to=4-2]
	\arrow["{\pi _2}", from=3-3, to=4-2]
	\arrow[""{name=3, anchor=center, inner sep=0}, "{M_0(i)}"', hook, from=3-5, to=3-7]
	\arrow["pb"{description, pos=0.4}, draw=none, from=0p, to=2p]
	\arrow["pb"{description}, draw=none, from=1, to=3]
\end{tikzcd}\]
So the $M$-image of $\Delta $ gives a global element $s:1\to M_0(x)$ whose restriction to $p(i)\subseteq 1$ factors through the subobject $M_0(i):M_0(u)\hookrightarrow M_0(x)$.

In the situation of the previous example this means, that we get a $Sh(X)$-valued model $M_0$, together with a global section $s$ of $M_0(x)$, which lies in the subsheaf $M_0(u)$ over the open set which was prescribed for $i:u\hookrightarrow x$. In other terms, when $\mathcal{C}$ is a syntactic category and $x=[x_1=x_1\wedge \dots \wedge x_n=x_n]$, then at any point $a\in X$ the germ of the global section $s_a\in M_a^n$ (or $s_a\in (Mx_1)_a\times \dots (Mx_n)_a$ if there are different sorts) satisfies all formulas $\varphi (\vec{x})$ whose prescribed open contains $a$.  
\end{remark}

\begin{theorem}
\label{setrealtypes}
    $\mathbf{Set}$ can realize $\kappa $-types for any strongly compact $\kappa $. More is true: any $\kappa$-coherent functor $p:Sub_{\mathcal{C}}(1)\to \mathbf{Set}$ admits a strict extension to $\mathcal{C}$ (meaning: the triangle commutes).
\end{theorem}

\begin{proof}
    There is a conservative $\kappa $-coherent functor $\langle M_i \rangle _i : \mathcal{C}\to \mathbf{Set}^{I}$, hence there is a conservative $\kappa $-coherent functor $J:\mathcal{C}\to \mathcal{D}$ where $\mathcal{D}$ is a small Boolean $\kappa $-coherent category and $Sub_{\mathcal{D}}(1)$ is a powerset Boolean algebra.

    The $\kappa $-complete prime filter and its complement can be separated by a $\kappa $-complete ultrafilter $U:Sub_{\mathcal{D}}(1)\to 2$. It suffices to prove that $U$ extends to a coherent functor $M_U:\mathcal{D}\to \mathbf{Set}$. 

\[\begin{tikzcd}
	{Sub_{\mathcal{C}}(1)} && {\mathbf{Set}} \\
	{Sub_{\mathcal{D}}(1)} \\
	{\mathcal{D}}
	\arrow["p", from=1-1, to=1-3]
	\arrow[from=1-1, to=2-1]
	\arrow["U"{description}, from=2-1, to=1-3]
	\arrow[from=2-1, to=3-1]
	\arrow["{M_U}"', dashed, from=3-1, to=1-3]
\end{tikzcd}\]
    
    Write $\mathcal{D}_U=\colim _{v\in U}\faktor{\mathcal{D}}{v}$, then this is a consistent ($0\neq 1$) $\kappa$-coherent category and $\mathcal{D}\to \mathcal{D}_U$ is $\kappa$-coherent (see \cite[Theorem 4.1.i)]{bvalued}). So there is a $\kappa$-coherent functor $\mathcal{D}_U\to \mathbf{Set}$. It follows that $M_U:\mathcal{D}\to \mathcal{D}_U\to \mathbf{Set}$ takes each $v\in U$ to $1\in \mathbf{Set}$. Since $U$ is maximal, this proves $\restr{M_U}{Sub_{\mathcal{D}}(1)}=U$.
\end{proof}

\begin{theorem}
\label{Bvaltypesreal}
    Take a complete Boolean algebra $B$. Then $Sh(B,\tau _{\omega -coh})$ can realize $\omega $-types.
\end{theorem}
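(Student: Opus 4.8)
The plan is to reduce the statement to the already-established $\mathbf{Set}$-valued case (Theorem \ref{setrealtypes}) by working pointwise over the Stone space, and then to confront the genuinely topos-theoretic difficulty of reassembling the pointwise data into a single coherent functor into $Sh(B)$. First I would pin down the data. By Remark \ref{subisomega} and the $Sh(X)$-description in the example following it, $Sub_{Sh(B,\tau _{\omega -coh})}(1)\cong Id_{\omega }(B)$, the frame of ideals of $B$, which is exactly the frame of opens of the Stone space $X=St(B)$; since $B$ is complete, $X$ is extremally disconnected. Thus a $\kappa $-coherent $p:Sub_{\mathcal{C}}(1)\to Sh(B)$ is simply a homomorphism of bounded distributive lattices $p:Sub_{\mathcal{C}}(1)\to Id_{\omega }(B)$, and realizing it (Definition \ref{realtypes}) means producing a coherent $M:\mathcal{C}\to Sh(B)$ with $\restr{M}{Sub_{\mathcal{C}}(1)}\geq p$.

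Next, pointwise reduction. For a point $x\in X$, i.e.~an ultrafilter $U_x$ on $B$, the stalk $Id_{\omega }(B)\to 2$ sends an ideal $I$ to $\top $ iff $I\cap U_x\neq \emptyset $. Because $U_x$ is a prime filter closed under finite meets and upward passage while ideals are closed under finite meets and downward passage, one checks readily that $p_x:=\mathrm{stalk}_x\circ p:Sub_{\mathcal{C}}(1)\to 2$ preserves finite meets, finite joins, $\top $ and $\bot $; that is, $p_x$ is a prime filter on $Sub_{\mathcal{C}}(1)$, equivalently a $\mathbf{Set}$-type. By Theorem \ref{setrealtypes} -- where at $\kappa =\omega $ the required $\omega $-complete ultrafilters are just ordinary ultrafilters, so no large-cardinal hypothesis is actually consumed -- each $p_x$ is (even strictly) realized by a coherent $M_x:\mathcal{C}\to \mathbf{Set}$ with $\restr{M_x}{Sub_{\mathcal{C}}(1)}=p_x$.

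The crux is the assembly of the family $(M_x)_{x\in X}$ into one coherent functor $M:\mathcal{C}\to Sh(X)$ whose stalks are the $M_x$. The naive candidate is the direct image $M=\pi _*\langle M_x\rangle _x$ along the continuous bijection $X_{disc}\to X$, so that $Mc(\widehat{b})=\prod _{x\in \widehat{b}}M_xc$; this is left exact, but it is \emph{not} coherent, since for a join $u\vee v$ of subobjects a global section $s$ has $\{x:s_x\in M_xu\}$ typically non-clopen, and $\pi _*$ therefore fails to preserve the union. The remedy is to pass to the subsheaf of suitably ``continuous'' sections, for which these definable sets are forced to be clopen; this is exactly the step that uses the completeness of $B$ (equivalently the extremal disconnectedness of $X$), which supplies the clopen approximations and gluings, together with $\kappa =\omega $, which keeps all the relevant joins and image factorizations finitary. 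The latitude afforded by ``$\geq p$'' rather than strict equality gives room to enlarge sections where continuity would otherwise fail.

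I expect this last step to be the main obstacle, and the cleanest way to execute it is to recast it as a forcing/Henkin construction: regard $p$ as an assignment of $Id_{\omega }(B)$-valued truth values to the closed formulas of $\mathcal{C}$, and extend it to a full $Sh(B)$-valued structure by adjoining witnesses and defining the value of an existential as a join in $B$. Completeness of $B$ is precisely what guarantees that these joins exist and what forces the resulting functor to preserve images, reducing the problem to checking that finite meets, finite joins, images and pullback-stability are maintained along the construction -- bookkeeping of the same flavour as the $\mathbf{Set}$-valued argument of Theorem \ref{setrealtypes}, but carried out over $Id_{\omega }(B)$ in place of $2$.
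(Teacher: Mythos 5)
Your setup is correct and you have accurately located the obstruction: the pointwise realizations $M_x$ exist by Theorem \ref{setrealtypes}, the naive product $\prod_{x}M_xc$ is left exact but not coherent because $\{x:s_x\in M_xu\}$ need not be clopen in $St(B)$. But the proposal stops exactly where the proof has to begin. Neither of your two suggested remedies is carried out, and neither is obviously viable as stated. Restricting to ``continuous'' sections (those for which all definable sets are clopen) plausibly repairs preservation of unions, but it breaks preservation of effective epimorphisms: given a cover $c\twoheadrightarrow d$ and a continuous section $t$ of $Md$, you can lift each $t_x$ pointwise, but nothing forces the chosen lifts to assemble into a continuous section, even locally, and extremal disconnectedness does not hand you this for free. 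The Henkin/forcing alternative over $Id_{\omega}(B)$ is a genuinely different construction that would need its own multi-step verification (existence of witnessing joins, sheaf condition, preservation of images and their pullback-stability, compatibility with $p$); asserting that ``completeness of $B$ is precisely what guarantees'' these is not an argument. So there is a real gap: the main content of the theorem is missing.

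For comparison, the paper sidesteps the continuity problem entirely by an algebraic trick: since complete Boolean algebras are injective among distributive lattices, $B$ is a retract of a powerset algebra, $B\hookrightarrow 2^I\to B$. Over $2^I$ your pointwise-product construction works verbatim --- one sets $Mc(J)=\prod_{j\in J}M_jc$, and the set $\{j\in J: a_j\in M_ju\}$ is automatically an element of $2^I$, so binary unions (and everything else) are preserved and no continuity condition is ever needed. One then transports the realization back to $Sh(B,\tau_{\omega-coh})$ along the retraction, which is where the slack in ``$\geq p$'' (rather than equality) is actually consumed. If you want to rescue your outline, the missing idea is precisely this split embedding $B\hookrightarrow 2^I$; without it, the assembly step you defer is the whole theorem.
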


\begin{proof}
    By \cite[Theorem 3.2]{injdlat} complete Boolean algebras are injective objects (even in the category of distributive lattices), so there is a split monomorphism $B\hookrightarrow 2^I$. So by

\[\begin{tikzcd}
	{Sub_{\mathcal{C}}(1)} & {Sh(B,\tau _{coh})} & {Sh(2^I,\tau _{coh})} & {Sh(B,\tau _{coh})} \\
	\\
	{\mathcal{C}}
	\arrow["p", from=1-1, to=1-2]
	\arrow[hook, from=1-1, to=3-1]
	\arrow[from=1-2, to=1-3]
	\arrow[curve={height=-18pt}, equals, from=1-2, to=1-4]
	\arrow[from=1-3, to=1-4]
	\arrow[""{name=0, anchor=center, inner sep=0}, dashed, from=3-1, to=1-3]
	\arrow["\leq"{marking, allow upside down}, draw=none, from=1-2, to=0]
\end{tikzcd}\]
it is enough to prove that $Sh(2^I,\tau _{coh})$ can realize $\omega $-types. 

For any $i\in I$ fix an extension
\[\begin{tikzcd}
	{Sub_{\mathcal{C}}(1)} && {Sh(2^I,\tau _{coh})} && {\mathbf{Set}} \\
	\\
	{\mathcal{C}}
	\arrow[""{name=0, anchor=center, inner sep=0}, "p", from=1-1, to=1-3]
	\arrow[hook, from=1-1, to=3-1]
	\arrow["{\pi _i^*}", from=1-3, to=1-5]
	\arrow[""{name=1, anchor=center, inner sep=0}, "{M_i}"', from=3-1, to=1-5]
	\arrow["{=}"', draw=none, from=0, to=1]
\end{tikzcd}\]
where $\pi _i:2^I\to 2$ is the $i^{\text{th}}$ projection, in other terms it is the principal ultrafilter on $i$.

We define a functor $M:\mathcal{C}\to Sh(2^I,\tau _{coh})$ by $Mx(J)=\prod _{j\in J}M_jx$ (for $x\in \mathcal{C}$ and $J\subseteq I)$. This is clearly a functor $\mathcal{C}\to \mathbf{Set}^{(2^I)^{op}}$.

$Mx$ is a sheaf (even wrt.~$\tau _{can}$):
\[\begin{tikzcd}
	& {\prod _{\bigcup J_k} M_jx} \\
	{\prod _{J_1} M_jx} && {\prod _{J_2} M_jx} && \dots \\
	& {\prod _{J_1 \cap J_2} M_jx} && {\dots }
	\arrow[from=1-2, to=2-1]
	\arrow[from=1-2, to=2-3]
	\arrow[from=1-2, to=2-5]
	\arrow[from=2-1, to=3-2]
	\arrow[from=2-1, to=3-4]
	\arrow[from=2-3, to=3-2]
	\arrow[from=2-5, to=3-4]
\end{tikzcd}\]
is a limit.

$M$ is coherent: We need that the stalk at any point $U\in Spec(2^I)$ is a model. But the stalk is the ultraproduct $\faktor{\prod _I M_i}{U}$ which is a model.

Alternatively, one could also check this by hand: finite limits and effective epis are preserved (even by $\mathcal{C}\to \mathbf{Set}^{(2^I)^{op}}$) as the product of limit diagrams/ surjections is a limit diagram/ surjection. The initial object is taken to 
\[J\mapsto \begin{cases}
    * \text{ if } J=\emptyset \\
    \emptyset \text{ otherwise}
\end{cases}\]
which is the initial object in $Sh(2^I,\tau _{coh})$. Finally, binary unions are preserved: given $\vec{a}\in \prod _JM_ju \cup \prod _J M_jv \subseteq \prod _J M_jx$ we can find a cover $J=J_1\cup J_2$ such that the restriction of $\vec{a}$ to $J_1$ lives in $\prod _{J_1}M_ju$ and the restriction to $J_2$ lives in $\prod _{J_2}M_jv$. Just take $J_1=\{j:a_j\in M_{j}u \}$ and $J_2=\{j:a_j\in M_{j}v \}$.

$\restr{M}{Sub_{\mathcal{C}}(1)}\geq p$: Given $u\hookrightarrow 1$ and $i\in I$ we have $\{i\}\in p(u)\in Id(2^I)$ iff $M_i(u)=*$ and given $J\subseteq I$ we have $J\in M(u)\in Id(2^I)$ iff $\prod _JM_j(u) =*$ iff $\forall j\in J: M_j(u)=*$. Therefore $J\in p(u) \Rightarrow \forall j\in J:\{j\} \in p(u)\Rightarrow \forall j\in J:M_j(u)=* \Rightarrow J\in M(u)$.

\end{proof}

\begin{question}
Can we characterize those Grothendieck toposes which realize $\omega $-types? Are there any counterexamples?
\end{question}

Now we can state the main theorem of this section. The proof is moved to the Appendix, here we will give an outline.

\begin{theorem}
\label{factorbeta}
    Let $\kappa $ be weakly compact, $\mathcal{C}$ a $\kappa $-coherent category with $\kappa $-small disjoint coproducts, $B$ a $\kappa $-coherent Boolean algebra, $M:\mathcal{C}\to Sh(B,\tau _{\kappa -coh})$ a $\kappa $-coherent functor and $\beta :M\Rightarrow S_{\mathcal{C}}^{Sh(B)}$ a natural transformation. Assume moreover, that $Sh(B,\tau _{\kappa -coh})$ can realize $\kappa $-types. 

    Then there is a $\kappa $-coherent functor $N:\mathcal{C}\to Sh(B,\tau _{\kappa -coh})$ together with a natural transformation $\alpha :M\Rightarrow N$, such that $\beta \leq tp_N\circ \alpha  $ (pointwise).

\[\begin{tikzcd}
	{\mathcal{C}} &&& {} & {Sh(B)}
	\arrow[""{name=0, anchor=center, inner sep=0}, "M"{description}, curve={height=-30pt}, from=1-1, to=1-5]
	\arrow[""{name=0p, anchor=center, inner sep=0}, phantom, from=1-1, to=1-5, start anchor=center, end anchor=center, curve={height=-30pt}]
	\arrow[""{name=1, anchor=center, inner sep=0}, "{S_{\mathcal{C}}^{Sh(B)}}"{description}, curve={height=60pt}, from=1-1, to=1-5]
	\arrow[""{name=1p, anchor=center, inner sep=0}, phantom, from=1-1, to=1-5, start anchor=center, end anchor=center, curve={height=30pt}]
	\arrow["N"{description, pos=0.8}, curve={height=6pt}, from=1-1, to=1-5]
	\arrow["{\alpha }", between={0.3}{1}, Rightarrow, from=0, to=1-4]
	\arrow[""{name=2, anchor=center, inner sep=0}, "{\beta }"'{pos=0.4}, between={0.2}{1}, Rightarrow, from=0p, to=1p]
	\arrow["tp", between={0.1}{0.7}, Rightarrow, from=1-4, to=1]
	\arrow["\leq"{description}, draw=none, from=2, to=1-4]
\end{tikzcd}\]
\end{theorem}

\begin{proof}[Proof outline.]
The construction has the following steps:
\begin{itemize}
    \item Using the ``method of diagrams'' from \cite{bvalued}, we build a $\kappa $-coherent category $\mathcal{C}_{M,\beta }$, together with $\kappa $-coherent functors $\mathcal{C}\xrightarrow{\varphi }\mathcal{C}_{M,\beta }\xleftarrow{\psi }B$, and a natural transformation $\delta $  from $\mathcal{C}\xrightarrow{M}Sh(B)\xrightarrow{\psi ^*}Sh(\mathcal{C}_{M,\beta })$ to $\mathcal{C}\xrightarrow{\varphi }\mathcal{C}_{M,\beta }\xrightarrow{Y}Sh(\mathcal{C}_{M,\beta })$.
\item We construct lifts
\[\begin{tikzcd}
	B && {Sh(B)} \\
	{Sub_{\mathcal{C}_{M,\beta }}} \\
	{\mathcal{C}_{M,\beta }} & {}
	\arrow[""{name=0, anchor=center, inner sep=0}, "Y", from=1-1, to=1-3]
	\arrow[""{name=0p, anchor=center, inner sep=0}, phantom, from=1-1, to=1-3, start anchor=center, end anchor=center]
	\arrow[hook, from=1-1, to=2-1]
	\arrow["{\psi }"', curve={height=40pt}, from=1-1, to=3-1]
	\arrow[""{name=1, anchor=center, inner sep=0}, "{\chi _0}"{description}, from=2-1, to=1-3]
	\arrow[""{name=1p, anchor=center, inner sep=0}, phantom, from=2-1, to=1-3, start anchor=center, end anchor=center]
	\arrow[""{name=1p, anchor=center, inner sep=0}, phantom, from=2-1, to=1-3, start anchor=center, end anchor=center]
	\arrow[hook, from=2-1, to=3-1]
	\arrow["{\chi }"', curve={height=12pt}, from=3-1, to=1-3]
	\arrow["{=}"{description}, draw=none, from=0p, to=1p]
	\arrow[between={0.2}{0.5}, Rightarrow, from=1p, to=3-2]
\end{tikzcd}\]
and call the resulting natural transformation $\mu :Y\Rightarrow \chi  \psi $ (there is at most one such natural transformation, but we give it a name). The construction of $\chi _0$ is explicit, then we obtain $\chi $ from the assumption that $Sh(B)$ can realize types.
\item Finally, we construct a natural transformation $\nu $ and put everything together in 
\[\begin{tikzcd}[cramped]
	&&& {Sh(B)} \\
	\\
	{\mathcal{C}} &&&& {Sh(\mathcal{C}_{M,\beta })} && {Sh(B)} \\
	&&&& {}
	\arrow["{\psi ^*}"{description}, from=1-4, to=3-5]
	\arrow[""{name=0, anchor=center, inner sep=0}, curve={height=-18pt}, equals, from=1-4, to=3-7]
	\arrow["M"{description}, curve={height=-18pt}, from=3-1, to=1-4]
	\arrow[""{name=1, anchor=center, inner sep=0}, "{Y\varphi }"{description}, curve={height=-25pt}, from=3-1, to=3-5]
	\arrow[""{name=2, anchor=center, inner sep=0}, "{S_{\mathcal{C}}^{Sh(\mathcal{C}_{M,\beta })}}"{description}, curve={height=25pt}, from=3-1, to=3-5]
	\arrow["{S_{\mathcal{C}}^{Sh(B)}}"{description}, curve={height=70pt}, from=3-1, to=3-7]
	\arrow[""{name=3, anchor=center, inner sep=0}, "{\chi ^*}"{description}, from=3-5, to=3-7]
	\arrow["{\nu }", Rightarrow, from=3-5, to=4-5]
	\arrow["\delta"', curve={height=6pt}, between={0.1}{0.9}, Rightarrow, from=1-4, to=1]
	\arrow["{\widetilde{\mu }}"', between={0.2}{0.8}, Rightarrow, from=0, to=3]
	\arrow["{tp_{Y\varphi }}"', between={0.2}{0.8}, Rightarrow, from=1, to=2]
\end{tikzcd}\]
Then $N=\chi ^*Y\varphi:\mathcal{C}\to Sh(B)$ is a $\kappa $-coherent functor, and $\alpha =\chi ^*\delta \circ \widetilde{\mu }M:M\Rightarrow N$ is a natural transformation. We claim that $\nu \circ \chi ^*tp_{Y\varphi }=tp_N$ and that $\beta \leq tp_N\circ \alpha $. 
\end{itemize}
\end{proof}

\begin{corollary}
      Let $\mathcal{C}$ be coherent with finite disjoint coproducts, $B$ a complete Boolean algebra, $M:\mathcal{C}\to Sh(B,\tau _{\omega -coh})$ a coherent functor, and $\beta :M\Rightarrow S_{\mathcal{C}}=S_{\mathcal{C}}^{Sh(B)}$ a natural transformation. 
    
    Then there is a coherent functor $N:\mathcal{C}\to Sh(B)$ and a natural transformation $\alpha :M\Rightarrow N$ with $\beta \leq tp_N\circ \alpha $.
\end{corollary}

\begin{proof}
    By Theorem \ref{factorbeta} and Theorem \ref{Bvaltypesreal}.
\end{proof}

\section{A lattice invariant of models}
\label{9}

In this section we will study the left Kan extension of $Sub_{\mathcal{C}}:\mathcal{C}^{op}\to \mathbf{DLat}$ along $Y:\mathcal{C}^{op}\hookrightarrow \mathbf{Lex}(\mathcal{C},\mathbf{Set})$, resulting $L^1:\mathbf{Lex}(\mathcal{C},\mathbf{Set}) \to \mathbf{DLat}$ (where $\mathcal{C}$ is a coherent category). The simple definition implies nice functorial properties. Then, in the next section we will translate between model-theoretic properties of a model $M:\mathcal{C}\to Sh(B)$ and algebraic properties of the lattice $L^1(\Gamma M)$. 

\begin{definition}
    Take a coherent category $\mathcal{C}$, and an object $x\in \mathcal{C}$. We write $L_{\mathcal{C}}^x=L^x$ for the left Kan extension
\[
\adjustbox{scale=0.85}{
\begin{tikzcd}
	{\mathcal{C}^{op}} & {(\faktor{\mathcal{C}}{x})^{op}} &&& {\mathbf{DLat}} \\
	\\
	{\mathbf{Lex}(\mathcal{C},\mathbf{Set})}
	\arrow["{({!}^{-1})^{op}}", from=1-1, to=1-2]
	\arrow["Y"', hook', from=1-1, to=3-1]
	\arrow["{Sub_{\faktor{\mathcal{C}}{x}}}", from=1-2, to=1-5]
	\arrow["{L^x}"', from=3-1, to=1-5]
\end{tikzcd}
}
\]
\end{definition}

\begin{remark}
    The composite $Sub_{\faktor{\mathcal{C}}{x}}\circ ({!}^{-1})^{op}$ is $Sub_{\mathcal{C}}(x\times -):\mathcal{C}^{op}\to \mathbf{DLat}$.
\end{remark}

\begin{remark}
    Take a lex functor $F:\mathcal{C}\to \mathbf{Set}$. Recall from \cite[Section 3]{bvalued} that the category $\mathcal{C}_F=\colim _{(y,b)\in (\int F)^{op}}\faktor{\mathcal{C}}{y}$ is the extension of $\mathcal{C}$ by adding global elements (constant symbols) naming the elements of $F$. The explicit computation of the Kan extension above yields $L^x(F)=\colim _{(y,b)\in (\int F)^{op}}Sub_{\mathcal{C}}(x\times y)$. It follows that $L^x(F)=Sub_{\mathcal{C}_F}(x)$ and therefore we can think of $L^x(F)$ as the Lindenbaum-Tarski algebra of formulas below $x$ with parameters from $F$.
\end{remark}

\begin{remark}
    If $\kappa $ is weakly compact and $\mathcal{C}$ is $\kappa$-coherent, then $\restr{L^x}{\mathbf{Lex}_{\kappa }(\mathcal{C},\mathbf{Set})}$ lands in the category of $\kappa$-coherent distributive lattices.
\end{remark}

\begin{remark}
\label{lantoset}
    Since the forgetful functor $U:\mathbf{DLat}\to \mathbf{Set}$ preserves filtered colimits, we have $U\circ L^x=Lan_Y(U\circ Sub_{\mathcal{C}}(x\times -))$. A left Kan extension along $Y:\mathcal{C}^{op}\to \mathbf{Lex}(\mathcal{C},\mathbf{Set})$ is the same as the domain restriction of the left Kan extension along $Y:\mathcal{C}^{op}\to \mathbf{Set}^{\mathcal{C}}$. Putting these together yields $U\circ L^x=\restr{Lan_{Y}(U\circ Sub_{\mathcal{C}}(x\times -))}{\mathbf{Lex}(\mathcal{C},\mathbf{Set})}$.
\end{remark}

This has some implications on the functorial properties of $L^x$. 

\begin{proposition}
\label{presfiltcolim}
    Let $\mathcal{C}$ be coherent, and $x\in \mathcal{C}$. Then $L^x:\mathbf{Lex}(\mathcal{C},\mathbf{Set})\to \mathbf{DLat}$ preserves filtered colimits.
\end{proposition}

\begin{proof}
    $Lan_Y(U\circ Sub_{\mathcal{C}}(x\times - )):\mathbf{Set}^{\mathcal{C}}\to \mathbf{Set}$ preserves all colimits (it is a left adjoint), and therefore its domain restriction to $\mathbf{Lex}(\mathcal{C},\mathbf{Set})$ preserves filtered colimits. Finally, apply Remark \ref{lantoset}.
\end{proof}

\begin{theorem}
\label{coeqforfree}
    Let $\mathcal{C}$ be a $(\lambda ,\omega)$-pretopos for $\lambda > \omega $ (that is: a $\lambda $-geometric category with $\lambda $-small disjoint coproducts and with quotients of equivalence relations). Then $\mathcal{C}$ has $\lambda $-small colimits which are preserved by any $(\lambda ,\omega )$-coherent functor. Moreover, the functor $Sub_{\mathcal{C}}:\mathcal{C}^{op}\to \mathbf{DLat}$ preserves $\lambda $-small limits.
\end{theorem}

\begin{proof}
    The first part of the statement follows from Lemma 1.4.19.~of \cite{elephant}. Since $Sub_{\mathcal{C}}$ maps disjoint coproducts to products, we are left to show that it takes coequalizers to equalizers. This is equivalent to the following exactness property: if $f$ and $g$ in
    
\[\begin{tikzcd}
	x && y && z \\
	& pb && {pb?} \\
	{f^{-1}u=g^{-1}u} && u && \bullet
	\arrow["f", shift left=2, from=1-1, to=1-3]
	\arrow["g"', shift right=2, from=1-1, to=1-3]
	\arrow[hook, from=3-3, to=1-3]
	\arrow["{f'}", shift left=2, from=3-1, to=3-3]
	\arrow["{g'}"', shift right=2, from=3-1, to=3-3]
	\arrow[hook, from=3-1, to=1-1]
	\arrow[two heads, from=1-3, to=1-5]
	\arrow[two heads, from=3-3, to=3-5]
	\arrow[hook, from=3-5, to=1-5]
\end{tikzcd}\]
pull back the same subobject from $u$, then the image factorization of $u\to y\to z$ yields a pullback (for the coequalizer $y\to z$). (This is because effective epis are stable under pullback and effective epi-mono factorizations are unique. So if $u$ can be pulled back from a subobject of $z$ along the coequalizer then the pullback square must be of this form. Since $Sub_{\mathcal{C}}$ maps effective epis to injections, under these conditions the coequalizer is mapped to a mono, which equalizes $f^{-1}$ and $g^{-1}$ and which is the largest such subobject.)

This is satisfied in $Sh(\mathcal{C},\tau _{\lambda -coh})$ as here $Sub_{Sh(\mathcal{C})}$ is representable via the subobject classifier, i.e.~it is $Sh(\mathcal{C})(-,\Omega )$. But the Yoneda-embedding $\mathcal{C}\to Sh(\mathcal{C},E_{\lambda })$ preserves and reflects finite limits and finite colimits (as it is a $(\lambda ,\omega )$-coherent functor), so the same exactness property holds in $\mathcal{C}$.
\end{proof}

\begin{theorem}
\label{preslambdalim}
    Assume that $\mathcal{C}$ is
    \begin{itemize}
        \item[a)] a coherent category with finite disjoint coproducts
        \item[ ] or
        \item[b)] a $(\lambda ,\omega )$-pretopos for $\lambda >\omega $.
    \end{itemize}
   Take $x\in \mathcal{C}$. Then $L^x:\mathbf{Lex}(\mathcal{C},\mathbf{Set})\to \mathbf{DLat}$ preserves
    \begin{itemize}
        \item[a)] finite products
        \item[ ] or
        \item[b)] $\lambda $-small limits.
    \end{itemize}
\end{theorem}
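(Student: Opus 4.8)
The plan is to reduce both cases to the behaviour of $L^x$ on representables — where the relevant limits turn into colimits in $\mathcal{C}$ — and then to propagate the result to all lex functors using that $L^x$ preserves filtered colimits (Proposition \ref{presfiltcolim}). Recall that for a representable $Yc$ the colimit formula collapses to $L^x(Yc)=Sub_{\mathcal{C}}(x\times c)$, and that the Yoneda embedding $Y\colon\mathcal{C}^{op}\hookrightarrow\mathbf{Lex}(\mathcal{C},\mathbf{Set})$ preserves all limits; in particular it turns the (finite, resp.\ $\lambda$-small) coproduct $\bigsqcup_i c_i$ in $\mathcal{C}$ into the product $\prod_i Yc_i$ in $\mathbf{Lex}(\mathcal{C},\mathbf{Set})$. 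Throughout I would use that products and filtered colimits in $\mathbf{Lex}(\mathcal{C},\mathbf{Set})$ are computed pointwise, and that $U\colon\mathbf{DLat}\to\mathbf{Set}$ creates products and filtered colimits, so all comparisons may be checked on underlying sets.

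First I would settle the representable case. Since finite (resp.\ $\lambda$-small) colimits in $\mathcal{C}$ are stable under pullback, the functor $x\times-\colon\mathcal{C}\to\mathcal{C}$ — being the composite of the base change $(x\to 1)^*$ with the forgetful left adjoint $\mathcal{C}/x\to\mathcal{C}$ — preserves them, so $x\times\bigsqcup_i c_i\cong\bigsqcup_i(x\times c_i)$. Combining this with the fact that $Sub_{\mathcal{C}}$ sends disjoint coproducts to products (case a)) and, more generally, $\lambda$-small colimits to $\lambda$-small limits (Theorem \ref{coeqforfree}, case b)), I obtain
\[
L^x\bigl(\textstyle\prod_i Yc_i\bigr)=Sub_{\mathcal{C}}\bigl(x\times\textstyle\bigsqcup_i c_i\bigr)\cong\textstyle\prod_i Sub_{\mathcal{C}}(x\times c_i)=\textstyle\prod_i L^x(Yc_i),
\]
and one checks this is the canonical comparison. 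The nullary case reads $L^x(1)=L^x(Y0)=Sub_{\mathcal{C}}(x\times 0)=Sub_{\mathcal{C}}(0)$, the terminal lattice. The same computation run with coequalizers, using that $Sub_{\mathcal{C}}$ takes coequalizers to equalizers (again Theorem \ref{coeqforfree}, together with stability), shows $L^x$ preserves equalizers of parallel pairs of representables.

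To pass to arbitrary lex functors I would write each factor as a filtered colimit of representables, $F_i=colim_{k\in K_i}Yc_{i,k}$. The step I expect to be the main obstacle is the interchange of a $\lambda$-small product with these colimits, since $\lambda$-small limits commute only with $\lambda$-\emph{filtered} colimits in $\mathbf{Set}$, whereas the $K_i$ are merely filtered. The resolution is to enlarge the index: the product category $\prod_{i\in I}K_i$ is again filtered (cocones and coequalizing morphisms are built componentwise, using filteredness of each $K_i$), and the canonical map $colim_{\prod_i K_i}\prod_i S_{i,k_i}\to\prod_i colim_{K_i}S_{i,k}$ is then an isomorphism in $\mathbf{Set}$, hence in $\mathbf{DLat}$. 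Consequently $\prod_i F_i=colim_{\prod_i K_i}Y(\bigsqcup_i c_{i,k_i})$ is a filtered colimit of representables; applying $L^x$ and the representable computation gives $L^x(\prod_i F_i)\cong\prod_i L^x(F_i)$.

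Finally, for case b) a general $\lambda$-small limit is the equalizer of a map between two $\lambda$-small products (indexed by the objects, resp.\ the morphisms, of the $\lambda$-small diagram), so the result follows by combining the product case with equalizer-preservation; the latter, being a finite limit, is handled by the same reindexing argument, now using that finite limits commute with ordinary filtered colimits. Case a) requires only the finite-product computation (finite products commute with filtered colimits directly) together with the nullary case above.
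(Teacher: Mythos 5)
Your argument is correct, but it takes a genuinely different route from the paper. The paper disposes of both cases in three lines by observing that $Sub_{\faktor{\mathcal{C}}{x}}\circ ({!}^{-1})^{op}$ preserves the relevant limits and that finite products, resp.\ $\lambda$-small limits, form a \emph{sound doctrine}, so that the left Kan extension along Yoneda automatically preserves them; Remark \ref{lantoset} then transports this back to $L^x$. What you have written is essentially the elementary unfolding of that soundness argument for these two doctrines: your key observation --- that a $\lambda$-small product of filtered colimits \emph{is} a filtered colimit over the product category $\prod_i K_i$ (rather than over a common diagonal, which is where the usual failure of products-commuting-with-filtered-colimits lives) --- is precisely the content of the soundness of the product doctrine. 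Your route costs more ink but buys self-containedness: it makes visible exactly which exactness properties of $\mathcal{C}$ are used (universality of the colimits, so that $x\times -$ preserves them, plus Theorem \ref{coeqforfree}), whereas the paper's proof outsources the combinatorics to the cited classification machinery.

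One step is glossed over too quickly. For equalizers you cannot run "the same reindexing argument" verbatim: presenting a parallel pair $\phi,\psi\colon F\rightrightarrows G$ of lex functors as a filtered colimit of parallel pairs of representables is not achieved by independently writing $F$ and $G$ as colimits of representables and multiplying index categories --- the two transformations must be realized compatibly and functorially along the new index. This is true, but it is a separate standard lemma (every finite diagram in a finitely accessible category is a filtered colimit of finite diagrams of presentable objects), or alternatively one can simply invoke the classical fact that the left Kan extension of a finite-limit-preserving (i.e.\ flat) functor out of a finitely cocomplete $\mathcal{C}^{op}$ is left exact. Either citation closes the gap; as written, the equalizer case is asserted rather than proved.
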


\begin{proof}
    The slice $\faktor{\mathcal{C}}{x}$ is a coherent category with finite disjoint coproducts, or a $(\lambda ,\omega )$-pretopos respectively. Both finite products and $\lambda $-small limits form a sound doctrine in the sense of \cite{classification}. Since $U\circ Sub_{\faktor{\mathcal{C}}{x}}\circ ({!}^{-1})^{op}$ preserves these limits, so does its left Kan extension $Lan_Y(U\circ Sub_{\faktor{\mathcal{C}}{x}}\circ ({!}^{-1})^{op}):\mathbf{Set}^{\mathcal{C}}\to \mathbf{Set}$. Remark \ref{lantoset} completes the proof.
\end{proof}

\begin{corollary}
    If $\mathcal{C}$ is a $(\lambda ,\omega )$-pretopos for $\lambda >\omega $ and $x\in \mathcal{C}$, then $L^x:\mathbf{Lex}(\mathcal{C},\mathbf{Set})\to \mathbf{DLat}$ preserves reduced products over $\lambda $-small index sets.
\end{corollary}

\begin{proof}
    If $I$ is a $\lambda $-small set, $\mathcal{F}:\mathcal{P}(I)\to 2$ is a filter, and $F_i:\mathcal{C}\to \mathbf{Set}$ are lex functors, then the reduced product $\faktor{\prod _iF_i}{\mathcal{F}}$ is given by $\colim _{\{J\in \mathcal{F}\}^{op}}\prod _{i\in J}F_i$. We can apply Proposition \ref{presfiltcolim} and Theorem \ref{preslambdalim}.
\end{proof}

\begin{proposition}
\label{l0presregmono}
    Let $\mathcal{C}$ be coherent, and $x\in \mathcal{C}$. Then $L^x:\mathbf{Lex}(\mathcal{C},\mathbf{Set})\to \mathbf{DLat}$ takes regular monomorphisms to injective maps.
\end{proposition}

\begin{proof}
    Every regular monomorphism in $\mathbf{Lex}(\mathcal{C},\mathbf{Set})$ is the filtered colimit of regular monos of the form $\widehat{z_i}\xrightarrow{-\circ p_i} \widehat{y_i}$ where $p_i$ is effective epi in $\mathcal{C}$ (see the proof of \cite[Theorem 12]{pure}). By Proposition \ref{presfiltcolim} it is enough to see that these are preserved. But it is clear that if $p$ is effective epi then $(id_x\times p)^{-1}:Sub_{\mathcal{C}}(x\times z)\to Sub_{\mathcal{C}}(x\times y)$ is injective.
\end{proof}

\begin{corollary}
    If $\mathcal{C}$ is a coherent category in which monomorphisms are regular (e.g.~a pretopos) and $x\in \mathcal{C}$, then $L^x:\mathbf{Lex}(\mathcal{C},\mathbf{Set})\to \mathbf{DLat}$ preserves monomorphisms.
\end{corollary}

\begin{proof}
    By Example \cite[Remark 16]{pure} in $\mathbf{Lex}(\mathcal{C},\mathbf{Set})$ monomorphisms are regular.
\end{proof}

In what follows we will focus on $L^1(F)$, the Lindenbaum-Tarski algebra of closed formulas with parameters from $F$. (This was studied in \cite[Section 6]{interpret}, some of those arguments are extended here, from $\mathbf{Set}$-valued models to global sections of $Sh(B)$-valued models.) We start by giving an explicit description.

\begin{proposition}
\label{siminL0}
    We write $[u\hookrightarrow x^a]$ for the element of $L^1(F)$ represented by $u\hookrightarrow x$, living in $Sub_{\mathcal{C}}(x)$, indexed with $(x,a)$. Then $u\hookrightarrow x^a \sim v\hookrightarrow y^b$ iff there exists $w \hookrightarrow x\times y$ such that $(a,b)\in Fw $ and $w \cap (u\times y) =w \cap (x\times v)$.
\end{proposition}

\begin{proof}
    As $(\int F)^{op}$ is $\kappa $-filtered, we get $u\hookrightarrow x^a \sim v\hookrightarrow y^b$ iff there is $w\hookrightarrow z$, $c\in Fz$ and arrows $f:z\to x$, $g:z\to y$ with $Ff(c)=a$ and $Fg(c)=b$, such that $w=f^{-1}u=g^{-1}v$. Taking the effective epi-mono factorization of $(f,g):z\to x\times y$, and using that if $h$ is effective epi then $h^{-1}$ is injective, completes the argument. 
\end{proof}

\begin{example}
\label{noultrapr}
Given a distributive lattice $K$, every coherent functor $K\to \mathbf{Set}$ factors as $K\xrightarrow{P} \mathbf{2}\to \mathbf{Set}$, hence yields a prime filter $p=P^{-1}(1)\in Spec(K)$. Then $L^1P=\faktor{K}{p}=\faktor{K}{\sim _{p}}$ where $a\sim _{p}b$ iff $\exists x\in p: x\cap a =x\cap b$.
\end{example}


\begin{remark}
We can repeat the construction in the language of model theory. Let $T\subseteq \Sigma _{\omega \omega }^g $ be a coherent theory and $M$ be a model of $T$. Then $L^1M$ is the following distributive lattice: its underlying set is $\faktor{\{\langle \varphi (x_1,\dots x_k), (a_i\in M_{X_i}) \rangle \}}{\sim }$ where $\varphi $ is a positive existential $\Sigma $-formula, $\vec{a}$ is a tuple of elements having the appropriate sorts, and $\langle \varphi (x_1,\dots x_k), (a_i\in M_{X_i}) \rangle \sim \langle \psi (y_1,\dots y_l), (b_i\in M_{Y_i}) \rangle $ iff there's a positive existential $\chi (x_1,\dots x_k,y_1,\dots y_l)$ such that $T\models \varphi \wedge \chi \Leftrightarrow \psi \wedge \chi $ and $M\models \chi (a_1,\dots a_k,b_1,\dots b_l)$. Here $\vec{x}$ and $\vec{y}$ are disjoint (so if $y_i=x_j$ then we replace it with a fresh variable $y_i'$). 
\begin{multline*}
    [\langle \varphi (x_1,\dots x_k), (a_i\in M_{X_i}) \rangle ]\wedge [\langle \psi (y_1,\dots y_l), (b_i\in M_{Y_i}) \rangle ]= \\ [\langle \varphi \wedge \psi (x_1,\dots x_k,y_1,\dots y_l), (a_1,\dots a_k,b_1,\dots b_l) \rangle ]
\end{multline*}
and similarly for $\vee $. ($\vec{x}$ and $\vec{y}$ are still disjoint, the top element is $[\top ]$, the bottom element is $[\bot ]$. $[\langle \varphi (\vec{x}),\vec{a} \rangle ]\sim [\top ]$ iff $M\models \varphi (\vec{a})$.)
\end{remark}

Now we extend the definition of $L^1$ by allowing the domain to vary.

\begin{definition}
Define $\mathbf{Coh}_e\downarrow ^{lex}\mathbf{Set}$ to be the strict 2-category whose objects are lex functors $\mathcal{C}\to \mathbf{Set}$ (where $\mathcal{C}$ is a non-fixed small coherent category), the 1-cells are pairs $(M:\mathcal{C}\to \mathcal{D}, \eta _M:F\Rightarrow GM)$ where $M$ is a coherent functor and $\eta _M$ is an arbitrary natural transformation, and the 2-cells are elementary natural transformations $\alpha :M\Rightarrow N$ making 

\[\begin{tikzcd}
	{\mathcal{C}} \\
	&&&& {\mathbf{Set}} \\
	\\
	{\mathcal{D}}
	\arrow[""{name=0, anchor=center, inner sep=0}, "F", from=1-1, to=2-5]
	\arrow[""{name=1, anchor=center, inner sep=0}, "M"{description}, curve={height=18pt}, from=1-1, to=4-1]
	\arrow[""{name=2, anchor=center, inner sep=0}, "N"{description}, curve={height=-18pt}, from=1-1, to=4-1]
	\arrow[""{name=3, anchor=center, inner sep=0}, "G"', from=4-1, to=2-5]
	\arrow["{\eta _N}", curve={height=-6pt}, shorten <=7pt, shorten >=7pt, Rightarrow, from=0, to=3]
	\arrow["{\eta _M}"', curve={height=6pt}, shorten <=7pt, shorten >=7pt, Rightarrow, from=0, to=3]
	\arrow["\alpha", shorten <=7pt, shorten >=7pt, Rightarrow, from=1, to=2]
\end{tikzcd}\]
commutative. We have an evident forgetful 2-functor $\mathbf{Coh}_e\downarrow ^{lex}\mathbf{Set}\to \mathbf{Coh}_e$, to the 2-category of small coherent categories, coherent functors, and elementary natural transformations.
\end{definition}

\begin{definition}
\label{ldef}
We define a 2-functor $L^1:\mathbf{Coh}_e\downarrow ^{lex}\mathbf{Set} \to \mathbf{DLat}$ as follows: Given a 2-cell as pictured above, we take $L^1F$ to be the colimit
\[
colim((\int F)^{op}\to \mathcal{C}^{op}\xrightarrow{Sub_{\mathcal{C}}} \mathbf{DLat})
\]
and $L^1\eta _M =L^1\eta _N$ to be the map induced by the cocone, whose leg at $(x,a)\in (\int F)^{op}$ is 

\[
\adjustbox{scale=1}{
\begin{tikzcd}
	& {Sub_{\mathcal{C}}(x)^a} \\
	{Sub_{\mathcal{D}}(Mx)^{\eta _{M,x}a}} && {Sub_{\mathcal{D}}(Nx)^{\eta _{N,x}a}} \\
	& {colim((\int G)^{op}\to \mathcal{D}^{op}\xrightarrow{Sub_{\mathcal{D}}}\mathbf{DLat})}
	\arrow["M"', from=1-2, to=2-1]
	\arrow["N", from=1-2, to=2-3]
	\arrow["{\alpha _x^{-1}}"', from=2-3, to=2-1]
	\arrow[from=2-1, to=3-2]
	\arrow[from=2-3, to=3-2]
\end{tikzcd}
}
\]
\end{definition}

Indeed, we got an extension of our previous definition:

\begin{remark}
    $L^1$ restricted to the fiber over $\mathcal{C} $ is $L^1_{\mathcal{C}}$. That is, given $\eta :F\Rightarrow G$ in $\mathbf{Lex}(\mathcal{C},\mathbf{Set})$ we have $L^1_{\mathcal{C}}(\eta )=L^1((1_{\mathcal{C}},\eta ))$ (as their restrictions to $Sub_{\mathcal{C}}(x)^a$ coincide, namely it is $Sub_{\mathcal{C}}(x)^a \xrightarrow{id} Sub_{\mathcal{C}}(x)^{\eta _x a}\to L^1_{\mathcal{C}}G$).
\end{remark}

\begin{proposition}
\label{injsurj}
Let 
\[\begin{tikzcd}
	{\mathcal{C}} \\
	&& {\mathbf{Set}} \\
	{\mathcal{D}}
	\arrow[""{name=0, anchor=center, inner sep=0}, "F", from=1-1, to=2-3]
	\arrow["M"', from=1-1, to=3-1]
	\arrow[""{name=1, anchor=center, inner sep=0}, "G"', from=3-1, to=2-3]
	\arrow["\eta", shorten <=4pt, shorten >=4pt, Rightarrow, from=0, to=1]
\end{tikzcd}\]
be a 1-cell in $\mathbf{Coh}_e\downarrow ^{lex}\mathbf{Set} $. 

If $M$ is conservative, full wrt.~subobjects, and $\eta $ is elementary then $L^1((M,\eta ))$ is injective.

If $\mathcal{D}$ is $(\lambda , \omega )$-coherent, $M$ is full wrt.~subobjects and $<\lambda $-covers its codomain (i.e.~for $z\in \mathcal{D}$ there's a $<\lambda $ effective epimorphic family $(h_i:M x_i\to z)_{i}$), $G$ is $(\lambda ,\omega )$-coherent and $\eta $ is pointwise surjective then $L^1((M,\eta ))$ is surjective.
\end{proposition}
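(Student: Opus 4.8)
The plan is to work with the explicit presentation of $L^1$ from Proposition \ref{siminL0}. Unwinding Definition \ref{ldef} in the case of a $1$-cell, the map $L^1((M,\eta))$ sends $[u\hookrightarrow x^a]$ (with $a\in Fx$) to $[Mu\hookrightarrow Mx^{\eta_x(a)}]$, using that $M$ is coherent (so it preserves products and binary meets of subobjects) and that $\eta_x\colon Fx\to GMx$ provides the new index. I read ``$M$ full wrt subobjects'' as: the induced monotone map $Sub_{\mathcal{C}}(x)\to Sub_{\mathcal{D}}(Mx)$ is surjective for each $x$; and I use that lex functors commute with $F(x\times y)\cong Fx\times Fy$, so that $\eta_{x\times y}(a,b)=(\eta_x a,\eta_y b)$.

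For \textbf{injectivity}, suppose $[Mu\hookrightarrow Mx^{\eta_x a}]=[Mv\hookrightarrow My^{\eta_y b}]$. By Proposition \ref{siminL0} applied to $G$ there is a subobject $W\hookrightarrow Mx\times My=M(x\times y)$ with $(\eta_x a,\eta_y b)\in GW$ and $W\cap(Mu\times My)=W\cap(Mx\times Mv)$. First I lift $W$ through fullness, writing $W=Mw$ for some $w\hookrightarrow x\times y$. Then, because $\eta$ is elementary (Definition \ref{elementarynattr}), the naturality square of $\eta$ at $w\hookrightarrow x\times y$ is a pullback, so $(a,b)\in Fw$ is equivalent to $(\eta_x a,\eta_y b)=\eta_{x\times y}(a,b)\in GMw=GW$, which we have. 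Finally, since $M$ preserves products and meets, the displayed equation is $M(w\cap(u\times y))=M(w\cap(x\times v))$; a conservative coherent functor reflects the subobject order (if $Mp\le Mq$ then the mono $p\wedge q\hookrightarrow p$ goes to an isomorphism, hence is one), so it reflects equality, giving $w\cap(u\times y)=w\cap(x\times v)$. By Proposition \ref{siminL0} for $F$ this witnesses $[u\hookrightarrow x^a]=[v\hookrightarrow y^b]$.

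For \textbf{surjectivity}, take a class $[V\hookrightarrow z^c]$ in $L^1 G$ with $c\in Gz$. Since $M$ $<\lambda$-covers its codomain there is a $<\lambda$ effective epimorphic family $(h_i\colon Mx_i\to z)_i$, and since $G$ is $(\lambda,\omega)$-coherent it sends this to a $<\lambda$ effective epimorphic, hence jointly surjective, family $(Gh_i\colon GMx_i\to Gz)_i$. Thus $c=Gh_i(c_i)$ for some $i$ and some $c_i\in GMx_i$; put $x=x_i$, $h=h_i$. Pointwise surjectivity of $\eta$ gives $a\in Fx$ with $\eta_x(a)=c_i$, and fullness wrt subobjects gives $u\hookrightarrow x$ with $Mu=h^{-1}(V)$. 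As $h$ is a morphism $(Mx,c_i)\to(z,c)$ in $\int G$, compatibility of the colimit cocone for $L^1 G$ yields $[V\hookrightarrow z^c]=[h^{-1}(V)\hookrightarrow Mx^{c_i}]=[Mu\hookrightarrow Mx^{\eta_x a}]=L^1((M,\eta))([u\hookrightarrow x^a])$.

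The preservation facts (that $M$ and $G$ commute with the relevant products, meets, images and covers) are immediate from coherence, so the hypotheses are genuinely consumed in only a few places: fullness lifts $W$ to $w$ (resp.\ $h^{-1}(V)$ to $u$), elementarity transports the membership $(a,b)\in Fw$ across $M$, conservativity reflects the defining equation of $\sim$, and in the surjective case $(\lambda,\omega)$-coherence with the size bound turns the covering family into joint surjectivity on global sections. The step I expect to need the most care is the injectivity argument: elementarity must be applied to the \emph{lifted} mono $w$, which only exists after fullness has been used, and conservativity enters through the small lemma that a conservative coherent functor reflects the order on subobjects; sequencing these two correctly is the main subtlety.
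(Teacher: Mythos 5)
Your proposal is correct and follows essentially the same route as the paper's proof: injectivity by lifting the witnessing subobject $\chi\hookrightarrow Mx\times My$ through fullness, transporting the membership via elementarity, and reflecting the intersection equation via conservativity; surjectivity by pulling back along a member of the covering family and using pointwise surjectivity of $\eta$ plus fullness. The only difference is that you spell out two small lemmas the paper leaves implicit (that a conservative coherent functor reflects the order on subobjects, and the cocone-compatibility step identifying $[V\hookrightarrow z^c]$ with $[h^{-1}V\hookrightarrow Mx^{c_i}]$), which is fine.
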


\begin{proof}
Injectivity: $[(u\xhookrightarrow{i} x)^a]$ is mapped to $[(Mu\xhookrightarrow{Mi} Mx)^{\eta _x a}]$. Assume $(Mu\xhookrightarrow{Mi} Mx)^{\eta _x a}\sim (Mv\xhookrightarrow{Mj} My)^{\eta _y b}$ that is, there's $\chi \hookrightarrow Mx\times My$ with $\chi \cap (Mx\times Mv)= \chi \cap (Mu\times My)$ and with $(\eta _x(a),\eta _y(b))\in G\chi $. As $M$ is full wrt.~subobjects, $\chi =Mw$, as $M$ is conservative $w\cap (x\times v)= w\cap (u\times y)$ and as $\eta $ is elementary $(a,b)\in Fw$.

Surjectivity: As $M$ is full wrt.~subobjects and $\eta $ is pointwise surjective it is enough to prove that any $(r \hookrightarrow z)^d$ is equivalent to some $( r'\hookrightarrow Mx)^{d'}$. The effective epimorphic family $(h_i:Mx_i\to z)_i$ is mapped to a covering by $G$, hence for some $i$ there's $d'\in GMx_i$ with $Gh_i(d')=d$. Then $(h_i^{-1}r \hookrightarrow Mx_i)^{d'}$ is equivalent to our original element. 
\end{proof}

\begin{remark}
    By \cite[Theorem 15]{pure} this gives a second proof for Proposition \ref{l0presregmono} when $x=1$. 
\end{remark}

\section{Positively closed models}

In positive model theory, one of the central concepts is that of a positively closed model. We now extend and study this notion in the setting of topos-valued positive model theory.

In \cite[Section 4]{bvalued} we proved that a $Sh(B)$-valued model $M:\mathcal{C}\to Sh(B)$ can be reconstructed from its global sections $\Gamma M:\mathcal{C}\to \mathbf{Set}$. So one might hope that model-theoretic properties of $M$ are reflected in the algebraic properties of our lattice $L^1(\Gamma M)$ from the previous section. 

In Theorems \ref{posclequiv}, \ref{strongposclequiv} and \ref{Bcompletestrongposcl} we will give characterizations of positively closed and strongly positively closed $Sh(B)$-valued models in terms of $L^1(\Gamma M)$, relying on the results from Section 3. 

In Examples \ref{Open(Q)} and \ref{Closed(X)} we will show that positively closed, but not strongly positively closed $Sh(B)$-valued models (of coherent theories) exist, unlike in the $\mathbf{Set}=Sh(2)$-valued case.

\begin{definition}
    Let $\kappa $ be weakly compact, $\mathcal{C}$ a $\kappa $-coherent category, $\mathcal{E}$ a $\kappa $-topos, and $M:\mathcal{C}\to \mathcal{E}$ a $\kappa $-coherent functor. We say that $M$ is \emph{positively closed} if given any $N:\mathcal{C}\to \mathcal{E}$ $\kappa $-coherent functor, every $M\Rightarrow N$ natural transformation is elementary. 
\end{definition}

\begin{remark}
    One can also define positively closed models more generally, with $\kappa $-sites in place of $\kappa $-coherent categories, see \cite{thesis}. See also \cite[Definition 4.1]{exclosedkamsma}.
\end{remark}

We will compare the above global notion to the following local one:

\begin{definition}
\label{strongposcldef}
   Let $\kappa $ be weakly compact, $\mathcal{C}$ a $\kappa $-coherent category, $\mathcal{E}$ a $\kappa $-topos, and $M:\mathcal{C}\to \mathcal{E}$ a $\kappa $-coherent functor. We say that $M$ is \emph{strongly positively closed} if for any $u\hookrightarrow x$ mono in $\mathcal{C}$ we have $Mx=Mu \cup \bigcup _{v\hookrightarrow x : v\cap u=\emptyset } Mv$.
\end{definition}

\begin{remark}
    In \cite[Definition 5.1]{exclosedkamsma} the authors define strongly positively closed inverse image functors $F^*:\mathcal{E}_1\to \mathcal{E}_2$ wrt.~a fixed generating set $\chi $ of $\mathcal{E}_1$ and a fixed collection of subobjects which generate the subobjects of objects in $\chi $ under taking unions. As in our case $\mathcal{E}=Sh(\mathcal{C},\tau _{\kappa -coh})$ for some $\kappa $-coherent $\mathcal{C}$, the generating set of objects and the generating set of subobjects are the ones from $\mathcal{C}$. 
\end{remark}

\begin{remark}
    Assume that $\kappa $ is weakly compact, $B$ is a $\kappa$-coherent Boolean algebra, $\mathcal{C}$ and $M:\mathcal{C}\to Sh(B,\tau _{\kappa -coh})$ are $\kappa$-coherent. 
    
    Then $M$ is strongly positively closed iff for any $u\hookrightarrow x$ mono in $\mathcal{C}$, $b\in B$ and $s\in Mx(b)$ we can write $b=b_1\vee b_2$ s.t.~$\restr{s}{b_1}\in Mu(b_1)$ and $\restr{s}{b_2}\in Mv(b_2)$ where $v\hookrightarrow x$ is such that $u\cap v=\emptyset $.
\end{remark}

\begin{remark}
    When $B=2$ strongly positively closed becomes the following separation property for definable sets:

    Given $\mathcal{C}$, $M:\mathcal{C}\to \mathbf{Set}$ $\kappa$-coherent, $M$ is strongly positively closed iff for any $u\hookrightarrow x$ and $a\in Mx\setminus Mu$ there is $v\hookrightarrow x$ s.t.~$u\cap v=\emptyset $ and $a\in Mv$.

    When $\mathcal{C}$ is the syntactic category of $T\subseteq L_{\omega \omega }^g$, this gives back the classical definition of a strongly positively closed model; whenever $M\not \models \varphi (\vec{a})$ for some positive existential formula $\varphi (\vec{x})$, we can find a positive existential formula $\psi (\vec{x})$, such that $M\models \psi (\vec{a})$ and $T\vdash \varphi (\vec{x})\wedge \psi (\vec{x})\to \bot $.
\end{remark}

\begin{proposition}
     Assume that $\kappa $ is weakly compact, $\mathcal{E}$ is a $\kappa $-topos, $\mathcal{C}$ and $M:\mathcal{C}\to \mathcal{E}$ are $\kappa $-coherent. If $M$ is strongly positively closed then it is positively closed. (See also \cite[Proposition 5.6]{exclosedkamsma})
\end{proposition}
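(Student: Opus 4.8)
The plan is to unwind the definition of positively closed directly. Fix an arbitrary $\kappa$-coherent functor $N:\mathcal{C}\to\mathcal{E}$ and a natural transformation $\alpha:M\Rightarrow N$; I must show that the naturality square at every mono $u\hookrightarrow x$ in $\mathcal{C}$ is a pullback. Since $\mathcal{E}$ is a topos, the pullback of $Nu\hookrightarrow Nx$ along $\alpha_x:Mx\to Nx$ is the inverse-image subobject $\alpha_x^{-1}(Nu)\hookrightarrow Mx$, so the task reduces to the equality of subobjects $Mu=\alpha_x^{-1}(Nu)$ in $Sub_{\mathcal{E}}(Mx)$. Commutativity of the naturality square says exactly that $\alpha_x$ restricted to $Mu$ factors through $Nu$, i.e.\ $Mu\leq\alpha_x^{-1}(Nu)$ holds for free; the entire content is therefore the reverse inequality.

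To get the reverse inequality I would feed in strong positive closure, which gives $Mx=Mu\cup\bigcup_{v\cap u=\emptyset}Mv$ as the top subobject of $Sub_{\mathcal{E}}(Mx)$. Intersecting both sides with $\alpha_x^{-1}(Nu)$ and using that $Sub_{\mathcal{E}}(Mx)$ is a frame (finite meets distribute over arbitrary unions in a Grothendieck topos), I obtain
\[
\alpha_x^{-1}(Nu)=\big(\alpha_x^{-1}(Nu)\cap Mu\big)\cup\bigcup_{v\cap u=\emptyset}\big(\alpha_x^{-1}(Nu)\cap Mv\big).
\]
The first term equals $Mu$ by the inequality already established, so everything comes down to showing that each term of the union over $v$ vanishes.

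The key computation is that $\alpha_x^{-1}(Nu)\cap Mv=\emptyset$ whenever $v\cap u=\emptyset$. Writing $S=\alpha_x^{-1}(Nu)\cap Mv\hookrightarrow Mx$, the composite $S\hookrightarrow Mx\xrightarrow{\alpha_x}Nx$ factors through $Nu$ (since $S\leq\alpha_x^{-1}(Nu)$) and also through $Nv$ (since $S\leq Mv$ and, by naturality of $\alpha$ at $v\hookrightarrow x$, $\alpha_x$ carries $Mv$ into $Nv$); hence it factors through $Nu\cap Nv$. But $N$ is coherent, so $Nu\cap Nv=N(u\cap v)=N\emptyset=\emptyset$, producing a morphism $S\to\emptyset$, and strictness of the initial object in a topos forces $S=\emptyset$. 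Plugging this back in yields $\alpha_x^{-1}(Nu)=Mu$, so the square is a pullback; as $N,\alpha,u\hookrightarrow x$ were arbitrary, $M$ is positively closed. The argument is essentially formal once the frame structure of $Sub_{\mathcal{E}}$ is invoked; the only genuinely topos-theoretic inputs — and hence the points to state carefully — are the distributivity of intersection over the unions furnished by strong positive closure and the strictness of the initial object used to collapse $S\to\emptyset$. This parallels \cite[Proposition 5.6]{exclosedkamsma}.
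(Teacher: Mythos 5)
Your proof is correct and is essentially the paper's argument written out in full: the paper's one-line proof (``both rows are disjoint unions, hence both squares are pullbacks'') is exactly your computation that $\alpha_x^{-1}(Nu)$ meets each $Mv$ trivially because $Nu\cap Nv=N(u\cap v)=\emptyset$, so distributing the cover $Mx=Mu\cup\bigcup_v Mv$ over $\alpha_x^{-1}(Nu)$ leaves only $Mu$. No gaps; the frame distributivity and strictness of the initial object are precisely the implicit inputs the paper relies on.
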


\begin{proof}
    As in
\[\begin{tikzcd}
	Mu && Mx && {\bigcup_{v:\ v\cap u=\emptyset } Mv} \\
	Nu && Nx && {\bigcup_{v:\ v\cap u=\emptyset } Nv}
	\arrow[hook, from=1-1, to=1-3]
	\arrow["{\alpha _u}"', from=1-1, to=2-1]
	\arrow["{\alpha _x}", from=1-3, to=2-3]
	\arrow[hook', from=1-5, to=1-3]
	\arrow[from=1-5, to=2-5]
	\arrow[hook, from=2-1, to=2-3]
	\arrow[hook', from=2-5, to=2-3]
\end{tikzcd}\]
    both rows are disjoint unions, it follows that both squares are pullbacks.
\end{proof}

\begin{proposition}
\label{psi2}
    Let $\kappa $ be weakly compact, $\mathcal{C}$ a $\kappa$-coherent category with $\kappa $-small disjoint coproducts, $B$ a $\kappa$-coherent Boolean algebra, and $M:\mathcal{C}\to Sh(B,\tau _{\kappa -coh})$ a $\kappa$-coherent functor. Then there is an injective $\kappa $-coherent homomorphism $\psi  :B\to L^1(\Gamma M)$ given by $b\mapsto [1\sqcup \emptyset \hookrightarrow 1\sqcup 1^{(\restr{*}{b},\restr{*}{\neg b})}]$ which is an isomorphism onto the Boolean algebra of complemented elements $ L^1(\Gamma M)^{\neg }$.
\end{proposition}

\begin{proof}
    This is \cite[Proposition 4.3]{bvalued}.
\end{proof}

In particular, if $M:\mathcal{C}\to \mathbf{Set}$ is $\kappa$-coherent, then $L^1(M)$ has no non-trivial complemented elements. 

In this section our goal is to characterize model-theoretic properties of models $M:\mathcal{C}\to Sh(B)$, in terms of this homomorphism $\psi $.

\begin{theorem}
\label{LGMbalg}
    Let $\kappa $ be weakly compact, $B$ a $\kappa$-coherent Boolean algebra, $\mathcal{C}$ a $\kappa$-coherent category with $\kappa $-small disjoint coproducts, and $M:\mathcal{C}\to Sh(B,\tau _{\kappa -coh})$ a $\kappa$-coherent functor. Then $M$ is strongly positively closed iff $L^1(\Gamma M)$ is a Boolean algebra.
\end{theorem}

\begin{proof}
    \fbox{$\Rightarrow $} We want $[u\hookrightarrow x^s]$ to be complemented. By the assumption we have $\top=b_1\vee b_2$ with $\restr{s}{b_1}\in Mu(b_1)$ and $\restr{s}{b_2}\in Mv(b_2)$ for some $v\hookrightarrow x$ with $u\cap v=\emptyset $. Therefore $(u\sqcup v, s) \hookrightarrow (x,s)$ is a map in $\int \Gamma M$, consequently $[u\hookrightarrow x^s]=[u\hookrightarrow u\sqcup v^s]$ which is complemented.

    \fbox{$\Leftarrow $} Fix $u\hookrightarrow x$, $b\in B$, $s\in Mx(b)$. Write $\widetilde{s}=(\restr{s}{b},\restr{*}{\neg b})$ for the global section in $M(x\sqcup 1)(\top )$ glued from $s\in Mx(b)\hookrightarrow M(x\sqcup 1)(b)$ and $*\in M1(\neg b)\hookrightarrow M(x\sqcup 1)(\neg b)$. By our assumption $[u\sqcup \emptyset \hookrightarrow x\sqcup 1^{(\restr{s}{b},\restr{*}{\neg b})}]$ is complemented. That is, for some $h:r\to x\sqcup 1$ which hits $\widetilde{s}$, the pullback $h^{-1}(u\sqcup \emptyset )$ is complemented. By writing $r_1$ for $h^{-1}(x\sqcup \emptyset )$ and $r_2$ for $h^{-1}(\emptyset \sqcup 1)$ we have a pullback square
\[\begin{tikzcd}
	{u\sqcup \emptyset } && {x\sqcup 1 } \\
	\\
	{u^*\sqcup \emptyset } && {r_1\sqcup r_2 }
	\arrow[hook, from=1-1, to=1-3]
	\arrow[from=3-1, to=1-1]
	\arrow[hook, from=3-1, to=3-3]
	\arrow["{h_1\sqcup h_2}"', from=3-3, to=1-3]
\end{tikzcd}\]
such that $\widetilde{s}$ has a preimage $t\in M(r_1\sqcup r_2)(\top )$ and $u^*\sqcup \emptyset $ is complemented.

It follows easily that $\restr{t}{b}\in Mr_1(b)$ and $\restr{t}{\neg b}\in Mr_2(b)$. It is also clear that $u^*\hookrightarrow r_1$ is complemented, say $(u^*)^c=w$. Write $v$ for its image under $h_1$:
\[\begin{tikzcd}
	u && x && v \\
	{u^*} && {r_1} && w
	\arrow[""{name=0, anchor=center, inner sep=0}, hook, from=1-1, to=1-3]
	\arrow[hook', from=1-5, to=1-3]
	\arrow[from=2-1, to=1-1]
	\arrow[""{name=1, anchor=center, inner sep=0}, hook, from=2-1, to=2-3]
	\arrow["{h_1}"', from=2-3, to=1-3]
	\arrow[two heads, from=2-5, to=1-5]
	\arrow[hook', from=2-5, to=2-3]
	\arrow["pb"{description}, draw=none, from=0, to=1]
\end{tikzcd}\]
Since $Mr_1=Mu^* \sqcup Mw$ there is $b=b_1\sqcup b_2$ with $\restr{t}{b_1}\in Mu^*(b_1)$ and $\restr{t}{b_2}\in Mw(b_2)$, hence $\restr{s}{b_1}\in Mu(b_1)$ and $\restr{s}{b_2}\in Mv(b_2)$. It suffices to prove $u\cap v=\emptyset $, but that follows as $\exists _{h_1}(h_1^{-1}u\cap w)=u\cap \exists _{h_1}(w)$ (where $\exists _{h_1}:Sub_{\mathcal{C}}(r_1)\to Sub_{\mathcal{C}}(x)$ is taking the image along $h_1$).
    
\end{proof}

We have arrived to the following characterization of strongly positively closed models:

\begin{theorem}
\label{strongposclequiv}
    Let $\kappa $ be weakly compact, $B$ a $\kappa$-coherent Boolean algebra, $\mathcal{C}$ a $\kappa$-coherent category with $\kappa $-small disjoint coproducts, and $M:\mathcal{C}\to Sh(B,\tau _{\kappa -coh})$ a $\kappa$-coherent functor. Then the following are equivalent:
\begin{enumerate}
    \item $M$ is strongly positively closed.
    \item $L^1(\Gamma M)$ is a Boolean algebra.
    \item $L^1(\Gamma M)\cong B$.
\end{enumerate}
    
\end{theorem}

\begin{proof}
    By Proposition \ref{psi2} and Theorem \ref{LGMbalg}.
\end{proof}

Our next goal is to find a similar characterization of positive closedness in terms of the lattice $L^1(\Gamma M)$. Note that since $L^1(\Gamma M)$ is computed from the model $M$ itself, such a characterization implies that the global property of being positively closed is equivalent to a local condition.

First we make a general observation:

\begin{theorem}
    Let $\kappa $ be weakly compact, $\mathcal{C}$ a $\kappa$-coherent category with $\kappa $-small disjoint coproducts, and $F:\mathcal{C}\to \mathbf{Set}$ be $\kappa $-regular. Let $B$ denote the Boolean algebra of complemented subobjects in $L^1F$. Then for any $b\in B$ and $x_i\in L^1F$ ($i<\gamma <\kappa $), $b\leq \bigcup _i x_i$ implies $b=\bigcup _i b_i$ such that $b_i\in B$, $b_i\leq x_i$. In other terms $\downarrow :L^1F\to (Id_{\kappa }(B),\subseteq )$ defined by $x\mapsto \{b: b\leq x\}$ is a $\kappa$-homomorphism.
\end{theorem}

\begin{proof}
    We have 
\[\begin{tikzcd}
	B && {Sh(B,\tau _{\kappa -coh})} && {Sh(B,\tau _{\kappa -coh})} \\
	& {=} \\
	{\mathcal{C}_F} && {Sh(\mathcal{C}_F,\tau _{\kappa -coh})} && {Sh(\mathcal{C}_F,\tau _{\kappa -coh})}
	\arrow["{\iota }"', from=1-1, to=3-1]
	\arrow["Y", hook, from=1-1, to=1-3]
	\arrow["Y"', hook, from=3-1, to=3-3]
	\arrow["{\iota ^*}"', from=1-3, to=3-3]
	\arrow[""{name=0, anchor=center, inner sep=0}, Rightarrow, no head, from=1-3, to=1-5]
	\arrow[""{name=1, anchor=center, inner sep=0}, Rightarrow, no head, from=3-3, to=3-5]
	\arrow["{\iota _*}"', from=3-5, to=1-5]
	\arrow["\eta", shorten <=9pt, shorten >=9pt, Rightarrow, from=0, to=1]
\end{tikzcd}\]
 By \cite[Corollary 5.13]{bvalued} $\eta $ is elementary. Therefore since $\eta _{\widehat{\top }}$ is iso each $\eta _{\widehat{b}}$ is iso, so  $\restr{\eta }{B}$ is an isomorphism. 

As all three maps $Y$, $\iota $ and $\iota _*Y$ are $\kappa$-coherent (the last one by \cite[Theorem 4.1.ii)]{bvalued}), we get a factorization
\[\begin{tikzcd}
	B && {L^1F} && {Id_{\kappa }(B)=Sub_{Sh(B)}(1)}
	\arrow["{\iota }"', from=1-1, to=1-3]
	\arrow["r"', dashed, from=1-3, to=1-5]
	\arrow["Y", curve={height=-18pt}, from=1-1, to=1-5]
\end{tikzcd}\]

Recall that a subsheaf of $B(-,\top )$ is the same as a $\kappa $-closed ideal (downward closed subset, closed under $\kappa $-small unions, contains $\bot $), and $Y$ is the same as the canonical map $\downarrow :B\to (Id_{\kappa }(B),\subseteq )$. Given a subobject $x$ of $1\in \mathcal{C}_F$, the composite $\iota _*Y$ takes it to $\restr{\mathcal{C}_F(-,x)}{B^{op}}$, hence also $r=\downarrow $.
\end{proof}

\begin{remark}
    An alternative proof is to observe that for $\beta =tp $ we get $\mathcal{C}_{M,\beta }=\mathcal{C}_{\Gamma M}$ and then use Proposition \ref{extendsfromB}.
\end{remark}

We proved that if $\kappa $ is weakly compact, $B$ is a $\kappa$-coherent Boolean algebra, $\mathcal{C}$ is a $\kappa$-coherent category with $\kappa $-small disjoint coproducts, $M:\mathcal{C}\to Sh(B)$ is $\kappa $-coherent then $\downarrow :B\to Id_{\kappa }(B)$ factors as  $B\hookrightarrow L^1(\Gamma M)\xrightarrow{\downarrow }Id_{\kappa }(B)$. Now we shall prove that such ``retractions'' correspond to natural transformations $\alpha :M\Rightarrow S_{\mathcal{C}}^{Sh(B)}$ and that $\downarrow $ corresponds to $tp$. (For much simpler proofs in the $B=2$ and $\kappa =\omega $ case see \cite[Remark 6.7 and Proposition 6.17]{interpret}.)

\begin{theorem}
\label{L0classifnattr}
    Let $\kappa $ be weakly compact, $\mathcal{C}$ a $\kappa$-coherent category with $\kappa $-small disjoint coproducts, $B$ a $\kappa$-coherent Boolean algebra, and $M:\mathcal{C}\to Sh(B,\tau _{\kappa -coh})$ a $\kappa$-coherent functor. Then there is a bijection between $r:L^1(\Gamma M)\to Id_{\kappa }(B)$ $\kappa $-homomorphisms satisfying $\restr{r}{B}=\downarrow $ and $\alpha : M\Rightarrow S_{\mathcal{C}}^{Sh(B)}$ natural transformations. 
    
    Given $r$, the corresponding map has components 
    \[\alpha ^r_{x,b}:Mx(b)\to \mathbf{DLat}_{\kappa ,\kappa }(Sub_{\mathcal{C}}(x),Id_{\kappa }(\downarrow b))
    \]
    taking $s\in Mx(b)$ to $u\hookrightarrow x \ \mapsto \ r([u\sqcup \emptyset \hookrightarrow x\sqcup 1^{(\restr{s}{b},\restr{*}{\neg b})}])$. 
    
    Conversely, given $\alpha $, the corresponding homomorphism $r^{\alpha }$ takes $[u\hookrightarrow x^s]$ to $\alpha _{x,\top }(s)(u)$.

\end{theorem}

\begin{proof}
First we prove that $r\mapsto \alpha ^r$ is well-defined:

 $\alpha ^r_{x,b}(s)(u)=r([u\sqcup \emptyset \hookrightarrow x\sqcup 1^{(\restr{s}{b},\restr{*}{\neg b})}])\subseteq \ \downarrow b$: this follows as
 \[
[u\sqcup \emptyset \hookrightarrow x\sqcup 1^{(\restr{s}{b},\restr{*}{\neg b})}]\leq [x\sqcup \emptyset \hookrightarrow x\sqcup 1^{(\restr{s}{b},\restr{*}{\neg b})}]=[1\sqcup \emptyset \hookrightarrow 1\sqcup 1^{(\restr{*}{b},\restr{*}{\neg b})}]
 \]
 and the right side is taken to $\downarrow b$, and $r$ is $\leq $-preserving.

 $\alpha ^r_{x,b}$ is a $\kappa $-homomorphism: this is immediate from $r$ being a $\kappa $-homomorphism.

    $\alpha ^r_x$ is a natural transformation: given $b'\leq b$ it follows that 
    \begin{multline*}
    [u\sqcup \emptyset \hookrightarrow x\sqcup 1 ^{(\restr{s}{b},\restr{*}{\neg b})}]\wedge [(1\sqcup \emptyset \hookrightarrow 1\sqcup 1 ^{(\restr{*}{b'},\restr{*}{\neg {b'}})}]= \\ =[u\sqcup \emptyset \sqcup \emptyset \sqcup \emptyset \hookrightarrow x\sqcup 1 \sqcup x\sqcup 1 ^{(\restr{s}{b'},-,\restr{s}{b\setminus b'},\restr{*}{\neg {b'}}) }]= [u\sqcup \emptyset \hookrightarrow x\sqcup 1 ^{(\restr{s}{b'},\restr{*}{\neg {b'}})}]
    \end{multline*}
    and hence
    \begin{multline*}
        r([u\sqcup \emptyset \hookrightarrow x\sqcup 1 ^{(\restr{s}{b},\restr{*}{\neg b})}])\cap \downarrow b' = \\ = r([u\sqcup \emptyset \hookrightarrow x\sqcup 1 ^{(\restr{s}{b},\restr{*}{\neg b})}]\wedge [(1\sqcup \emptyset \hookrightarrow 1\sqcup 1 ^{(\restr{*}{b'},\restr{*}{\neg {b'}})}])= r([u\sqcup \emptyset \hookrightarrow x\sqcup 1 ^{(\restr{s}{b'},\restr{*}{\neg {b'}})}])
    \end{multline*}
    which is precisely what we want.

    $\alpha ^r$ is a natural transformation: the commutativity of 
\[\begin{tikzcd}
	{Mx(b)} && {My(b)} \\
	\\
	{\mathbf{DLat}_{\kappa ,\kappa }(Sub_{\mathcal{C}}(x),Id_{\kappa }(\downarrow b))} && {\mathbf{DLat}_{\kappa ,\kappa }(Sub_{\mathcal{C}}(y),Id_{\kappa }(\downarrow b))}
	\arrow["{Mf_b}", from=1-1, to=1-3]
	\arrow["{\alpha _{x,b}^r}"', from=1-1, to=3-1]
	\arrow["{\alpha _{y,b}^r}", from=1-3, to=3-3]
	\arrow["{-\circ f^{-1}}", from=3-1, to=3-3]
\end{tikzcd}\]
    means that given $s\in Mx(b)$ and $v\hookrightarrow y$, we have $r([f^{-1}v\sqcup \emptyset \hookrightarrow x\sqcup 1^{(\restr{s}{b},\restr{*}{\neg b})}]) = r([v\sqcup \emptyset \hookrightarrow y\sqcup 1^{(\restr{Mf_b(s)}{b},\restr{*}{\neg b})}])$. This follows as the two elements of $L^1(\Gamma M)$ are equal, witnessed by $f\sqcup 1:x\sqcup 1\to y\sqcup 1$ (since $M(f\sqcup 1)_{\top }$ takes $(\restr{s}{b},\restr{*}{\neg b})$ to $(\restr{Mf_b(s)}{b},\restr{*}{\neg b})$ and the pullback of $v\sqcup \emptyset $ along $f\sqcup 1$ is $f^{-1}v\sqcup \emptyset $).

Now we prove that $\alpha \mapsto r^{\alpha }$ is well-defined:

    $r^{\alpha }$ is well-defined on equivalence classes: it suffices to check that $r^{\alpha }([f^{-1}v\hookrightarrow x^s])=r^{\alpha }([v\hookrightarrow y^{Mf_{\top }(s)}])$ (as these pairs generate the equivalence relation). That is, we need $\alpha _{y,\top }(Mf_{\top }(s))(v)=\alpha _{x,\top }(s)(f^{-1}(v))$ which is the naturality of $\alpha $ in $f$.

    $r^{\alpha }$ is a homomorphism: given $<\kappa $ many objects in $L^1(\Gamma M)$ we can represent them as $[u_i\hookrightarrow x^s]$ (since $(\int \Gamma M)^{op}$ is $\kappa $-filtered), and their meet/join is preserved since $\alpha _{x,\top }(s)$ is a homomorphism.

    $\restr{r^{\alpha }}{B}=\downarrow $: given $[1\sqcup \emptyset \hookrightarrow 1\sqcup 1^{(\restr{*}{b},\restr{*}{\neg b})}]$ the diagram 
\[\begin{tikzcd}
	{Id_{\kappa }(\downarrow b)} && {Id_{\kappa }(B)} && {Id_{\kappa }(\downarrow \neg b)} \\
	\\
	{Sub_{\mathcal{C}}(1\sqcup \emptyset )} && {Sub_{\mathcal{C}}(1\sqcup 1)} && {Sub_{\mathcal{C}}(\emptyset \sqcup 1)}
	\arrow[from=1-3, to=1-1]
	\arrow[from=1-3, to=1-5]
	\arrow["{\alpha _{1,b}(\restr{*}{b})}", from=3-1, to=1-1]
	\arrow["{\alpha _{1\sqcup 1,b}(\restr{*}{b},-)}"{description, pos=0.3}, color={rgb,255:red,128;green,128;blue,128}, from=3-3, to=1-1]
	\arrow["{\alpha _{1\sqcup 1,\top}((\restr{*}{b},\restr{*}{\neg b}))}"{description, pos=0.6}, from=3-3, to=1-3]
	\arrow["{\alpha _{1\sqcup 1,\neg b}(-,\restr{*}{\neg b})}"{description, pos=0.3}, color={rgb,255:red,128;green,128;blue,128}, from=3-3, to=1-5]
	\arrow[from=3-3, to=3-1]
	\arrow[from=3-3, to=3-5]
	\arrow["{\alpha _{1,\neg b}(\restr{*}{\neg b})}"', from=3-5, to=1-5]
\end{tikzcd}\]
    commutes, as $\alpha _{x,b}$ is natural both in $x$ and in $b$. Therefore $\alpha _{1\sqcup 1,\top }((\restr{*}{b},\restr{*}{\neg b}))(1\sqcup \emptyset )$ equals $\downarrow b$ (since its intersection with $\downarrow b$ is $\downarrow b$, and its intersection with $\downarrow \neg b$ is $\emptyset $).

    Now we prove that these maps are inverses to each other:
    
    $\alpha \mapsto r\mapsto \alpha $ yields identity: the $(x,b)$-component of the resulting natural transformation takes a section $s\in Mx(b)$ to the function $u\hookrightarrow x \ \mapsto \  \alpha _{x\sqcup 1,\top }(\restr{s}{b},\restr{*}{\neg b})(u\sqcup \emptyset )=\alpha _{x,b}(s)(u)$. 

    $r\mapsto \alpha \mapsto r$ yields identity: the resulting map takes $[u\hookrightarrow x^s]$ to $r([u\sqcup \emptyset \hookrightarrow x\sqcup 1 ^{(\restr{s}{\top },\restr{*}{\bot })}])=r([u\hookrightarrow x^s])$.
\end{proof}

\begin{remark}
\label{tpisdownarrow}
    In the above bijection $\downarrow :L^{1}(\Gamma M)\to Id_{\kappa }(B)$ corresponds to $tp:M\Rightarrow S_{\mathcal{C}}^{Sh(B)}$. Indeed, we need that for $b'\leq b$:
    \[
    [1\sqcup \emptyset \hookrightarrow 1\sqcup 1 ^{(\restr{*}{b'}, \restr{*}{\neg b'})}]\leq [u\sqcup \emptyset \hookrightarrow x\sqcup 1^{(\restr{s}{b},\restr{*}{\neg b})}]
    \]
    iff $\restr{s}{b'}\in Mu(b')$. This is clear: the inequality can be written as 
    \[
    [x\sqcup \emptyset \sqcup \emptyset \hookrightarrow x\sqcup x\sqcup 1^{(\restr{s}{b'}, \restr{s}{b\setminus b'}, \restr{*}{\neg b})}]\leq  [u\sqcup u \sqcup \emptyset \hookrightarrow x\sqcup x\sqcup 1^{(\restr{s}{b'}, \restr{s}{b\setminus b'}, \restr{*}{\neg b})}]
    \]
    which holds iff we are allowed to take the pullback of these subobjects along $u\sqcup x\sqcup 1\hookrightarrow x\sqcup x\sqcup 1$, that is: iff $\restr{s}{b'}\in Mu(b')$.
\end{remark}

\begin{theorem}
\label{posclequiv}
     Let $\kappa $ be weakly compact, $B$ a $\kappa$-coherent Boolean algebra, $\mathcal{C}$ a $\kappa$-coherent category with $\kappa $-small disjoint coproducts, $M:\mathcal{C}\to Sh(B,\tau _{\kappa -coh})$ a $\kappa$-coherent functor. Assume that $Sh(B,\tau _{\kappa -coh})$ can realize $\kappa $-types (cf.~Definition \ref{realtypes}). Then the following are equivalent:
     \begin{enumerate}
         \item $M $ is positively closed.
         \item There is a unique natural transformation $M\Rightarrow S_{\mathcal{C}}^{Sh(B)}$ (namely: $tp_M$).
         \item There is a unique $\kappa $-homomorphism $L^1(\Gamma M)\to Id_{\kappa }(B)$ whose restriction to $B=L^1(\Gamma M)^{\neg }$ coincides with $\downarrow $ (namely: $\downarrow $).
     \end{enumerate}
\end{theorem}

\begin{proof}
    By Proposition \ref{elemifftpcommutes} $M$ is positively closed iff for each $N:\mathcal{C}\to Sh(B)$ $\kappa $-lex $E$-preserving and each $\alpha :M\Rightarrow N$ natural transformation $tp_M=tp_N\circ \alpha$, so $2\Rightarrow 1$ is clear. For the converse assume that $\beta :M\Rightarrow S_{\mathcal{C}}$ does not equal $tp_M$. Then by Proposition \ref{tpisminimal} $tp_M\subsetneqq \beta $, and by Theorem \ref{factorbeta} there is a $\kappa$-coherent $N:\mathcal{C}\to Sh(B)$ and $\alpha :M\Rightarrow N$ with $\beta \subseteq tp_N\circ \alpha $. So $tp_M\neq tp_N\circ \alpha $ and hence $M$ is not positively closed.

    $2\Leftrightarrow 3$ follows from Theorem \ref{L0classifnattr}.
\end{proof}

\begin{remark}
    We don't need that $Sh(B,\tau _{\kappa -coh})$ can realize $\kappa $-types to get $2\Leftrightarrow 3 \Rightarrow 1$.
\end{remark}

\begin{corollary}
    Let $\kappa $ be strongly compact (including $\kappa =\omega $), $\mathcal{C}$ a $\kappa$-coherent category, and $M:\mathcal{C}\to \mathbf{Set}$ a $\kappa$-coherent functor. Then $M$ is positively closed iff it is strongly positively closed.
\end{corollary}

\begin{proof}
    We can assume that $\mathcal{C}$ has $\kappa $-small disjoint coproducts.
    
    By Theorem \ref{setrealtypes} both Theorem \ref{strongposclequiv} and Theorem \ref{posclequiv} applies. If $L^1(\Gamma M)\neq 2$ then there are more than one $\kappa $-complete prime filters on it.
\end{proof}

Our next example shows that this equivalence fails in the $Sh(B)$-valued case, even for $\kappa =\omega $.

\begin{example}
\label{Open(Q)}
    Let $K$ be the distributive sub-lattice of $Open(\mathbb{Q})$ formed by finite unions of intervals/half-lines. Write $B$ for the Boolean algebra of complemented elements (that is: finite unions of intervals/half-lines with irrational endpoints). (From now on let's call half-lines intervals too.)

    $M=\ \downarrow :K\to Id(B)\hookrightarrow Sh(B,\tau _{\omega -coh})$ defined as $u\mapsto \{b\in B: b\subseteq u\}$ is a homomorphism. It suffices to prove that when $u$ and $v$ are both intervals then their union is preserved, and this is clear: if some irrational-endpoint $b$ is contained in $u\cup v$ then we can write $b$ as $b_1\cup b_2$ with $b_1\subseteq u$, $b_2\subseteq v$, $b_1,b_2$ complemented.

    $M$ is positively closed: Let $H:K\to Id(B)$ be a homomorphism and $\alpha : M \Rightarrow H$ be a natural transformation. In particular for each $u\in K$ we have $\alpha _u: M (u) \hookrightarrow H(u)$. When $b$ is complemented $M( b) \subseteq H(b)$ and $M(\neg b )\subseteq H(\neg b)$ implies that $\alpha _b$ is the identity. If for $u\in K$ $H(u)$ contains some irrational-endpoint interval which is not inside $u$ then it contains one which is disjoint from $u$, say $b$. We get $H(b)=\downarrow b \subseteq H(u)$ and $H(b)\cap H(u)=\emptyset $, contradiction. So $\alpha $ is identity.

    $M$ is not strongly positively closed: take $(-\infty ,p)\subseteq \mathbb{Q}$ with $p$ rational. If $M=\downarrow $ was strongly positively closed then we were be able to cover the irrational-endpoint interval $(p-\varepsilon ,p+\varepsilon )$ with two irrational-endpoint intervals, one lying inside $(-\infty ,p)$, the other being disjoint from it. This is not possible.
\end{example}

We close the section by showing that when $B$ is complete, $Sh(B,\tau _{\omega -coh})$-valued positively closed coherent functors admit an internal characterization. 

\begin{theorem}
\label{uniqueretract}
    Let $L$ be a distributive lattice and $B$ a sub-lattice which is a complete Boolean algebra. Take $x\in L$ and write $x^{-}=\bigcup \{b\in B : b\leq x\}$ and $x^{+}=\bigcap \{b\in B : b\geq x\}$. Then for any $r\in B $ with $x^{-}\leq r\leq x^{+}$ there is a retract map 
\[\begin{tikzcd}
	B & B \\
	L
	\arrow[Rightarrow, no head, from=1-1, to=1-2]
	\arrow[hook', from=1-1, to=2-1]
	\arrow["H"', from=2-1, to=1-2]
\end{tikzcd}\]
with $H(x)=r$.
\end{theorem}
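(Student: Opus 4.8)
The plan is to build $H$ in two stages: first define it on the sublattice $B\langle x\rangle\subseteq L$ generated by $B$ and $x$, arranging $H(x)=r$ and $H|_B=\mathrm{id}$, and then extend it to all of $L$ using that a complete Boolean algebra is an injective object in $\mathbf{DLat}$ (\cite[Theorem 3.2]{injdlat}, already invoked in the proof of Theorem \ref{Bvaltypesreal}). The inclusion $B\hookrightarrow L$ is a $\mathbf{DLat}$-morphism, so $\bot_B=\bot_L$ and $\top_B=\top_L$, a fact I will use freely. Note that the hypothesis $x^{-}\le r\le x^{+}$ is exactly forced: any homomorphism $H$ with $H|_B=\mathrm{id}$ sends every $b\in B$ with $b\le x$ to $b\le H(x)$ and every $b\ge x$ to $b\ge H(x)$, so necessarily $x^{-}\le H(x)\le x^{+}$; the content of the theorem is that the whole interval is attained.

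First I would record a normal form: since $B$ is closed under $\wedge,\vee$ and $x$ is a single new generator, distributivity shows every element of $B\langle x\rangle$ can be written as $b\vee(c\wedge x)$ with $b,c\in B$, and this set is closed under the lattice operations, with $(b_1\vee(c_1\wedge x))\vee(b_2\vee(c_2\wedge x))=(b_1\vee b_2)\vee((c_1\vee c_2)\wedge x)$ and $(b_1\vee(c_1\wedge x))\wedge(b_2\vee(c_2\wedge x))=(b_1\wedge b_2)\vee(((b_1\wedge c_2)\vee(b_2\wedge c_1)\vee(c_1\wedge c_2))\wedge x)$. I then define $H_0\colon B\langle x\rangle\to B$ by $H_0(b\vee(c\wedge x))=b\vee(c\wedge r)$, i.e.\ by substituting $r$ for $x$. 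Granting well-definedness, $H_0$ is a lattice homomorphism, since the join/meet formulas above are identical whether the formal generator is read as $x$ or as $r$; moreover $H_0|_B=\mathrm{id}$ (take $c=\bot$) and $H_0(x)=r$ (take $b=\bot$, $c=\top$).

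The crux is well-definedness of $H_0$, for which it suffices to prove monotonicity: if $b\vee(c\wedge x)\le b'\vee(c'\wedge x)$ in $L$ then $b\vee(c\wedge r)\le b'\vee(c'\wedge r)$. Meeting the hypothesis with the Boolean complement $\neg c'$ annihilates the term $c'\wedge x$ and yields $\neg c'\wedge c\wedge x\le b'$ and $\neg c'\wedge b\le b'$. From the first, writing $d=\neg c'\wedge c$, we get $x\le\neg d\vee b'\in B$, hence $x^{+}\le\neg d\vee b'$, so $r\le x^{+}$ gives $\neg c'\wedge c\wedge r=d\wedge r\le b'$; therefore $c\wedge r=(c'\wedge c\wedge r)\vee(\neg c'\wedge c\wedge r)\le(c'\wedge r)\vee b'$. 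Dually, meeting the hypothesis with $c'\wedge\neg b'$ shows $c'\wedge\neg b'\wedge b\le x$; this element lies in $B$, so it is $\le x^{-}\le r$, and being also $\le c'$ it is $\le c'\wedge r$, whence $c'\wedge b\le b'\vee(c'\wedge r)$. Combined with $\neg c'\wedge b\le b'$ this gives $b\le b'\vee(c'\wedge r)$, and together with the bound on $c\wedge r$ we conclude $b\vee(c\wedge r)\le b'\vee(c'\wedge r)$. Applying this in both directions turns equalities into equalities, so $H_0$ is well defined.

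Finally, the inclusion $B\langle x\rangle\hookrightarrow L$ is a monomorphism in $\mathbf{DLat}$, so injectivity of the complete Boolean algebra $B$ extends $H_0$ to a homomorphism $H\colon L\to B$; by construction $H|_B=\mathrm{id}$ and $H(x)=r$, which is the required retract. The main obstacle is the monotonicity computation of the previous paragraph: it is the only place where both bounds $x^{-}\le r$ and $r\le x^{+}$ (and the complementation of $B$) are genuinely used, and the delicate point is ensuring the two witnesses land below $b'\vee(c'\wedge r)$ rather than merely below $b'\vee r$, which is what forces the extra observation that they also sit below $c'$.
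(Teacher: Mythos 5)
Your proposal is correct and follows essentially the same route as the paper's proof: the normal form for the sublattice generated by $B$ and $x$, the substitution $x\mapsto r$ on that sublattice, a well-definedness check using Boolean complements in $B$ together with both bounds $x^{-}\leq r\leq x^{+}$, and finally the extension to $L$ via injectivity of complete Boolean algebras in $\mathbf{DLat}$. The only (cosmetic) difference is that you verify well-definedness by proving monotonicity of the substitution, whereas the paper checks the equality of the two images directly by expanding it into six inequalities; the underlying computation is the same.
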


\begin{proof}
    First note that the sub-lattice $\langle B\cup \{x\} \rangle \subseteq L$ generated by $B\cup \{x\}$ consists of elements of the form $(b_0 \wedge x )\vee b_1$. These are clearly there, and it is elementary to check that their set is closed under finite unions and finite intersections. 

    It suffices to prove that the function $\langle B\cup \{x\} \rangle \to B$ sending the element $(b_0 \wedge x )\vee b_1$ to $(b_0 \wedge r )\vee b_1$ is well-defined. Then it follows that it is a homomorphism, and since complete Boolean algebras are injective objects in the category of distributive lattices (see \cite[Theorem 3.2]{injdlat}), it extends further to $L$.

    So assuming $(b_0 \wedge x )\vee b_1 = (b_0' \wedge x )\vee b_1'$ we shall prove $(b_0 \wedge r )\vee b_1 = (b_0' \wedge r )\vee b_1'$. The latter is equivalent to the conjunction of
    \[
    ((b_0 \wedge r )\vee b_1 ) \wedge \neg \left((b_0' \wedge r )\vee b_1'\right)= (b_0\wedge r\wedge \neg b_0' \wedge \neg b_1') \vee (b_1\wedge \neg b_0' \wedge \neg b_1') \vee (b_1\wedge \neg r \wedge \neg b_1')=0
    \]
    and 
    \[
    \neg ((b_0 \wedge r )\vee b_1 ) \wedge \left((b_0' \wedge r )\vee b_1'\right)= (b_0'\wedge r\wedge \neg b_0 \wedge \neg b_1) \vee (b_1'\wedge \neg b_0 \wedge \neg b_1) \vee (b_1'\wedge \neg r \wedge \neg b_1)=0
    \]
    So what we have to prove is the set of inequalities
    \begin{enumerate}
        \item[] $r\leq \neg b_0 \vee b_0' \vee b_1'$
        \item[] $b_1\leq b_0' \vee b_1'$
        \item[] $b_1\wedge \neg b_1'\leq r$
        \item[] $r\leq \neg b_0' \vee b_0 \vee b_1$
        \item[] $b_1'\leq b_0 \vee b_1$
        \item[] $b_1'\wedge \neg b_1\leq r$
    \end{enumerate}
    All of these easily follow from the assumption if we write $x$ in place of $r$, but then just use $x^{-}\leq r \leq x^{+}$.
    
\end{proof}

\begin{theorem}
\label{Bcompletestrongposcl}
    Let $\mathcal{C}$ be a coherent category with finite disjoint coproducts, $B$ a complete Boolean algebra, and $M:\mathcal{C}\to Sh(B,\tau _{\omega -coh})$ a coherent functor. Then the following are equivalent:
\begin{enumerate}
    \item $M$ is positively closed.
    \item For every $u\hookrightarrow x$ in $\mathcal{C}$ and for every $s\in Mx(\top )$
    \begin{itemize}
        \item[-]there is a largest $b_0\in B$ with $\restr{s}{b_0}\in Mu(b_0)$, and
        \item[-]$\neg b_0=\bigcup \{b\in B : \exists v\hookrightarrow x : u\cap v=\emptyset \text{ and } \restr{s}{b}\in Mv(b) \}$.
    \end{itemize}
\end{enumerate}    
\end{theorem}

\begin{proof}
  $1\Rightarrow 2:$  By Theorem \ref{Bvaltypesreal} and by Theorem \ref{posclequiv} $M$ is positively closed iff there is a unique map
\[\begin{tikzcd}
	B & {Id(B)} \\
	{L^1(\Gamma M)}
	\arrow[hook, from=1-1, to=1-2]
	\arrow[hook', from=1-1, to=2-1]
	\arrow[dashed, from=2-1, to=1-2]
\end{tikzcd}\]
    making the triangle commute (namely: $\downarrow $). By Theorem \ref{uniqueretract} this implies that for every $\tau =[u\hookrightarrow x^s]\in L=L^1(\Gamma M)$ there's 
    \[\tau ^{-}=\bigcup \{b\in B : b\leq \tau \}=\tau ^{+}= \bigcap \{b \in B: b\geq \tau \}
    \]
    
    Moreover, since $B$ is an injective object in the category of distributive lattices, the mono $B\hookrightarrow L$ splits. If $M$ is positively closed then this implies that $\downarrow $ factors through $B$, and therefore for every $\tau \in L$ there is a $b_0\in B$ with $b\leq \tau $ iff $b\leq b_0$. Hence $\tau ^{-}=b_0$. (In particular $\tau ^{-}\leq \tau $. This is not obvious, as we form the union in $B$.)
    
    By Remark \ref{tpisdownarrow} $b\leq \tau $ iff $\restr{s}{b}\in Mu(b)$. So $b_0$ is the largest element in $B$ satisfying $\restr{s}{b}\in Mu(b)$. Moreover its complement is $\bigcup \{b: b\cap \tau =\emptyset \}$. 
    
    We are left to prove that $[u\hookrightarrow x^s]\cap [1 \sqcup \emptyset \hookrightarrow 1\sqcup 1 ^{(\restr{*}{b},\restr{*}{\neg b})}]=\emptyset $ iff there is a $v\hookrightarrow x$ with $v\cap u=\emptyset $ and $\restr{s}{b}\in Mv(b)$. This is a standard computation: represent the elements in question as $[u\sqcup u\hookrightarrow x\sqcup x ^{(\restr{s}{b},\restr{s}{\neg b})}]$ and  $[x\sqcup \emptyset \hookrightarrow x\sqcup x ^{(\restr{s}{b},\restr{s}{\neg b})}]$. Their intersection is $[u\sqcup \emptyset \hookrightarrow x\sqcup x ^{(\restr{s}{b},\restr{s}{\neg b})}]$ which is zero iff there is $w\hookrightarrow x\sqcup x$ such that $(\restr{s}{b},\restr{s}{\neg b})\in Mw(\top )$ and $w\cap (u\sqcup \emptyset )=\emptyset $. $w\hookrightarrow x\sqcup x$ is of the form $w_1\sqcup w_2\hookrightarrow x\sqcup x$ and we can take $v=w_1$.

    $2\Rightarrow 1:$ Since every retract map $H$ satisfies $\tau ^{-}\leq H(\tau )\leq \tau ^{+}$, if the equality holds then there's at most one $H$. But at least one exists ($\downarrow $). So $M$ is positively closed iff for every $\tau \in L$: $\tau ^{-}=\tau ^{+}$. But $2$ implies that $\tau ^{-}=b_0=\neg (\bigcup \{b: b\cap \tau =\emptyset \})=\tau ^{+}$.
\end{proof}

\begin{theorem}
\label{counterexdies}
    Let $\mathcal{C}$ be coherent with finite disjoint coproducts, $B$  a complete Boolean algebra, and $M:\mathcal{C}\to Sh(B,\tau _{\omega -coh})$ a coherent functor. Assume that $M$ is positively closed. Then $\widetilde{M}:\mathcal{C}\xrightarrow{M}Sh(B,\tau _{\omega -coh})\xrightarrow{id^*}Sh(B,\tau _{can})$ is strongly positively closed.
\end{theorem}

\begin{proof}
   First note that $id^*$ is just sheafification wrt.~$\tau _{can}$. Fix $s:\widehat{b}\to \widetilde{M}x=Mx^{\#}$. There is a cover $(b_i\to b)_i$ with each $s_i:\widehat{b_i}\to \widehat{b}\xrightarrow{s}Mx^{\#}$ factoring through $Mx\to Mx^{\#}$ (see \cite[Lemma 5.2]{presheaftype}). By Theorem \ref{Bcompletestrongposcl} for each $i$ there is a cover $(b_{i,j}\to b_i)_j$ such that $(\restr{s_i}{b_{i}},\restr{*}{\neg b_{i}})\in M(x\sqcup 1)$ restricted to $b_{i,0}$ lies in $M(u\sqcup \emptyset )(b_{i,0})\subseteq M(x\sqcup 1)(b_{i,0})$, and when restricted to $b_{i,j}$ with $j\neq 0$, it lies in some $M(v\sqcup 1)(b_{i,j})\subseteq M(x\sqcup 1)(b_{i,j})$ where $v\cap u=\emptyset $. That is, we have
\[
\adjustbox{scale=0.9}{
\begin{tikzcd}
	&& {\widehat{b}} && {Mx^{\#}} \\
	&&& {Mu^{\#}} && {Mv^{\#}} \\
	& {\widehat{b_i}} &&& Mx \\
	{\widehat{b_{i,0}}} &&& Mu && Mv & {Mv'} & \dots \\
	& {\widehat{b_{i,j}}}
	\arrow["s"{description}, from=1-3, to=1-5]
	\arrow[from=2-4, to=1-5]
	\arrow[from=2-6, to=1-5]
	\arrow[from=3-2, to=1-3]
	\arrow["{s_i}"{description}, dashed, from=3-2, to=3-5]
	\arrow[from=3-5, to=1-5]
	\arrow[from=4-1, to=3-2]
	\arrow[dashed, from=4-1, to=4-4]
	\arrow[from=4-4, to=2-4]
	\arrow[from=4-4, to=3-5]
	\arrow[from=4-6, to=2-6]
	\arrow[from=4-6, to=3-5]
	\arrow[from=4-7, to=3-5]
	\arrow[from=5-2, to=3-2]
	\arrow[curve={height=18pt}, dashed, from=5-2, to=4-6]
\end{tikzcd}
}
\]
showing that $Mx^{\#}=Mu^{\#}\cup \bigcup _{v:\ u\cap v=\emptyset} Mv^{\#}$.
   
\end{proof}

\begin{question}
\label{Bvalposcliffstrongly}
    Assume that $\mathcal{C}$ is a coherent category, $B$ is a complete Boolean algebra, and $M:\mathcal{C}\to Sh(B,\tau _{can})$ is a coherent functor. Is it true that $M$ is positively closed iff it is strongly positively closed?
\end{question}

Now we will give examples of $Sh(B,\tau _{\omega -coh})$-valued positively closed, not strongly positively closed models, where $B$ is complete. First we eliminate the necessity of disjoint coproducts.

\begin{proposition}
    Let $\mathcal{C}$ be a coherent category, and write $B=Sub_{\mathcal{C}}^{\neg }(1)$. Assume that $M:\mathcal{C}\xrightarrow{Y}Sh(\mathcal{C})\xrightarrow{i_*}Sh(B)$ is coherent. Write $\varphi :\mathcal{C}\to \widetilde{\mathcal{C}}$ for the completion of $\mathcal{C}$ under finite disjoint coproducts and $\widetilde{M}:\widetilde{\mathcal{C}}\to Sh(B)$ for the unique extension of $M$. Then we have isomorphisms of distributive lattices
\[\begin{tikzcd}
	& B \\
	{L_{\widetilde{\mathcal{C}}}^1(\Gamma \widetilde{M})} & {L_{\mathcal{C}}^1(\Gamma M)} & {Sub_{\mathcal{C}}(1)}
	\arrow["\psi"', from=1-2, to=2-1]
	\arrow[hook, from=1-2, to=2-3]
	\arrow["\cong", from=2-2, to=2-1]
	\arrow["{\cong }", from=2-3, to=2-2]
\end{tikzcd}\]
    making the diagram commute. ($\psi $ is the map from Proposition \ref{psi2}. It takes $b\in B$ to $[1\sqcup \emptyset \hookrightarrow 1\sqcup 1 ^{(\restr{*}{b},\restr{*}{\neg b})}]$.)
\end{proposition}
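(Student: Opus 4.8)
The plan is to handle the three displayed isomorphisms in turn and then verify that the outer triangle commutes, recovering $\psi$ as the composite. The crux will be the right-hand isomorphism $L^1_{\mathcal{C}}(\Gamma M)\cong L^1_{\widetilde{\mathcal{C}}}(\Gamma \widetilde M)$.

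First I would pin down $L^1_{\mathcal{C}}(\Gamma M)$. Since $M=i_*Y$ and global sections compose (the terminal geometric morphism to $\mathbf{Set}$ factors through any geometric morphism, so $\Gamma_{Sh(B)}\circ i_*=\Gamma_{Sh(\mathcal{C})}$), and $Y$ is fully faithful for the subcanonical topology $\tau_{coh}$ with $Y(1)=1$, one gets $\Gamma M=\mathcal{C}(1,-)$. Hence $\int \Gamma M = 1/\mathcal{C}$, which has initial object $(1,id_1)$, so $(\int \Gamma M)^{op}$ has a \emph{terminal} object and the colimit $colim_{(\int \Gamma M)^{op}}Sub_{\mathcal{C}}$ computing $L^1_{\mathcal{C}}(\Gamma M)$ is its value there, namely $Sub_{\mathcal{C}}(1)$. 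Explicitly the isomorphism $\theta\colon L^1_{\mathcal{C}}(\Gamma M)\xrightarrow{\cong}Sub_{\mathcal{C}}(1)$ sends $[u\hookrightarrow x^a]$ to $a^{-1}(u)$, with inverse $u'\mapsto [u'\hookrightarrow 1^{id_1}]$; its complemented elements are exactly $B=Sub_{\mathcal{C}}^{\neg}(1)$. This produces the bottom two objects, the lower isomorphism, and the inclusion $B\hookrightarrow Sub_{\mathcal{C}}(1)$.

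For the left-hand isomorphism I would use the comparison $1$-cell $(\varphi,id)\colon \Gamma M\to \Gamma\widetilde M$ in $\mathbf{Coh}_e\downarrow^{lex}\mathbf{Set}$ (legitimate because $\widetilde M\varphi=M$, so $\Gamma\widetilde M\varphi=\Gamma M$ and $\eta=id$ is elementary), giving $L^1((\varphi,id))\colon L^1_{\mathcal{C}}(\Gamma M)\to L^1_{\widetilde{\mathcal{C}}}(\Gamma\widetilde M)$, $[u\hookrightarrow x^a]\mapsto [u\hookrightarrow x^a]$ by Definition \ref{ldef}. Injectivity follows from Proposition \ref{injsurj}: the coproduct-completion embedding $\varphi$ is fully faithful, hence conservative, and full on subobjects, since by extensivity of $\widetilde{\mathcal{C}}$ a mono into an object $x\in\mathcal{C}$ decomposes as a finite disjoint union of $\mathcal{C}$-subobjects of $x$, whose union lies in $\mathcal{C}$. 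The surjectivity clause of Proposition \ref{injsurj} does \emph{not} apply, because $\Gamma\widetilde M$ is not coherent: global sections of $Mx\sqcup My$ over $\top$ can be ``mixed'' along a partition of $\top$ in $B$, so $\Gamma\widetilde M$ fails to preserve coproducts. Establishing surjectivity by hand is the main obstacle. I would take a class $[\tilde u\hookrightarrow \tilde y^{\tilde a}]$, write $\tilde y=\bigsqcup_i x_i$ and $\tilde u=\bigsqcup_i u_i$ (extensivity), and describe $\tilde a$ by a finite disjoint partition $\top=\bigvee_j b_j$ with $b_j\in B$ together with maps $s_j\colon b_j\to x_{i_j}$. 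Using the folding morphism $\bigsqcup_j x_{i_j}\to \bigsqcup_i x_i$ in $\int\Gamma\widetilde M$ I reduce to the case of one component per block. Each $s_j$ is automatically monic (subterminal domain), so I can apply Proposition \ref{siminL0} with $w=\bigsqcup_j \mathrm{im}(s_j)$ and $u'=\bigvee_j s_j^{-1}(u_j)\in Sub_{\mathcal{C}}(1)$: the condition $\tilde a\in \widetilde M w$ holds, and the required $w\cap\tilde u=w\cap\pi^{-1}(u')$ reduces componentwise to $s_j^{-1}(u_j)=b_j\wedge u'$, which is immediate from $b_j\wedge b_k=\bot$ for $k\neq j$. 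Hence $[\tilde u\hookrightarrow \tilde y^{\tilde a}]=[u'\hookrightarrow 1^*]$ is in the image, and $L^1((\varphi,id))$ is an isomorphism.

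Finally, for the commuting triangle I would trace $b\in B$ through $B\hookrightarrow Sub_{\mathcal{C}}(1)\xrightarrow{\theta^{-1}}L^1_{\mathcal{C}}(\Gamma M)\xrightarrow{L^1((\varphi,id))}L^1_{\widetilde{\mathcal{C}}}(\Gamma\widetilde M)$, landing on $[b\hookrightarrow 1^*]$, and then apply Proposition \ref{siminL0} once more with $w=b\sqcup\neg b\hookrightarrow 1\sqcup 1$: here $\tilde a\in\widetilde M w$ holds and both intersections equal $b\sqcup\emptyset$, so $[b\hookrightarrow 1^*]=[1\sqcup\emptyset\hookrightarrow 1\sqcup 1^{(\restr{*}{b},\restr{*}{\neg b})}]=\psi(b)$. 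This is exactly the map of Proposition \ref{psi2}; since $\psi$ is there shown to be injective onto the complemented elements, everything is consistent, and the left isomorphism carries $B$ isomorphically onto $L^1_{\widetilde{\mathcal{C}}}(\Gamma\widetilde M)^{\neg}$. I expect the partition/folding argument for surjectivity to be the only genuinely delicate point; the rest is bookkeeping with Definition \ref{ldef} and Proposition \ref{siminL0}.
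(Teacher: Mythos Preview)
Your proposal is correct and follows the same overall strategy as the paper: use Proposition \ref{injsurj} for injectivity of $L^1((\varphi,id))$, establish surjectivity by hand using the specific form $M=i_*Y$, and verify the triangle via the equivalence $[b\hookrightarrow 1^*]=[1\sqcup\emptyset\hookrightarrow 1\sqcup 1^{(\restr{*}{b},\restr{*}{\neg b})}]$.

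There are two organizational differences worth noting. First, your identification $L^1_{\mathcal{C}}(\Gamma M)\cong Sub_{\mathcal{C}}(1)$ via $\Gamma M=\mathcal{C}(1,-)$ and the initial object of $\int\Gamma M$ is a clean shortcut the paper does not take; the paper instead extracts this isomorphism as a byproduct of the surjectivity computation (every class in $L^1_{\widetilde{\mathcal{C}}}(\Gamma\widetilde M)$ reduces to some $[u'\hookrightarrow 1^*]$, then a separate check that $u\mapsto[u\hookrightarrow 1^*]$ is injective). Second, for surjectivity the paper produces an explicit arrow $(1,*)\xrightarrow{\bigsqcup s_i}(\bigsqcup x_i,s)$ in $\int\Gamma\widetilde M$ and pulls back, whereas you invoke Proposition \ref{siminL0} with the witness $w=\bigsqcup_j\mathrm{im}(s_j)$; these are equivalent mechanisms. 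Your folding reduction is unnecessary, since a global section of a finite coproduct in $Sh(B)$ already determines a canonical partition $\top=\bigvee_i b_i$ indexed by the summands (via pullback of the complemented summand inclusions), but it does no harm.
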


\begin{proof}
    By \cite[Theorem 8.4.2]{makkai} $\varphi $ is conservative, full wrt.~subobjects and any object of $\widetilde{\mathcal{C}}$ is the finite disjoint coproduct of objects coming from $\mathcal{C}$. To simplify notation, we may assume that $\varphi $ is injective on objects and therefore that $\widetilde{M}\varphi =M$. 
    
    By Proposition \ref{injsurj} the homomorphism $L^1((\varphi ,id)):L^1_{\mathcal{C}}(\Gamma M)\to L^1_{\widetilde{\mathcal{C}}}(\Gamma \widetilde{M})$, sending $[u\hookrightarrow x^s]$ to $[\varphi u\hookrightarrow \varphi x^s]$ is injective. We prove that it is surjective as well. 

    We will write $x$ instead of $\varphi x$. Take $[u_1\sqcup \dots \sqcup u_n\hookrightarrow x_1\sqcup \dots \sqcup x_n ^{s\in \widetilde{M}(\bigsqcup x_i)(\top )}]\in L^1(\Gamma \widetilde{M})$. $s$ corresponds to a map $s':\widehat{\top }\to \bigsqcup Mx_i $ which we can write as $\bigsqcup \widehat{b_i}\xrightarrow{\bigsqcup s'_i} \bigsqcup Mx_i $. By the definition of $M$ each (corresponding) $s_i\in Mx_i(b_i)$ is a $\mathcal{C}$-map $s_i:b_i\to x_i$. Note that $M(s_i)=s_i\circ - : M(b_i)=\restr{\mathcal{C}(-,b_i)}{B}=\widehat{b_i} \to M(x)=\restr{\mathcal{C}(-,x_i)}{B}$ sends $id_{b_i}$ to $s_i$. Therefore we have a commutative square 
    
\[\begin{tikzcd}
	{\widehat{b_i}=M(b_i)} && {M(x_i)} \\
	\\
	{\bigsqcup \widehat{b_i}=\widehat{\top}} && {\bigsqcup M(x_i)=\widetilde{M}(\bigsqcup x_i)}
	\arrow["{M(s_i)}", from=1-1, to=1-3]
	\arrow[from=1-1, to=3-1]
	\arrow[from=1-3, to=3-3]
	\arrow["{s'}"', from=3-1, to=3-3]
\end{tikzcd}\]
meaning that $\widetilde{M}(\bigsqcup s_i):\widetilde{M}(\bigsqcup b_i)=\widehat{\top }\to  \widetilde{M}(\bigsqcup x_i )$ takes $id_{\top }$ to $s$. Hence $(1 ,*)\xrightarrow{\bigsqcup s_i } (\bigsqcup x_i ,s)$ is a map in $(\int \Gamma \widetilde{M})$, and by taking pullback along it we get
\[
[u_1\sqcup \dots \sqcup u_n\hookrightarrow x_1\sqcup \dots \sqcup x_n \ ^{s}]= [(u_1\cap b_1)\cup \dots \cup (u_n\cap b_n)\hookrightarrow 1^*]
\]

This proves that $L^1((\varphi ,id))$ is an isomorphism and that the map $Sub_{\mathcal{C}}(1)\to L^1_{\mathcal{C}}(\Gamma M)$ defined by $u\mapsto [u\hookrightarrow 1^*]$ is surjective. But it is also injective: if some $w\hookrightarrow 1$ satisfies that $*\in \Gamma M(1)$ lies in $\Gamma M(w)$ then $id: 1\to 1$ factors through $w\hookrightarrow 1$ and therefore $w=1$. In particular $w\cap u=w\cap v$ implies $u=v$ and this is sufficient by Proposition \ref{siminL0}. Finally, the commutativity of the triangle follows by $[b\hookrightarrow 1^*]=[1\sqcup \emptyset \hookrightarrow 1\sqcup 1 ^{(\restr{*}{b},\restr{*}{\neg b})}]$ (we can take pullback along $b\sqcup \neg b \to 1\sqcup 1$).

\end{proof}

\begin{corollary}
\label{Yi*poscl}
    $\mathcal{C}, B=Sub^{\neg }_{\mathcal{C}}(1), M$ as above. Then $M$ is strongly positively closed iff $Sub_{\mathcal{C}}(1)$ is a Boolean algebra. 
    
    If $B$ is complete then $M$ is positively closed iff for every $u\in Sub_{\mathcal{C}}(1)$ there is a largest $b\leq u$ (with $b\in B$), moreover $b=\bigcap \{b'\in B: b'\geq u \}$ (in $B$).
\end{corollary}

\begin{proof}
    It is elementary to check that an arbitrary model $N:\mathcal{C}\to Sh(B)$ is (strongly) positively closed iff its extension $\widetilde{N}:\widetilde{\mathcal{C}}\to Sh(B)$ is (strongly) positively closed. Then the claim follows by Theorem \ref{strongposclequiv} and Theorem \ref{Bcompletestrongposcl}.
\end{proof}

\begin{example}
\label{Closed(X)}
    Let $X$ be an extremally disconnected Stone space. So it has the following properties: it is compact, Hausdorff, has a clopen basis and the closure of an open set is (cl)open. In other terms the interior of a closed set is clopen.

    Write $Closed(X)$ for the distributive lattice of closed sets in $X$ (with the usual operations). The sub-lattice (Boolean algebra) of complemented elements is $Clopen(X)$, which is complete (see e.g.~\cite[Theorem 39]{halmos}). 

    We claim that $int:\ Closed(X)\to Clopen(X)$ is a homomorphism. The only non-trivial thing to check is that $int(Z_1\cup Z_2)=int(Z_1)\cup int(Z_2)$ when $Z_1,Z_2$ are closed. $\supseteq $ is obvious. Also, since $int (Z_1\cup Z_2)\setminus Z_2$ is an open subset of $Z_1$ it must be contained in $int(Z_1)$, similarly $int (Z_1\cup Z_2)\setminus Z_1 \subseteq int (Z_2)$. Hence $int (Z_1\cup Z_2)\setminus (int (Z_1) \cup int(Z_2) )$ is a clopen subset of $Z_1\cap Z_2$. We got that 
    \[
    int (Z_1\cup Z_2)\setminus (int (Z_1) \cup int(Z_2) )\subseteq int (Z_1) \cup int(Z_2)
    \]
    meaning that the left hand side is $\emptyset$.

    This model $int:Closed(X)\to Clopen(X)\subseteq Sh(Clopen(X))=Sh(X)$ is positively closed. (Sheaves are taken wrt.~the finite union topology.) To see this, by Corollary \ref{Yi*poscl} we have to check that $int(Z)$ is the largest clopen set contained in all clopens that contain $Z$. This is clear, since $Z^c$ is the union of clopens, so $Z$ is the (set-theoretic) intersection of all clopens containing it, therefore $int(Z)$ must be their intersection in $Clopen(X)$.

    If $X$ is not discrete then $Clopen(X)\neq Closed(X)$. So in this case $int$ is positively closed while it is not strongly positively closed.
\end{example}

\section{Appendix: the proof of Theorem \ref{factorbeta}}

\begin{proposition}
\label{diagfilt}
    Let $\kappa $ be weakly compact, $\mathcal{C}$ a $\kappa$-coherent category with $\kappa $-small disjoint coproducts, $B$ a $\kappa$-coherent Boolean algebra, $M:\mathcal{C}\to Sh(B,\tau _{\kappa -coh})$ a $\kappa$-coherent functor, and $\beta :M\Rightarrow S_{\mathcal{C}}=S_{\mathcal{C}}^{Sh(B)}$ a natural transformation. 
    
    Let $Diag(M,\beta )$ be the following category: its objects are pairs $(u\hookrightarrow x,s)$ where $u\hookrightarrow x$ is a subobject in $\mathcal{C}$, $s\in Mx(\top )$ and $\beta _{x,\top }(s)(u)=\widehat{\top} =Id_{\kappa }(B)$. A morphism $(u\hookrightarrow x,s)\to (v\hookrightarrow y,s')$ is a map $f:x\to y$ such that $\restr{f}{u}$ factors through $v$ and $Mf_{\top }(s)=s'$. 
    
    Then $Diag(M,\beta )^{op}$ is $\kappa $-filtered.
\end{proposition}

\begin{proof}
It is non-empty, $(1\hookrightarrow 1, *\in M1(\top ))$ is a terminal object in $Diag(M,\beta )$.

    A cone (in $Diag(M,\beta )$) over $<\kappa $ objects $(u_i\hookrightarrow x_i ,\  s_i\in Mx_i(\top ))_{i<\gamma <\kappa }$ is given by the product:

\[\begin{tikzcd}
	{\prod _i u_i} & {\prod _i x_i} & {\prod _i Mx_i (\top )} & {\langle s_i \rangle } \\
	{u_j} & {x_j} & {Mx_j(\top )} & {s_i}
	\arrow[hook, from=1-1, to=1-2]
	\arrow[from=1-1, to=2-1]
	\arrow["{\pi _j}", from=1-2, to=2-2]
	\arrow["\ni"{description}, draw=none, from=1-3, to=1-4]
	\arrow[from=1-3, to=2-3]
	\arrow[shorten <=2pt, shorten >=2pt, maps to, from=1-4, to=2-4]
	\arrow[hook, from=2-1, to=2-2]
	\arrow["\ni"{description}, draw=none, from=2-4, to=2-3]
\end{tikzcd}\]
Indeed, we have to check $\beta _{\prod x_i , \top}(\langle  s_i\rangle _i)(\prod u_i)=\widehat{\top }$. Note that $\prod u_i = u_0\times u_1\times \dots =(u_0\times x_1\times x_2\times  \dots )\cap (x_0\times u_1\times x_2\times \dots )\cap \dots =\bigcap _i \pi _i^{-1}(u_i)$. Since $\beta _{\prod x_i , \top}(\langle  s_i\rangle _i)$ preserves $\kappa $-small meets it suffices to check $\beta _{\prod x_i , \top}(\langle  s_i\rangle _i)(\pi _j^{-1}(u_j))=\widehat{\top }$. By naturality we have

\[\begin{tikzcd}
	{M(\prod _i x_i)(\top)} && {Mx_j(\top )} \\
	\\
	{\mathbf{Coh}_{\kappa ,\kappa }(Sub_{\mathcal{C}}(\prod _i x_i),Id_{\kappa }(B))} && {\mathbf{Coh}_{\kappa ,\kappa }(Sub_{\mathcal{C}}(x_j),Id_{\kappa }(B))}
	\arrow["{M(\pi _j)_{\top}}", from=1-1, to=1-3]
	\arrow["{\beta _{\prod x_i, \top }}"', from=1-1, to=3-1]
	\arrow["{\beta _{x_j, \top }}", from=1-3, to=3-3]
	\arrow["{-\circ \pi _j^{-1}}"', from=3-1, to=3-3]
\end{tikzcd}\]
meaning $\beta _{x_j,\top }(s_j)$ equals $Sub_{\mathcal{C}}(x_j)\xrightarrow{\pi _j^{-1}} Sub_{\mathcal{C}}(\prod _i x_i)\xrightarrow{\beta _{\prod x_i ,\top }(\langle s_i \rangle )}Id_{\kappa }(B)$ and since the composite takes $u_j$ to $\widehat{\top }$ the second map takes $\pi _j^{-1}(u_j)$ to $\widehat{\top }$.

To find a cone over a general $\kappa $-small diagram, it suffices to find a cone over the objects (say $(f_i:x\to y_i)_i$), then to find an arrow $x'\to x$ which equalizes $x\xrightarrow{f_i}y_i\xrightarrow{h}y_j$ and $f_j$ for each arrow $h$ in the diagram. So we are left with the case of a joint equalizer.

Given $<\kappa $ parallel pairs $(f_i,g_i:(u\hookrightarrow x,s)\to (v_i\hookrightarrow y_i,t_i))_{i<\gamma <\kappa }$ there is a cone

\[\begin{tikzcd}
	{u'} & e & {Me(\top )} & s \\
	u & x & {Mx(\top )} & s \\
	{v_i} & {y_i} & {My_i(\top )} & {t_i}
	\arrow[""{name=0, anchor=center, inner sep=0}, hook, from=1-1, to=1-2]
	\arrow[hook', from=1-1, to=2-1]
	\arrow["eq", hook', from=1-2, to=2-2]
	\arrow[hook', from=1-3, to=2-3]
	\arrow["{\ni }"{description}, draw=none, from=1-4, to=1-3]
	\arrow[""{name=1, anchor=center, inner sep=0}, hook, from=2-1, to=2-2]
	\arrow[shift right, dashed, from=2-1, to=3-1]
	\arrow[shift left, dashed, from=2-1, to=3-1]
	\arrow["{f_i}"', shift right, from=2-2, to=3-2]
	\arrow["{g_{i}}", shift left, from=2-2, to=3-2]
	\arrow[shift right, from=2-3, to=3-3]
	\arrow[shift left, from=2-3, to=3-3]
	\arrow["\ni"{description}, draw=none, from=2-4, to=2-3]
	\arrow[hook, from=3-1, to=3-2]
	\arrow["\ni"{description}, draw=none, from=3-4, to=3-3]
	\arrow["pb"{description}, draw=none, from=0, to=1]
\end{tikzcd}\]
(where $pb$ means pullback, $eq$ means joint equalizer). Note that $u'\hookrightarrow u$ equalizes the dashed arrows as $v_i\hookrightarrow y_i$ is mono. So it remains to check $\beta _{e,\top }(s)(u')=\widehat{\top }$. As before, naturality implies that $\beta _{x,\top }(s)$ can be written as $Sub_{\mathcal{C}}(x)\xrightarrow{-\cap e} Sub_{\mathcal{C}}(e)\xrightarrow{\beta _{e,\top }(s)}Id_{\kappa }(B)$, and since the composite takes $u$ to $\widehat{\top}$, the second map takes $u'$ to $\widehat{\top }$.

\end{proof}

We will construct $\mathcal{C}_{M,\beta }$ as a $\kappa $-filtered (2,1)-colimit of slices along $Diag(M,\beta )$. An explicit construction of filtered (2,1)-colimits of categories can be found in \cite{2filtered}. An overview was given in \cite[Theorem 3.4]{bvalued}.

\begin{proposition}
\label{CMbeta}
     $\kappa ,\mathcal{C},B,M,\beta $ as before. 
     
     Write $\mathcal{C}_{M,\beta }$ for the $\kappa $-filtered $(2,1)$-colimit $\colim _{(u\hookrightarrow x,s)\in Diag(M,\beta )^{op}} \faktor{\mathcal{C}}{u}$ and write $\varphi :\mathcal{C}\to \mathcal{C}_{M,\beta }$ for the cocone map. Then $\mathcal{C}_{M,\beta }$ is $\kappa$-coherent with $\kappa $-small disjoint coproducts and $\varphi $ is $\kappa$-coherent.

     Moreover, the map $\psi: B\to \mathcal{C}_{M,\beta }$ sending $b\hookrightarrow \top$ to the equivalence class of
\[\begin{tikzcd}
	{1\sqcup \emptyset } && {1\sqcup 1} \\
	& {1\sqcup 1}
	\arrow[hook, from=1-1, to=1-3]
	\arrow[from=1-1, to=2-2]
	\arrow[from=1-3, to=2-2]
\end{tikzcd}\]
    living in the slice $\faktor{\mathcal{C}}{1\sqcup 1}$ indexed by $(1\sqcup 1 \hookrightarrow 1\sqcup 1, (\restr{*}{b},\restr{*}{\neg b}))$, is $\kappa$-coherent.

\end{proposition}

\begin{proof}
    The first part of the claim follows from \cite[Theorem 4.1.i)]{bvalued}. 
    
    We prove that $\psi $ preserves $\top ,\bot ,\neg $ and $\kappa $-small $\bigwedge $. The case of $\top $ and $\bot $ is clear. 

    $\neg $ is preserved: $(\iota _2,\iota _1):(1\sqcup 1\hookrightarrow 1\sqcup 1, (\restr{*}{b},\restr{*}{\neg b}))\to (1\sqcup 1\hookrightarrow 1\sqcup 1, (\restr{*}{\neg b},\restr{*}{b}))$ is a map in $Diag(M,\beta )$, therefore $\psi (\neg b)$ is equivalent to the class of $\emptyset \sqcup 1\hookrightarrow 1\sqcup 1$ living in $\faktor{\mathcal{C}}{1\sqcup 1}$ indexed by $(1\sqcup 1\hookrightarrow 1\sqcup 1, (\restr{*}{ b},\restr{*}{\neg b}))$, which is the complement of $1\sqcup \emptyset  = \psi (b)$.

    $\bigwedge $ is preserved: Let $(b_i)_{i<\gamma <\kappa }$ be a collection of elements in $B$. Given $\varepsilon :\gamma \to \{+,-\}$ we write $b_{\varepsilon }$ for $\bigwedge _{i} b_i^{\varepsilon (i)}$. As $\kappa $ is weakly compact $2^{\gamma }<\kappa $ so the collection $(b_{\varepsilon })_{\varepsilon \in 2^{\gamma }}$ is $\kappa $-small.

    For any fixed $i\in \gamma $, the map $(\iota _{\varepsilon (i)})_{\varepsilon }: \bigsqcup _{\varepsilon \in 2^{\gamma }} 1 \to 1\sqcup  1$ sends the $\varepsilon ^{\text{th}}$ copy of $1$ to $1\sqcup \emptyset $ if $\varepsilon (i)=+$ and to $\emptyset \sqcup 1$ if $\varepsilon (i)=-$. This yields a map 
    \[(\bigsqcup _{\varepsilon }1 \hookrightarrow \bigsqcup _{\varepsilon }1, (\restr{*}{b_{\varepsilon}})_{\varepsilon })\to (1\sqcup 1\hookrightarrow 1\sqcup 1 , (\restr{*}{b_i},\restr{*}{\neg b_i}))\]
    in $Diag(M,\beta )$. Hence $\psi (b_i)$ is equivalent to $(\bigsqcup _{\varepsilon :\  \varepsilon (i)=+}1\hookrightarrow \bigsqcup _{\varepsilon } 1, (\restr{*}{b_{\varepsilon}})_{\varepsilon })$. Their intersection is therefore represented by $(1\xhookrightarrow{ \iota _{\varepsilon ^+}}\bigsqcup _{\varepsilon } 1, (\restr{*}{b_{\varepsilon}})_{\varepsilon })$ where $\varepsilon ^+:\gamma \to \{+,-\}$ is the constant $+$ function.

    Similarly, the map $1\sqcup \bigsqcup _{\varepsilon \neq \varepsilon ^+}1 \to 1\sqcup 1$ yields a map 
    \[(1\sqcup \bigsqcup _{\varepsilon \neq \varepsilon ^+}1 \hookrightarrow 1\sqcup \bigsqcup _{\varepsilon \neq \varepsilon ^+}1, (\restr{*}{b_{\varepsilon}})_{\varepsilon })\to (1\sqcup 1\hookrightarrow 1\sqcup 1 , (\restr{*}{\bigwedge _i b_i},\restr{*}{\neg \bigwedge _i b_i}))\]
    in $Diag(M,\beta )$. By taking the pullback of $1\sqcup \emptyset $ we get $(1\xhookrightarrow{ \iota _{\varepsilon ^+}}\bigsqcup _{\varepsilon } 1, (\restr{*}{b_{\varepsilon}})_{\varepsilon })$ as a representation of $\psi (\bigwedge _i b_i)$ living in $\faktor{\mathcal{C}}{\bigsqcup _{\varepsilon }1}$, indexed by $(\bigsqcup _{\varepsilon }1 \hookrightarrow \bigsqcup _{\varepsilon }1, (\restr{*}{b_{\varepsilon}})_{\varepsilon })$. But this coincides with $\bigwedge _i \psi (b_i)$.
\end{proof}

\begin{proposition}
    We have a natural transformation $\delta :\psi ^*M \Rightarrow Y\varphi $
\[\begin{tikzcd}
	&& {Sh(B)} \\
	{\mathcal{C}} &&&& {Sh(\mathcal{C}_{M,\beta })} \\
	&& {\mathcal{C}_{M,\beta }}
	\arrow["{\psi ^*}", curve={height=-12pt}, from=1-3, to=2-5]
	\arrow["\delta", shorten <=6pt, shorten >=6pt, Rightarrow, from=1-3, to=3-3]
	\arrow["M", curve={height=-12pt}, from=2-1, to=1-3]
	\arrow["{\varphi }"', curve={height=12pt}, from=2-1, to=3-3]
	\arrow["Y"', curve={height=12pt}, from=3-3, to=2-5]
\end{tikzcd}\]
    defined as follows: given $x\in \mathcal{C}$ the component $\delta _x$ is induced by the cocone
    
\[\begin{tikzcd}
	& {\widehat{\varphi (x)}} \\
	\\
	& {\colim =\psi ^*Mx} \\
	{\widehat{\psi (b)} ^{\ s\in Mx(b)}} & \dots
	\arrow["{\delta _x}", from=3-2, to=1-2]
	\arrow[curve={height=-18pt}, from=4-1, to=1-2]
	\arrow[from=4-1, to=3-2]
	\arrow[from=4-1, to=4-2]
\end{tikzcd}\]
whose leg at vertex $(b,s)\in (\int Mx)^{op}$ is the $Y$-image of the map $\psi (b)\to \varphi (x)$ represented by 
\[\begin{tikzcd}
	{x\sqcup \emptyset } && {(x\times x) \sqcup x} \\
	& {x\sqcup 1 }
	\arrow["{\Delta \sqcup {!}}", from=1-1, to=1-3]
	\arrow["{1\sqcup {!}}"', from=1-1, to=2-2]
	\arrow["{\pi _2\sqcup {!}}", from=1-3, to=2-2]
\end{tikzcd}\]
living in $\faktor{\mathcal{C}}{x\sqcup 1}$ indexed by $(x\sqcup 1\hookrightarrow x\sqcup 1, (\restr{s}{b},\restr{*}{\neg b}))$.
    
\end{proposition}

\begin{proof}
    First we prove that $\delta _x$ is well-defined, i.e.~given $b\leq b'$ and $s'\in Mx(b')$, the triangle
\[\begin{tikzcd}
	& {\widehat{\varphi (x)}} \\
	\\
	{\widehat{\psi (b)} ^{\ \restr{s'}{b}\in Mx(b)}} && {\widehat{\psi (b')} ^{\ s'\in Mx(b')}}
	\arrow[from=3-1, to=1-2]
	\arrow[hook, from=3-1, to=3-3]
	\arrow[from=3-3, to=1-2]
\end{tikzcd}\]
    commutes. Indeed, all ingredients can be moved to the slice $\faktor{\mathcal{C}}{x\sqcup x\sqcup 1}$ indexed by $(x\sqcup x\sqcup 1\hookrightarrow x\sqcup x\sqcup 1, (\restr{s'}{b},\restr{s'}{b'\setminus b},\restr{*}{\neg b'}))$, along the routes

\[
\adjustbox{width=\textwidth}{
\begin{tikzcd}
	\textcolor{rgb,255:red,153;green,153;blue,153}{{\varphi (x)=(x\times x)\sqcup (x\times 1)}} && {x\sqcup x\sqcup 1^{(\restr{s'}{b},\restr{s'}{b'\setminus b},\restr{*}{\neg b'})}} && \textcolor{rgb,255:red,153;green,153;blue,153}{{(x\times x)\sqcup (x\times 1)=\varphi (x)}} \\
	\textcolor{rgb,255:red,153;green,153;blue,153}{{\psi (b)=x\sqcup \emptyset }} &&&& \textcolor{rgb,255:red,153;green,153;blue,153}{{x\sqcup \emptyset =\psi (b')}} \\
	& {x\sqcup 1^{(\restr{s'}{b},\restr{*}{\neg b})}} && {x\sqcup 1^{(\restr{s'}{b'},\restr{*}{\neg b'})}} \\
	{1\sqcup 1^{(\restr{*}{b},\restr{*}{\neg b})}} && {1^*} && {1\sqcup 1^{(\restr{*}{b'},\restr{*}{\neg b'})}} \\
	\textcolor{rgb,255:red,153;green,153;blue,153}{{\psi (b)=1\sqcup \emptyset}} && \textcolor{rgb,255:red,153;green,153;blue,153}{{\varphi (x)=x}} && \textcolor{rgb,255:red,153;green,153;blue,153}{{1\sqcup \emptyset = \psi (b')}}
	\arrow["{\pi _2\sqcup \pi _2}", color={rgb,255:red,153;green,153;blue,153}, from=1-1, to=3-2]
	\arrow["{id_x \sqcup ({!},{!})}"{description}, from=1-3, to=3-2]
	\arrow["{(id_x,id_x)\sqcup {!}}"{description}, from=1-3, to=3-4]
	\arrow["{\pi _2\sqcup \pi _2}"', color={rgb,255:red,153;green,153;blue,153}, from=1-5, to=3-4]
	\arrow["{\Delta \sqcup {!}}", color={rgb,255:red,153;green,153;blue,153}, from=2-1, to=1-1]
	\arrow[color={rgb,255:red,153;green,153;blue,153}, hook, from=2-1, to=3-2]
	\arrow["{\Delta \sqcup {!}}"', color={rgb,255:red,153;green,153;blue,153}, from=2-5, to=1-5]
	\arrow[color={rgb,255:red,153;green,153;blue,153}, from=2-5, to=3-4]
	\arrow[from=3-2, to=4-1]
	\arrow[from=3-2, to=4-3]
	\arrow[from=3-4, to=4-3]
	\arrow[from=3-4, to=4-5]
	\arrow[color={rgb,255:red,153;green,153;blue,153}, hook, from=5-1, to=4-1]
	\arrow[color={rgb,255:red,153;green,153;blue,153}, from=5-3, to=4-3]
	\arrow[color={rgb,255:red,153;green,153;blue,153}, hook, from=5-5, to=4-5]
\end{tikzcd}
}
\]
We claim (without proof) that by computing the pullbacks one obtains the triangle 
\[
\adjustbox{width=\textwidth}{
\begin{tikzcd}
	& \textcolor{rgb,255:red,153;green,153;blue,153}{{\psi (b')=x\sqcup x\sqcup \emptyset}} \\
	\\
	\textcolor{rgb,255:red,153;green,153;blue,153}{{\psi (b)=x\sqcup \emptyset \sqcup \emptyset }} && \textcolor{rgb,255:red,153;green,153;blue,153}{{\varphi (x)=(x\times x)\sqcup (x\times x)\sqcup (x\times 1)}} \\
	\\
	& {x\sqcup x\sqcup 1^{(\restr{s'}{b},\restr{s'}{b'\setminus b},\restr{*}{\neg b'})}}
	\arrow["{\Delta \sqcup \Delta \sqcup {!}}"{description}, color={rgb,255:red,153;green,153;blue,153}, from=1-2, to=3-3]
	\arrow[color={rgb,255:red,153;green,153;blue,153}, curve={height=6pt}, hook, from=1-2, to=5-2]
	\arrow[color={rgb,255:red,153;green,153;blue,153}, hook, from=3-1, to=1-2]
	\arrow["{\Delta \sqcup {!} \sqcup {!}}"{description, pos=0.7}, color={rgb,255:red,153;green,153;blue,153}, from=3-1, to=3-3]
	\arrow[color={rgb,255:red,153;green,153;blue,153}, curve={height=6pt}, hook, from=3-1, to=5-2]
	\arrow["{\pi _2\sqcup \pi _2 \sqcup \pi _2}"{description}, color={rgb,255:red,153;green,153;blue,153}, curve={height=-6pt}, from=3-3, to=5-2]
\end{tikzcd}
}
\]
which does commute.

Now we prove that $\delta $ is natural in $x$. Given $f:x\to y$ in $\mathcal{C}$ we need that the triangle
\[\begin{tikzcd}
	{\widehat{\psi (b)} ^{\ Mf_b(s)\in My(b)}} && {\widehat{\varphi (y)}} \\
	\\
	{\widehat{\psi (b)} ^{\ s\in Mx(b)}} && {\widehat{\varphi (x)}}
	\arrow[from=1-1, to=1-3]
	\arrow[equals, from=3-1, to=1-1]
	\arrow[from=3-1, to=3-3]
	\arrow["{\widehat{\varphi (f)}}"', from=3-3, to=1-3]
\end{tikzcd}\]
commutes. 

We can move all ingredients to the slice $\faktor{\mathcal{C}}{x\sqcup 1}$ indexed by $(x\sqcup 1 \hookrightarrow x\sqcup 1, (\restr{s}{b},\restr{*}{\neg b}))$ along 

\[
\adjustbox{width=\textwidth}{
\begin{tikzcd}
	\textcolor{rgb,255:red,153;green,153;blue,153}{{\varphi (x)=(x\times x)\sqcup (x\times 1)}} \\
	\textcolor{rgb,255:red,153;green,153;blue,153}{{\psi (b)=x\sqcup \emptyset }} && {x\sqcup 1^{(\restr{s}{b},\restr{*}{\neg b})}} && \textcolor{rgb,255:red,153;green,153;blue,153}{{(y\times y)\sqcup (y\times 1)=\varphi (y)}} \\
	&& {y\sqcup 1^{(\restr{Mf_b(s)}{b},\restr{*}{\neg b})}} && \textcolor{rgb,255:red,153;green,153;blue,153}{{y\sqcup \emptyset = \psi  (b)}} \\
	&& {1^*} & \textcolor{rgb,255:red,153;green,153;blue,153}{{y=\varphi (y)}} \\
	&&& \textcolor{rgb,255:red,153;green,153;blue,153}{{x=\varphi (x)}}
	\arrow["{\pi _2 \sqcup \pi _2}"{description}, color={rgb,255:red,153;green,153;blue,153}, from=1-1, to=2-3]
	\arrow["{\Delta \sqcup {!}}", color={rgb,255:red,153;green,153;blue,153}, from=2-1, to=1-1]
	\arrow[draw={rgb,255:red,153;green,153;blue,153}, hook, from=2-1, to=2-3]
	\arrow["{f\sqcup 1}"', from=2-3, to=3-3]
	\arrow["{\pi _2 \sqcup \pi _2}"{description}, color={rgb,255:red,153;green,153;blue,153}, from=2-5, to=3-3]
	\arrow[from=3-3, to=4-3]
	\arrow["{\Delta \sqcup {!}}"', color={rgb,255:red,153;green,153;blue,153}, from=3-5, to=2-5]
	\arrow[draw={rgb,255:red,153;green,153;blue,153}, hook', from=3-5, to=3-3]
	\arrow[draw={rgb,255:red,153;green,153;blue,153}, from=4-4, to=4-3]
	\arrow[draw={rgb,255:red,153;green,153;blue,153}, from=5-4, to=4-3]
	\arrow["{f=\varphi (f)}"', color={rgb,255:red,153;green,153;blue,153}, from=5-4, to=4-4]
\end{tikzcd}
}
\]

We claim (without proof) that once the pullbacks are computed, our triangle takes the form 

\[
\adjustbox{width=\textwidth}{
\begin{tikzcd}
	& \textcolor{rgb,255:red,153;green,153;blue,153}{{\varphi (x)=(x\times x)\sqcup (x\times 1)}} \\
	\textcolor{rgb,255:red,153;green,153;blue,153}{{\psi (b)=x\sqcup \emptyset }} &&& \textcolor{rgb,255:red,153;green,153;blue,153}{{\varphi (y)=(y\times x)\sqcup (y\times 1)}} \\
	\\
	& {x\sqcup 1^{(\restr{s}{b},\restr{*}{\neg b})}}
	\arrow["{\varphi (f)=(f\times id_x)\sqcup (f\times id_1)}"{description}, color={rgb,255:red,153;green,153;blue,153}, from=1-2, to=2-4]
	\arrow["{\pi _2 \sqcup \pi _2}"{description}, color={rgb,255:red,153;green,153;blue,153}, from=1-2, to=4-2]
	\arrow["{\Delta \sqcup {!}}"{description}, color={rgb,255:red,153;green,153;blue,153}, from=2-1, to=1-2]
	\arrow["{\langle f, id_x \rangle \sqcup {!}}"{description, pos=0.7}, color={rgb,255:red,153;green,153;blue,153}, from=2-1, to=2-4]
	\arrow[color={rgb,255:red,153;green,153;blue,153}, hook, from=2-1, to=4-2]
	\arrow["{\pi _2 \sqcup \pi _2}"{description}, color={rgb,255:red,153;green,153;blue,153}, from=2-4, to=4-2]
\end{tikzcd}
}
\]
which does commute. 
\end{proof}

\begin{proposition}
    Let $\kappa $ be weakly compact, and let $\mathcal{C}, \mathcal{D}, \mathcal{D}' $ be $\kappa$-coherent categories. Given a $\kappa$-coherent functor $F:\mathcal{D}\to \mathcal{D}'$, we have a natural transformation 
\[\begin{tikzcd}
	&& {Sh(\mathcal{D})} \\
	{\mathcal{C}} &&&& {Sh(\mathcal{D'})}
	\arrow["{F^*}", curve={height=-12pt}, from=1-3, to=2-5]
	\arrow["{S_{\mathcal{C}}^{Sh(\mathcal{D})}}", curve={height=-12pt}, from=2-1, to=1-3]
	\arrow[""{name=0, anchor=center, inner sep=0}, "{S_{\mathcal{C}}^{Sh(\mathcal{D'})}}"', curve={height=30pt}, from=2-1, to=2-5]
	\arrow["\nu", shorten <=7pt, shorten >=7pt, Rightarrow, from=1-3, to=0]
\end{tikzcd}\]
    induced by the cocone
    
\[\begin{tikzcd}
	& {\mathbf{DLat}_{\kappa ,\kappa }(Sub_{\mathcal{C}}(x),Sub_{Sh(\mathcal{D}')}(\widehat{\bullet }))} \\
	\\
	& {\colim =F ^*S_{\mathcal{C}}^{Sh(\mathcal{D})} (x)} \\
	{\widehat{F d }^{\ h\in \mathbf{DLat}_{\kappa ,\kappa }(Sub_{\mathcal{C}}(x),Sub_{Sh(\mathcal{D})}(\widehat{d}))}} & \dots
	\arrow["{\nu _x}", from=3-2, to=1-2]
	\arrow[curve={height=-18pt}, from=4-1, to=1-2]
	\arrow[from=4-1, to=3-2]
	\arrow[from=4-1, to=4-2]
\end{tikzcd}\]

    whose leg at vertex $(d, h: Sub_{\mathcal{C}}(x)\to Sub_{Sh(\mathcal{D})}(\widehat{d}))\in (\int S_{\mathcal{C}}^{Sh(\mathcal{D})}(x))^{op}$ sends $id_{F d}$ to $Sub_{\mathcal{C}}(x)\xrightarrow{h}Sub_{Sh(\mathcal{D})}(\widehat{ d})\xrightarrow{F^*} Sub_{Sh(\mathcal{D}')}(\widehat{F d})$.
\end{proposition}

\begin{proof}
    $\nu _x$ is well-defined, i.e.~given $r:(d_1,h_1)\to (d_2,h_2)$ in $(\int S_{\mathcal{C}}^{Sh(\mathcal{D})}(x))^{op}$ the triangle
    
\[\begin{tikzcd}
	& {\mathbf{DLat}_{\kappa ,\kappa }(Sub_{\mathcal{C}}(x),Sub_{Sh(\mathcal{D}')}(\widehat{\bullet }))} \\
	{\widehat{Fd_1}^{h_1}} && {\widehat{Fd_2}^{h_2}}
	\arrow[from=2-1, to=1-2]
	\arrow["{\widehat{Fr}}"', from=2-1, to=2-3]
	\arrow[from=2-3, to=1-2]
\end{tikzcd}\]
    commutes. But this is just the commutativity of 
\[\begin{tikzcd}
	&& {Sub_{Sh(\mathcal{D})}(\widehat{d_1})} && {Sub_{Sh(\mathcal{D'})}(\widehat{Fd_1})} \\
	{Sub_{\mathcal{C}}(x)} \\
	&& {Sub_{Sh(\mathcal{D})}(\widehat{d_2})} && {Sub_{Sh(\mathcal{D}')}(\widehat{Fd_2})}
	\arrow["{F^*}", from=1-3, to=1-5]
	\arrow["{h_1}", from=2-1, to=1-3]
	\arrow["{h_2}"', from=2-1, to=3-3]
	\arrow["{\widehat{r}^{-1}}"', from=3-3, to=1-3]
	\arrow["{F^*}"', from=3-3, to=3-5]
	\arrow["{\widehat{Fr}^{-1}}"', from=3-5, to=1-5]
\end{tikzcd}\]

$\nu $ is natural in $x$: take $f:x\to y$ in $\mathcal{C}$. By restricting the naturality square to a leg of the colimit cocone, we need the commutativity of
\[\begin{tikzcd}
	{\mathbf{DLat}_{\kappa ,\kappa }(Sub_{\mathcal{C}}(x),Sub_{Sh(\mathcal{D}')}(\widehat{\bullet }))} && {\mathbf{DLat}_{\kappa ,\kappa }(Sub_{\mathcal{C}}(y),Sub_{Sh(\mathcal{D}')}(\widehat{\bullet }))} \\
	{\widehat{Fd}^{h}} && {\widehat{Fd}^{h\circ f^{-1}}}
	\arrow["{(f^{-1})^{\circ}}", from=1-1, to=1-3]
	\arrow[from=2-1, to=1-1]
	\arrow[equals, from=2-1, to=2-3]
	\arrow[from=2-3, to=1-3]
\end{tikzcd}\]
which follows as both routes take $id_{Fd}$ to $Sub_{\mathcal{C}}(y)\xrightarrow{f^{-1}}Sub_{\mathcal{C}}(x)\xrightarrow{h}Sub_{Sh(\mathcal{D})}(\widehat{d})\xrightarrow{F^*}Sub_{Sh(\mathcal{D'})}(\widehat{Fd})$.

\end{proof}

\begin{remark}
\label{nositeversion}
    It is not necessary to fix a site presentation for $Sh(\mathcal{D}')$. Given a $\kappa$-coherent functor $F:\mathcal{D}\to Sh(\mathcal{D}')$ we can induce $\nu _x$ by the cocone whose leg at $(d,h)$ is 
    \[
    Fd=\widehat{Fd}=Sh(\mathcal{D})(-,Fd)\to S_{\mathcal{C}}^{Sh(\mathcal{D}')}(x)=\mathbf{DLat}_{\kappa ,\kappa }(Sub_{\mathcal{C}}(x),Sub_{Sh(\mathcal{D}')}(\bullet ))
    \]
    taking $id_{Fd}$ to $Sub_{\mathcal{C}}(x)\xrightarrow{h}Sub_{Sh(\mathcal{D})}(\widehat{d})\xrightarrow{F^*}Sub_{Sh(\mathcal{D}')}(Fd)$.
\end{remark}

\begin{theorem}
\label{chiimpliesN}
    $\kappa ,\mathcal{C},B,M,\beta $ as before. Assume that there is a $\kappa$-coherent functor $\chi :\mathcal{C}_{M,\beta } \to Sh(B)$ such that $\chi \circ \psi \geq Y $. This means, that for any $b\in B$ we have $\chi \circ \psi (b)\geq Y(b)$, as subobjects of $1\in Sh(B)$. Equivalently: there is a (necessarily unique) natural transformation $\mu :Y\Rightarrow \chi \circ \psi $. 
    
    Then there is a model $N:\mathcal{C}\to Sh(B)$ and a natural transformation $\alpha :M\Rightarrow N$ such that $\beta \leq  tp_N \circ \alpha $.
\end{theorem}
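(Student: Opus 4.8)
The model is $N:=\chi\circ\varphi$, which is $\kappa$-coherent (hence a model) as a composite of the $\kappa$-coherent functors $\varphi$ of Proposition~\ref{CMbeta} and $\chi$. To produce $\alpha$ I would first turn $\chi$ into the inverse image of a geometric morphism: since $Sh(\mathcal{C}_{M,\beta})$ classifies $\kappa$-coherent functors out of $\mathcal{C}_{M,\beta}$, the functor $\chi$ corresponds to a geometric morphism with cocontinuous inverse image $\chi^*:Sh(\mathcal{C}_{M,\beta})\to Sh(B)$ satisfying $\chi^*\circ Y\cong\chi$ (here $Y$ is the Yoneda embedding of $\mathcal{C}_{M,\beta}$). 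Applying $\chi^*$ to the natural transformation $\delta:\psi^*M\Rightarrow Y\varphi$ constructed above gives $\chi^*\delta:\chi^*\psi^*M\Rightarrow\chi^*Y\varphi=\chi\varphi=N$. It remains to map $M$ into $\chi^*\psi^*M$. For $b\in B$ one has $\psi^*\widehat b\cong\widehat{\psi(b)}$ (inverse image on representables), hence $\chi^*\psi^*\widehat b\cong\chi^*\widehat{\psi(b)}=\chi\psi(b)$; thus the hypothesis is, on each $b$, precisely a map $\mu_b:\widehat b\to\chi\psi(b)=\chi^*\psi^*\widehat b$, natural in $b$. Since $\mathrm{Id}_{Sh(B)}$ and $\chi^*\psi^*$ are cocontinuous and $Sh(B)$ is generated under colimits by the representables, left Kan extension along Yoneda upgrades $\mu$ to $\eta:\mathrm{Id}_{Sh(B)}\Rightarrow\chi^*\psi^*$ with $\eta_{\widehat b}=\mu_b$. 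I then set $\alpha:=(\chi^*\delta)\circ(\eta M):M\Rightarrow\chi^*\psi^*M\Rightarrow N$, a natural transformation.

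For the inequality I argue pointwise and localize. By Definition~\ref{deftp} the subobject $(tp_N\circ\alpha)_{x,d}(s)(u)\hookrightarrow\widehat d$ is the pullback of $Nu\hookrightarrow Nx$ along $\alpha_{x,d}(s)$, so $\beta_{x,d}(s)(u)\le(tp_N\circ\alpha)_{x,d}(s)(u)$ says precisely that the restriction of $\alpha_{x,d}(s)$ to $\beta_{x,d}(s)(u)$ factors through $Nu$. By Remark~\ref{subisomega} a subobject of $\widehat d$ is a $\tau_{\kappa-coh}$-closed sieve on $d$, i.e.\ it is the union $\bigcup_b\widehat b$ of the representables $\widehat b\le\beta_{x,d}(s)(u)$ it contains; since these maps are jointly epic and $Nu\hookrightarrow Nx$ is monic, it is enough to treat each such $\widehat b$ separately (the image of a map is the join of the images of its restrictions along a jointly epic family). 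For $\widehat b\le\beta_{x,d}(s)(u)$ put $\sigma:=\restr{s}{b}\in Mx(b)$; naturality of $\beta$ in the $B$-variable gives $\beta_{x,b}(\sigma)(u)=\restr{\beta_{x,d}(s)(u)}{b}=\widehat b$. So the whole statement reduces to: if $\sigma\in Mx(b)$ satisfies $\beta_{x,b}(\sigma)(u)=\widehat b$, then $\alpha_x\circ\sigma:\widehat b\to Nx$ factors through $Nu$.

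Next I unwind $\alpha$ on $\sigma$. Viewing $\sigma$ as a map $\widehat b\to Mx$, naturality of $\eta$ gives $\eta_{Mx}\circ\sigma=\chi^*\psi^*(\sigma)\circ\mu_b$; moreover $\psi^*\sigma:\widehat{\psi(b)}\to\psi^*Mx$ is the colimit leg of $\psi^*Mx$ at $(b,\sigma)\in(\int Mx)^{op}$, so that $\delta_x\circ\psi^*\sigma=Y(g_{b,\sigma})$, where $g_{b,\sigma}:\psi(b)\to\varphi(x)$ is the cospan-map leg defining $\delta_x$. Applying $\chi^*$ and using $\chi^*Y\cong\chi$ yields $\alpha_x\circ\sigma=\chi(g_{b,\sigma})\circ\mu_b$. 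Hence it suffices to prove the purely syntactic claim that $g_{b,\sigma}$ factors through $\varphi(u)\hookrightarrow\varphi(x)$ in $\mathcal{C}_{M,\beta}$: then $\chi(g_{b,\sigma})$ factors through $\chi\varphi(u)=Nu$, and precomposing with $\mu_b$ preserves the factorization.

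The crux --- and the step I expect to be the main obstacle --- is this factorization. The point is that $(b,\sigma)$ names a generic element of $\varphi(x)$ which, by $\beta_{x,b}(\sigma)(u)=\widehat b$, should lie in $\varphi(u)$. To make this precise I would glue $\sigma$ over $b$ with the unique section over $\neg b$ into a global section $t:=(\restr{\sigma}{b},\restr{*}{\neg b})\in M(x\sqcup 1)(\top)$, and observe that $(u\sqcup 1\hookrightarrow x\sqcup 1,\,t)$ is an object of $Diag(M,\beta)$: over $b$ its $\beta$-value is $\beta_{x,b}(\sigma)(u)=\widehat b$, while over $\neg b$ the section lies in the summand $M1=1\subseteq u\sqcup 1$, so $\beta_{x\sqcup 1,\top}(t)(u\sqcup 1)=\widehat b\vee\widehat{\neg b}=\widehat\top$. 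The identity of $x\sqcup 1$ is then a morphism $(u\sqcup 1\hookrightarrow x\sqcup 1,t)\to(x\sqcup 1\hookrightarrow x\sqcup 1,t)$ in $Diag(M,\beta)$, whose associated transition functor is pullback along $u\sqcup 1\hookrightarrow x\sqcup 1$ and carries the cospan representing $g_{b,\sigma}$ from $\faktor{\mathcal{C}}{x\sqcup 1}$ into $\faktor{\mathcal{C}}{u\sqcup 1}$; computing the resulting pullbacks shows it factors through $\varphi(u)$. This last computation --- tracking the cospan through the transition functor of the $(2,1)$-colimit --- is exactly the kind of pullback bookkeeping deferred as ``claimed without proof'' in the construction of $\delta$, and carrying it out carefully is the hard part; everything else is formal.
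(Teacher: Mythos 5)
Your proof is correct and rests on the same essential ingredients as the paper's: the same candidates $N=\chi\varphi\cong\chi^*Y\varphi$ and $\alpha=\chi^*\delta\circ\widetilde{\mu}M$ (your $\eta$ is the paper's $\widetilde{\mu}$), and the same key lemma, namely that $(u\sqcup 1\hookrightarrow x\sqcup 1,(\restr{\sigma}{b},\restr{*}{\neg b}))$ is an object of $Diag(M,\beta)$ --- verified, as in the paper, by naturality of $\beta$ in both variables against the decomposition $Sub(\widehat{\top})\cong Sub(\widehat{b})\times Sub(\widehat{\neg b})$ --- so that the leg $\psi(b)\to\varphi(x)$ of $\delta_x$ factors through $\varphi(u)$ after transporting the representing cospan along the transition functor of the $(2,1)$-colimit. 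Where you genuinely diverge is in the organization of the inequality: the paper routes $tp_N\circ\alpha$ through the type space functor of the intermediate topos $Sh(\mathcal{C}_{M,\beta})$ via the comparison maps $\nu_1,\nu_2$, proving the chain $\beta\leq(*)\leq(**)$ together with the extra compatibility $\nu_2\circ\chi^*tp_{Y\varphi}=tp_{\chi^*Y\varphi}$, whereas you reduce at the outset to representable subobjects $\widehat{b}\leq\beta_{x,d}(s)(u)$ (legitimate, since every subobject of $\widehat{d}$ is a union of the representables it contains and $Nu\hookrightarrow Nx$ is monic) and then compute $tp_N\circ\alpha$ directly from Definition \ref{deftp}. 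This absorbs both the $\nu_2$-compatibility and the paper's separate ``$\beta\leq(*)$'' pullback argument for $\mu$ into the single observation that precomposition with $\mu_b$ preserves factorization through $Nu$ --- a real streamlining of the verification. The one step you defer (tracking the cospan through the pullback to $\faktor{\mathcal{C}}{u\sqcup 1}$) is left implicit to essentially the same extent in the paper's own proof, and it does go through: the first component of $\Delta\sqcup{!}$ pulls back to $\langle i,\mathrm{id}\rangle\colon u\to x\times u$, which lands in $u\times u$, the relevant summand of the pulled-back $\varphi(u)$.
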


\begin{proof}
    The previously introduced ingredients fit together in the following diagram:
    
\[
\adjustbox{scale=0.95}{
\begin{tikzcd}
	&&&&&& {Sh(B)} \\
	\\
	\\
	{\mathcal{C}} &&&&&&& {Sh(\mathcal{C}_{M,\beta })} &&& {Sh(B)} \\
	&&&&&&& {}
	\arrow["{\psi ^*}"{description}, from=1-7, to=4-8]
	\arrow[""{name=0, anchor=center, inner sep=0}, equals, from=1-7, to=4-11]
	\arrow[""{name=1, anchor=center, inner sep=0}, "M"{description}, curve={height=-24pt}, from=4-1, to=1-7]
	\arrow[""{name=2, anchor=center, inner sep=0}, "{S_{\mathcal{C}}^{Sh(B)}}"{description}, curve={height=12pt}, from=4-1, to=1-7]
	\arrow[""{name=3, anchor=center, inner sep=0}, "{Y\varphi }"{description}, curve={height=-18pt}, from=4-1, to=4-8]
	\arrow[""{name=4, anchor=center, inner sep=0}, "{S_{\mathcal{C}}^{Sh(\mathcal{C}_{M,\beta })}}"{description}, curve={height=18pt}, from=4-1, to=4-8]
	\arrow["{S_{\mathcal{C}}^{Sh(B)}}"{description}, curve={height=60pt}, from=4-1, to=4-11]
	\arrow[""{name=5, anchor=center, inner sep=0}, "{\chi ^*}"{description}, from=4-8, to=4-11]
	\arrow["{\nu _2}", Rightarrow, from=4-8, to=5-8]
	\arrow["\delta"{description}, curve={height=6pt}, shorten <=8pt, shorten >=8pt, Rightarrow, from=1-7, to=3]
	\arrow["{\nu _1}"{description}, curve={height=-6pt}, shorten <=10pt, shorten >=10pt, Rightarrow, from=1-7, to=4]
	\arrow["{\widetilde{\mu }}"', shorten <=8pt, shorten >=8pt, Rightarrow, from=0, to=5]
	\arrow["\beta"', shorten <=5pt, shorten >=5pt, Rightarrow, from=1, to=2]
	\arrow["tp"', shorten <=5pt, shorten >=5pt, Rightarrow, from=3, to=4]
\end{tikzcd}
}
\]

Our candidates are: $N=\chi ^* Y \varphi $ and $\alpha = \chi ^* \delta \circ \widetilde{\mu }M$. 

First we will compare $\nu _1 \circ \psi ^* \beta $ and $tp \circ \delta $. Recall that $\psi ^*Mx=\colim _{(\int Mx)^{op}} \widehat{\psi b}$, so we will compute the two natural transformations out of $\psi ^*Mx$ modulo their restrictions to these representables (the \underline{claim} is that the restriction of  $\nu _1 \circ \psi ^* \beta $ is pointwise $\leq $ than the restriction of $tp \circ \delta $). We have a commutative diagram:

\[
\adjustbox{scale=0.9}{
\begin{tikzcd}
	&&&&&& {S_{\mathcal{C}}^{Sh(\mathcal{C}_{M,\beta})}x} \\
	&& {S_{\mathcal{C}}^{Sh(B)}x} &&&& {\psi ^*S_{\mathcal{C}}^{Sh(B)}x} \\
	&& Mx &&&& {colim=\psi ^*Mx} \\
	{\widehat{b}^s} &&&& {\widehat{\psi b}^s}
	\arrow["{({\nu _1})_x}"', from=2-7, to=1-7]
	\arrow["{\beta _x}"', from=3-3, to=2-3]
	\arrow["{\psi ^*\beta  _x}"', from=3-7, to=2-7]
	\arrow["{id_b\mapsto \beta _{x,b}(s)}", curve={height=-6pt}, from=4-1, to=2-3]
	\arrow["{id_b\mapsto s}"', from=4-1, to=3-3]
	\arrow["{id_{\psi b}\mapsto \psi ^* \circ \beta _{x,b}(s)}", curve={height=-18pt}, from=4-5, to=1-7]
	\arrow[curve={height=-6pt}, from=4-5, to=2-7]
	\arrow[from=4-5, to=3-7]
\end{tikzcd}
}
\]

That is: the map $\widehat{b}\xrightarrow{id_b\mapsto \beta _{x,b}(s)}S_{\mathcal{C}}^{Sh(B)}x$ is the $(b,\beta _{x,b}(s))$-leg of the canonical colimit (expressing $S_{\mathcal{C}}^{Sh(B)}x$ as a colimit of representables), so by the definition $\nu _1$, the $\psi ^*$-image post-composed with $(\nu _1)_x$ results $Sub_{\mathcal{C}}(x)\xrightarrow{\beta _{x,b}(s)}Sub_{Sh(B)}(\widehat{b})\xrightarrow{\psi ^*}Sub_{Sh(\mathcal{C}_{M,\beta })}(\widehat{\psi b})$. This homomorphism takes $u\hookrightarrow x$ to $\bigcup \{\psi b'\hookrightarrow \psi b : b'\in \beta _{x,b}(s)(u)\}$ (as $\beta _{x,b}(s)(u)\hookrightarrow \widehat{b}$ can be written as the union of all $\widehat{b'}$'s below it, and $\psi ^*$ preserves unions).

The other natural transformation is described as:

\[
\adjustbox{scale=0.75}{
\begin{tikzcd}
	{\widehat{\psi b}^s} && {\psi ^*Mx} && {Y\varphi x} && {S_{\mathcal{C}}^{Sh(\mathcal{C}_{M,\beta})}} \\
	{id_{\psi b}} &&&& {\Delta \sqcup {!}: \psi b\to \varphi x} && \begin{array}{c} u \mapsto \bigcup \makecell{\{d\hookrightarrow \psi b : d\hookrightarrow \psi b\to \varphi x \\ \text{ factors through } \varphi u \}} \end{array}
	\arrow[from=1-1, to=1-3]
	\arrow["{\delta _x}", from=1-3, to=1-5]
	\arrow["{tp_x}", from=1-5, to=1-7]
	\arrow[shorten <=12pt, shorten >=12pt, maps to, from=2-1, to=2-5]
	\arrow[shorten <=4pt, shorten >=4pt, maps to, from=2-5, to=2-7]
\end{tikzcd}
}
\]

So we have to prove that for $b'\leq b$ if $b'\in \beta _{x,b}(s)(u)$ then $\psi b'\to \psi b\to \varphi x$ factors through $\varphi u$.

We can represent $\psi b'\to \psi b\to \varphi x \leftarrow \varphi u$ in $\faktor{\mathcal{C}}{x\sqcup x\sqcup 1}$ indexed by $(x\sqcup x\sqcup 1 \hookrightarrow x\sqcup x\sqcup 1, (\restr{s}{b'}, \restr{s}{b\setminus b'}, \restr{*}{\neg b}))$ as:

\[
\adjustbox{scale=0.75}{
\begin{tikzcd}
	{x\sqcup \emptyset \sqcup \emptyset} &&&& {(u\times x)\sqcup (u\times x) \sqcup (u\times 1)} \\
	& {x\sqcup x \sqcup \emptyset} && {(x\times x) \sqcup (x\times x) \sqcup (x\times 1)} \\
	\\
	&& {x\sqcup x\sqcup 1}
	\arrow["{?}"{description}, dashed, from=1-1, to=1-5]
	\arrow[hook, from=1-1, to=2-2]
	\arrow[curve={height=30pt}, hook, from=1-1, to=4-3]
	\arrow[hook', from=1-5, to=2-4]
	\arrow["{\pi _2\sqcup \pi _2 \sqcup \pi _2}"{description}, curve={height=-30pt}, from=1-5, to=4-3]
	\arrow["{\Delta \sqcup \Delta \sqcup {!}}", from=2-2, to=2-4]
	\arrow[hook, from=2-2, to=4-3]
	\arrow["{\pi _2\sqcup \pi _2 \sqcup \pi _2}"{description}, from=2-4, to=4-3]
\end{tikzcd}
}
\]

If we are allowed to take the pullback of this diagram along $u\sqcup x\sqcup 1\hookrightarrow x\sqcup x\sqcup 1$ then the dashed arrow appears. So it suffices to prove that (assuming $b'\in \beta _{x,b}(s)(u)$) we have 
\[(u\sqcup x\sqcup 1\hookrightarrow x\sqcup x\sqcup 1,(\restr{s}{b'}, \restr{s}{b\setminus b'}, \restr{*}{\neg b}))\in Diag(M,\beta)\]
That is: $\beta _{x\sqcup x\sqcup 1, \top}((\restr{s}{b'}, \restr{s}{b\setminus b'}, \restr{*}{\neg b}))(u\sqcup x\sqcup 1 )=\widehat{\top }$.

Since $Sub_{Sh(B)}(\widehat{\top })=Sub_{Sh(B)}(\widehat{b'})\times Sub_{Sh(B)}(\widehat{b\setminus b'}) \times Sub_{Sh(B)}(\widehat{\neg b})$ it is enough to see that each composite 

\[\begin{tikzcd}
	&&&&& {Id_{\kappa }(\downarrow b' )} \\
	{Sub_{\mathcal{C}}(x\sqcup x\sqcup 1)} &&&& {Id_{\kappa }(\downarrow \top )} & {Id_{\kappa }(\downarrow b\setminus b')} \\
	&&&&& {Id_{\kappa }(\downarrow \neg b )}
	\arrow["{\beta _{x\sqcup x\sqcup 1, \top}((\restr{s}{b'}, \restr{s}{b\setminus b'}, \restr{*}{\neg b}))}", from=2-1, to=2-5]
	\arrow[from=2-5, to=1-6]
	\arrow[from=2-5, to=2-6]
	\arrow[from=2-5, to=3-6]
\end{tikzcd}\]

takes $u\sqcup x\sqcup 1$ to the top element. By naturality we have
\[\begin{tikzcd}
	{Sub_{\mathcal{C}}(x)} &&&& {Id_{\kappa }(\downarrow b' )} \\
	\\
	{Sub_{\mathcal{C}}(x\sqcup x\sqcup 1)} &&&& {Id_{\kappa }(\downarrow \top )}
	\arrow["{\beta _{x,b'}(\restr{s}{b'})}"', from=1-1, to=1-5]
	\arrow["{i_1^{-1}}"', from=3-1, to=1-1]
	\arrow["{\beta _{x\sqcup x\sqcup 1, \top}((\restr{s}{b'}, \restr{s}{b\setminus b'}, \restr{*}{\neg b}))}", from=3-1, to=3-5]
	\arrow[from=3-5, to=1-5]
\end{tikzcd}\]
and $b'\in \beta _{x,b}(s)(u)$ implies $b'\in \beta _{x,b'}(\restr{s}{b'})(u)$. The other two composites are trivial, and hence we proved the \underline{claim}.

Now we prove that the natural transformation
\[
\adjustbox{scale=0.9}{
\begin{tikzcd}
	&&&& {Sh(B)} \\
	\\
	&&&&& {Sh(\mathcal{C}_{M,\beta })} \\
	{\mathcal{C}} &&&&&&& {Sh(B)} \\
	&&&&& {}
	\arrow["{\psi ^*}"{description}, from=1-5, to=3-6]
	\arrow[""{name=0, anchor=center, inner sep=0}, curve={height=-24pt}, equals, from=1-5, to=4-8]
	\arrow[""{name=1, anchor=center, inner sep=0}, "{\chi ^*}"{description}, from=3-6, to=4-8]
	\arrow[""{name=2, anchor=center, inner sep=0}, "M"{description}, curve={height=-24pt}, from=4-1, to=1-5]
	\arrow[""{name=3, anchor=center, inner sep=0}, "{S_{\mathcal{C}}^{Sh(B)}}"{description, pos=0.6}, curve={height=12pt}, from=4-1, to=1-5]
	\arrow[""{name=4, anchor=center, inner sep=0}, "{S_{\mathcal{C}}^{Sh(\mathcal{C}_{M,\beta })}}"{description, pos=0.6}, curve={height=18pt}, from=4-1, to=3-6]
	\arrow[""{name=5, anchor=center, inner sep=0}, "{S_{\mathcal{C}}^{Sh(B)}}"{description, pos=0.6}, curve={height=50pt}, from=4-1, to=4-8]
	\arrow["{\widetilde{\mu }}"', shorten <=7pt, shorten >=7pt, Rightarrow, from=0, to=1]
	\arrow["\beta"', shorten <=5pt, shorten >=5pt, Rightarrow, from=2, to=3]
	\arrow["{\nu _1}"', shorten <=6pt, shorten >=6pt, Rightarrow, from=3, to=4]
	\arrow["{\nu _2}"', shorten <=7pt, shorten >=7pt, Rightarrow, from=4, to=5]
\end{tikzcd}
\tag{$*$}
}
\]
is pointwise $\leq $ than 

\[
\adjustbox{scale=0.9}{
\begin{tikzcd}
	&&&& {Sh(B)} \\
	\\
	\\
	{\mathcal{C}} &&&& {Sh(\mathcal{C}_{M,\beta })} &&& {Sh(B)}
	\arrow["{\psi ^*}"{description}, from=1-5, to=4-5]
	\arrow[""{name=0, anchor=center, inner sep=0}, curve={height=-18pt}, equals, from=1-5, to=4-8]
	\arrow["M"{description}, curve={height=-24pt}, from=4-1, to=1-5]
	\arrow[""{name=1, anchor=center, inner sep=0}, "{S_{\mathcal{C}}^{Sh(\mathcal{C}_{M,\beta })}}"{description}, curve={height=24pt}, from=4-1, to=4-5]
	\arrow[""{name=2, anchor=center, inner sep=0}, "{Y\varphi }"{description}, curve={height=-24pt}, from=4-1, to=4-5]
	\arrow[""{name=3, anchor=center, inner sep=0}, "{S_{\mathcal{C}}^{Sh(B)}}"{description}, curve={height=80pt}, from=4-1, to=4-8]
	\arrow[""{name=4, anchor=center, inner sep=0}, "{\chi ^*}"{description}, from=4-5, to=4-8]
	\arrow["\delta"{description}, curve={height=6pt}, shorten <=7pt, shorten >=7pt, Rightarrow, from=1-5, to=2]
	\arrow["{\widetilde{\mu }}"', shorten <=8pt, shorten >=8pt, Rightarrow, from=0, to=4]
	\arrow["tp"', shorten <=6pt, shorten >=6pt, Rightarrow, from=2, to=1]
	\arrow["{\nu _2}", shorten >=4pt, Rightarrow, from=4-5, to=3]
\end{tikzcd}
\tag{$**$}
}
\]

To compute the action of $\nu _2$ (and $\mu $) observe

\[
\adjustbox{width=\textwidth}{
\begin{tikzcd}
	&&&&&&&& {S_{\mathcal{C}}^{Sh(B)} x} \\
	&&& {S_{\mathcal{C}}^{Sh(\mathcal{C}_{M,\beta })}x} \\
	&&&&&&&& {\chi ^* S_{\mathcal{C}}^{Sh(\mathcal{C}_{M,\beta })}x} \\
	\\
	&&&& {colim=\psi ^*Mx} &&&& {\chi ^*\psi ^*Mx} \\
	{\widehat{\psi b}^s} &&&&& {\chi ^*\psi ^*\widehat{b}=\widehat{\chi \psi b}^s} &&& Mx \\
	&&&&& {\widehat{b}}
	\arrow["{(\nu _2)_x}"', from=3-9, to=1-9]
	\arrow["{(\nu _1\circ \psi ^*\beta )_x}"{description}, color={rgb,255:red,128;green,128;blue,128}, curve={height=-18pt}, from=5-5, to=2-4]
	\arrow["{(tp \circ \delta )_x}"{description}, curve={height=18pt}, from=5-5, to=2-4]
	\arrow["{\chi ^*(\nu _1\circ \psi ^*\beta )_x}", color={rgb,255:red,128;green,128;blue,128}, curve={height=-12pt}, from=5-9, to=3-9]
	\arrow["{\chi ^*(tp \circ \delta )_x}"', curve={height=12pt}, from=5-9, to=3-9]
	\arrow["{1_{\psi b}\mapsto \psi ^* \circ \beta _{x,b}(s)}"{description}, color={rgb,255:red,128;green,128;blue,128}, curve={height=-18pt}, from=6-1, to=2-4]
	\arrow["{1_{\psi b}\mapsto tp_{x,\psi b}(\Delta \sqcup \ !)}"{description}, curve={height=18pt}, from=6-1, to=2-4]
	\arrow[from=6-1, to=5-5]
	\arrow["{1_{\chi \psi b}\mapsto \chi ^* \circ \psi ^* \circ \beta _{x,b}(s)}", color={rgb,255:red,128;green,128;blue,128}, curve={height=-30pt}, from=6-6, to=1-9]
	\arrow["{1_{\chi \psi b}\mapsto \chi ^*\circ  tp_{x,\psi b}(\Delta \sqcup \ !)}"{description, pos=0.3}, curve={height=-12pt}, from=6-6, to=1-9]
	\arrow[from=6-6, to=5-9]
	\arrow["{\widetilde{\mu }_{Mx}}", from=6-9, to=5-9]
	\arrow["{\mu _b}", from=7-6, to=6-6]
	\arrow["{id_b\mapsto s}"', from=7-6, to=6-9]
\end{tikzcd}
}
\]
(Here we use the version of Remark \ref{nositeversion}, i.e.~we think of $\chi \psi b\in Sh(B)$ as the representable $Sh(B)(-,\chi \psi b)$.) 

We conclude that the natural transformation $(*)$ (resp.~$(**)$) takes $s\in Mx(b)$ to the upper (resp.~lower) homomorphism in 
\[
\adjustbox{width=\textwidth}{
\begin{tikzcd}
	&& {Sub_{Sh(B)}(\widehat{b})} \\
	{Sub_{\mathcal{C}}(x)} &&&& {Sub_{Sh(\mathcal{C}_{M,\beta })}(\widehat{\psi b})} & {Sub_{Sh(B)}(\chi ^*\psi ^*\ \widehat{b})} & {Sub_{Sh(B)}(\widehat{b})}
	\arrow["{\psi ^*}", curve={height=-6pt}, from=1-3, to=2-5]
	\arrow["{\beta _{x,b}(s)}", curve={height=-6pt}, from=2-1, to=1-3]
	\arrow[""{name=0, anchor=center, inner sep=0}, "{tp_{x,\psi b}(\Delta \sqcup  {!})}"', curve={height=12pt}, from=2-1, to=2-5]
	\arrow["{\chi ^*}", from=2-5, to=2-6]
	\arrow["{\mu _b^{-1}}", from=2-6, to=2-7]
	\arrow["{\leq }"{marking, allow upside down}, draw=none, from=1-3, to=0]
\end{tikzcd}
}
\]
This proves $(*)\leq (**)$. Moreover $\beta \leq (*)$ as we have
\[\begin{tikzcd}
	{Sub_{Sh(B)}(\widehat{b})} &&& {Sub_{Sh(B)}(\widehat{b})} \\
	& {Sub_{Sh(\mathcal{C}_{M,\beta })}(\widehat{\psi b})} & {Sub_{Sh(B)}(\chi ^*\psi ^*\ \widehat{b})}
	\arrow[""{name=0, anchor=center, inner sep=0}, curve={height=-18pt}, equals, from=1-1, to=1-4]
	\arrow["{\psi ^*}", from=1-1, to=2-2]
	\arrow[""{name=1, anchor=center, inner sep=0}, "{\chi ^*}", from=2-2, to=2-3]
	\arrow["{\mu _b^{-1}}", from=2-3, to=1-4]
	\arrow["{\leq }"{marking, allow upside down}, draw=none, from=0, to=1]
\end{tikzcd}\]
by the existence of the dashed arrow in

\[\begin{tikzcd}
	& {\chi ^*\psi ^*\widehat{b'}} && {\chi ^*\psi ^*\widehat{b}} \\
	&& pb \\
	& {\mu _b^{-1}\chi ^*\psi ^* \widehat{b'}} && {\widehat{b}} \\
	{\widehat{b'}}
	\arrow[hook, from=1-2, to=1-4]
	\arrow[hook, from=3-2, to=1-2]
	\arrow[hook, from=3-2, to=3-4]
	\arrow["{\mu _b}"', hook, from=3-4, to=1-4]
	\arrow["{\mu _{b'}}", hook, from=4-1, to=1-2]
	\arrow[dashed, from=4-1, to=3-2]
	\arrow[hook, from=4-1, to=3-4]
\end{tikzcd}\]
(and it is enough to check $\leq $ for representable subobjects as the two homomorphisms in question are union-preserving).

It remains to prove $\nu _2 \circ \chi ^* tp_{Y\varphi } =tp_{\chi ^*Y\varphi }$. Similarly as before, if we precompose with a leg of the colimit cocone: $\chi ^*\widehat{d} \xrightarrow{\chi ^*(1_d\mapsto s)} \chi ^*Y\varphi x \xrightarrow{(\nu _2 \circ \chi ^*tp_{Y\varphi})} S_{\mathcal{C}}^{Sh(B)}x$ we get 
\[
\widehat{b}\xrightarrow{t}\chi ^*\widehat{d} \ \ \mapsto \   Sub_{\mathcal{C}}(x)\xrightarrow{tp_{{Y\varphi},x,d}(s)}Sub_{Sh(\mathcal{C}_{M,\beta })}(\widehat{d})\xrightarrow{\chi ^*}Sub_{Sh(B)}(\chi ^*\widehat{d})\xrightarrow{t^{-1}}Sub_{Sh(B)}(\widehat{b})
\]

Here $tp _{Y\varphi ,x,d}(s)$ takes $u\hookrightarrow x$ to the pullback of $Y\varphi u\hookrightarrow Y\varphi x$ along $s:\widehat{d}\to Y\varphi x$.

The restriction of $tp_{\chi ^*Y\varphi }$ takes $\widehat{b}\xrightarrow{t}\chi ^*\widehat{d}$ to the map which takes $u\hookrightarrow x$ to the pullback of $\chi ^*Y\varphi u\hookrightarrow \chi ^*Y\varphi x$ along $\widehat{b}\xrightarrow{t} \chi ^*\widehat{d}\xrightarrow{\chi ^*s}\chi ^*Y\varphi x$. 

It follows that the two $Sub_{\mathcal{C}}(x)\to Sub_{Sh(B)}(\widehat{b})$ homomorphisms are the same, as $\chi ^*$ preserves pullbacks. This is enough, as the legs of the colimit cocone are jointly epic.     
\end{proof}

So the question is, whether $Y:B\to Sh(B)$ extends along $\psi :B\to \mathcal{C}_{M,\beta }$ (up to $\leq $). At least it extends along $\psi : B\to Sub_{\mathcal{C}_{M,\beta }}(1)$ (up to equality):

\begin{proposition}
\label{extendsfromB}
   $\kappa ,\mathcal{C},B,M,\beta $ as before. We have a commutative triangle:
\[\begin{tikzcd}
	B && {Sh(B)} \\
	\\
	{Sub_{\mathcal{C}_{M,\beta }}(1)=\colim _{(u\hookrightarrow x,s)\in Diag(M,\beta )^{op} } Sub_{\mathcal{C}}(u)}
	\arrow["Y", from=1-1, to=1-3]
	\arrow["\psi"', from=1-1, to=3-1]
	\arrow["{\chi _0}"', from=3-1, to=1-3]
\end{tikzcd}\]    
where $\chi _0$ is $\kappa$-coherent.
\end{proposition}

\begin{proof}
    At $(u\hookrightarrow x,s)$ we define $\chi _0$ to be $Sub_{\mathcal{C}}(u)\hookrightarrow Sub_{\mathcal{C}}(x)\xrightarrow{\beta _{x,\top }(s)}Sh(B)$. The first map preserves everything except the top object, but since $\beta _{x,\top }(s)(u)=\widehat{\top }$ the composite is $\kappa$-coherent. We have to check that these maps are compatible and that the resulting homomorphism makes the triangle commute.

    Compatibility: Let $h:x\to y$ be a morphism $(u\hookrightarrow x,s)\to (v\hookrightarrow y,t)$ in $Diag(M,\beta )$ (that is, $\restr{h}{u}$ factors through $v$ and $Mh_{\top }(s)=t$). Write $h':v^*\to v$ for the pullback of $h$ along $v\hookrightarrow y$. We have a diagram
\[\begin{tikzcd}
	{Sub_{\mathcal{C}}(u)} & {Sub_{\mathcal{C}}(v^*)} && {Sub_{\mathcal{C}}(x)} \\
	&& {=} && {=} & {Sh(B)} \\
	& {Sub_{\mathcal{C}}(v)} && {Sub_{\mathcal{C}}(y)}
	\arrow[hook, from=1-1, to=1-2]
	\arrow[hook, from=1-2, to=1-4]
	\arrow["{\beta _{x,\top }(s)}", from=1-4, to=2-6]
	\arrow["{(\restr{h}{})^{-1}}", from=3-2, to=1-1]
	\arrow["{(h')^{-1}}"', from=3-2, to=1-2]
	\arrow[hook, from=3-2, to=3-4]
	\arrow["{h^{-1}}"', from=3-4, to=1-4]
	\arrow["{\beta _{y,\top }(t)}"', from=3-4, to=2-6]
\end{tikzcd}\]
where everything commutes except the triangle on the left. It suffices to prove that it gets equalized by the map $Sub_{\mathcal{C}}(v^*)\to Sh(B)$. But it is equalized by $-\cap u: Sub_{\mathcal{C}}(v^*)\to Sub_{\mathcal{C}}(u)$, and $Sub_{\mathcal{C}}(v^*)\to Sh(B)$ factors through $-\cap u$ (as for $w\leq v^*$ we have $\beta _{x,\top }(s)(w\cap u)=\beta _{x,\top }(s)(w)$).

Commutativity: $\psi (b)= (1\sqcup \emptyset \hookrightarrow 1\sqcup 1)\in Sub_{\mathcal{C}}(1\sqcup 1)^{(1\sqcup 1\hookrightarrow 1\sqcup 1,(\restr{*}{b},\restr{*}{\neg b}))}$, so it is taken to $\beta _{1\sqcup 1,\top }(\restr{*}{b},\restr{*}{\neg b})(1\sqcup \emptyset)\in Id_{\kappa }(B)$ by $\chi _0$. We need to show that it equals $\downarrow b$. The diagram 

\[\begin{tikzcd}
	{Id_{\kappa }(\downarrow b)} && {Id_{\kappa }(B)} && {Id_{\kappa }(\downarrow \neg b)} \\
	\\
	{Sub_{\mathcal{C}}(1\sqcup \emptyset )} && {Sub_{\mathcal{C}}(1\sqcup 1)} && {Sub_{\mathcal{C}}(\emptyset \sqcup 1)}
	\arrow[from=1-3, to=1-1]
	\arrow[from=1-3, to=1-5]
	\arrow["{\beta _{1,b}(\restr{*}{b})}", from=3-1, to=1-1]
	\arrow["{\beta _{1\sqcup 1,b}(\restr{*}{b},-)}"{description, pos=0.3}, color={rgb,255:red,128;green,128;blue,128}, from=3-3, to=1-1]
	\arrow["{\beta _{1\sqcup 1,\top}((\restr{*}{b},\restr{*}{\neg b}))}"{description, pos=0.6}, from=3-3, to=1-3]
	\arrow["{\beta _{1\sqcup 1,\neg b}(-,\restr{*}{\neg b})}"{description, pos=0.3}, color={rgb,255:red,128;green,128;blue,128}, from=3-3, to=1-5]
	\arrow[from=3-3, to=3-1]
	\arrow[from=3-3, to=3-5]
	\arrow["{\beta _{1,\neg b}(\restr{*}{\neg b})}"', from=3-5, to=1-5]
\end{tikzcd}\]
    commutes, as $\beta _{x,b}$ is natural both in $x$ and in $b$. Therefore $\beta _{1\sqcup 1,\top }((\restr{*}{b},\restr{*}{\neg b}))(1\sqcup \emptyset )$ equals $\downarrow b$ (since its intersection with $\downarrow b$ is $\downarrow b$, and its intersection with $\downarrow \neg b$ is $\emptyset $).

\end{proof}

\begin{usethmcounterof}{factorbeta}
    Let $\kappa $ be weakly compact, $\mathcal{C}$ a $\kappa $-coherent category with $\kappa $-small disjoint coproducts, $B$ a $\kappa $-coherent Boolean algebra, $M:\mathcal{C}\to Sh(B,\tau _{\kappa -coh})$ a $\kappa $-coherent functor and $\beta :M\Rightarrow S_{\mathcal{C}}^{Sh(B)}$ a natural transformation. Assume moreover, that $Sh(B,\tau _{\kappa -coh})$ can realize $\kappa $-types. 

    Then there is a $\kappa $-coherent functor $N:\mathcal{C}\to Sh(B,\tau _{\kappa -coh})$ together with a natural transformation $\alpha :M\Rightarrow N$, such that $\beta \leq tp_N\circ \alpha  $ (pointwise).

\end{usethmcounterof}

\begin{proof}
    By Theorem \ref{chiimpliesN} and Proposition \ref{extendsfromB}.
\end{proof}

\section*{Funding}

Supported by the Grant Agency of the Czech Republic (under the grant
22-02964S), and by Masaryk University (project MUNI/A/1457/2023).

\bibliographystyle{plain}
\bibliography{main.bib}

@article{classification,
  author =       "Jiří Adámek and Francis Borceux and Stephen Lack and Jiří Rosický",
  title =        "A classification of accessible categories",
  journal =      "Journal of Pure and Applied Algebra ",
  volume =       "175",
  pages =        "7-30",
  year =         "2002"
}

@misc{coh,
  author       = {Kristóf Kanalas},
  title        = {The (2,1)-category of small coherent categories},
  howpublished = {\url{https://arxiv.org/pdf/2104.13239.pdf}},
  year         = {2021},
}

@article{2filtered,
  author       = {Eduardo J. Dubuc and Ross Street},
  title        = {A construction of 2-filtered bicolimits of categories},
journal         ={Cahiers de topologie et géométrie différentielle catégoriques}, 
volume          ={47},
number           ={2},
  year         = {2006}
}

@article{colim,
  author =       "Robert Paré and Jiří Rosický",
  title =        "Colimits of accessible categories",
  journal =      "Mathematical Proceedings of the Cambridge Philosophical Society",
  volume =       "155",
  pages =        "47 - 50",
  year =         "2013"
}

@article{injdlat,
  author =       "Raymond Balbes",
  title =        "Projective and injective distributive lattices",
  journal =      "Pacific Journal of Mathematics",
  volume =       "21",
  pages =        "405 - 420",
  year =         "1967"
}

@article{GARNER2020102831,
title = {Ultrafilters, finite coproducts and locally connected classifying toposes},
journal = {Annals of Pure and Applied Logic},
volume = {171},
number = {10},
pages = {102831},
year = {2020},
doi = {10.1016/j.apal.2020.102831},
author = {Richard Garner}
}

@article{poizatyaacov,
  author =       "Itaï Ben Yaacov and Bruno Poizat",
  title =        "Fondements de la logique positive",
  journal =      "The Journal of Symbolic Logic",
  volume =       "72",
  pages =        "1141 - 1162",
  year =         "2007"
}

@article{barr,
  author =       "Michael Barr",
  title =        "Representation of categories",
  journal =      "Journal of Pure and Applied Algebra",
  volume =       "41",
  pages =        "113 - 137",
  year =         "1986"
}

@misc{interpret,
  author        = {Kristóf Kanalas},
  title         = {Positive model theory of interpretations},
  year          = {2022},
  note       = {arXiv:2211.14600}
  }

@misc{thesis,
  author        = {Kristóf Kanalas},
  title         = {Topos-valued infinitary first-order categorical logic},
  year          = {2025},
  note       = {PhD thesis}
  }

@article{TOMASIC2022108328,
title = {Difference Galois theory and dynamics},
journal = {Advances in Mathematics},
volume = {402},
pages = {108328},
year = {2022},
issn = {0001-8708},
doi = {https://doi.org/10.1016/j.aim.2022.108328},
author = {Ivan Tomašić and Michael Wibmer}
}

@article{presheaftype,
  author        = {Christian Espíndola and Kristóf Kanalas},
  title         = {Every theory is eventually of pre\-sheaf type},
  year          = {2025},
  journal       = {Theory and Applications of Categories},
  volume        = {44},
  number        = {12},
  pages         = {344-371}
  }

@misc{pure,
  author        = {Kristóf Kanalas},
  title         = {Pure maps are strict monomorphisms},
  year          = {2024},
  note       = "arXiv:2407.13448"
  }

@misc{exclosedkamsma,
  author        = {Mark Kamsma and Joshua Wrigley},
  title         = {Existentially closed models and locally zero-dimensional toposes},
  year          = {2024},
  note       = "arXiv:2406.02788"
  }

@article{bvalued,
  author        = {Kristóf Kanalas},
  title         = {${Sh(B)}$-valued models of $(\kappa ,\kappa )$-coherent categories},
  year          = {2025},
journal         = {Applied Categorical Structures},
volume          = {33},
number          = {12}
  }

@misc{spec,
  author =       "Jan Jurka and Tomáš Perutka and Lukáš Vokřínek",
  title =        "Constructing spectra using cone injectivity",
  year =        "2022",
  URL =         "https://arxiv.org/abs/2201.03516"
}

@book{makkai,
    author    = "Michel Makkai and Gonzalo E. Reyes",
    title     = "First Order Categorical Logic",
    year      = "1977",
    publisher = "Springer",
    address   = "Berlin, Heidelberg"
}

@book{halmos,
author = {Givant, Steven and Halmos, Paul},
year = {2009},
publisher = {Springer},
series = {Undergraduate Texts in Mathematics},
title = {Introduction to Boolean Algebras}
}

@book{elephant,
    author    = "Peter Johnstone",
    title     = "Sketches of an Elephant - A Topos Theory Compendium",
    year      = "2002",
    publisher = "Oxford University Press"
}

@misc{lurie,
  author        = {Jacob Lurie},
  title         = {Lecture notes in Categorical Logic},
  year          = {2018},
  URL       = "https://www.math.ias.edu/~lurie/278x.html"
  }

\end{document}